\documentclass[a4paper,10pt,twoside]{article}

\usepackage[T1]{fontenc}
\usepackage{amsmath, amssymb, amsthm}
\usepackage{amstext, amsfonts, a4}
\usepackage{latexsym}
\usepackage{mathrsfs}
\usepackage[all]{xy}

\DeclareMathOperator{\Ima}{Im}

\DeclareMathOperator{\Ext}{Ext^{1}}

\DeclareMathOperator{\red}{red}
\DeclareMathOperator{\sh}{sh}

\DeclareMathOperator{\fppf}{fppf}

\DeclareMathOperator{\Pic}{Pic}

\DeclareMathOperator{\Hilb}{Hilb}

\DeclareMathOperator{\Supp}{Supp}
\DeclareMathOperator{\lo}{length}

\DeclareMathOperator{\mo}{mod}

\DeclareMathOperator{\pgcd}{pgcd}

\DeclareMathOperator{\lan}{\langle}
\DeclareMathOperator{\ran}{\rangle}
\DeclareMathOperator{\uExt}{\underline{Ext}^{1}}
\DeclareMathOperator{\cO}{\mathcal{O}}
\DeclareMathOperator{\cL}{\mathcal{L}}

\DeclareMathOperator{\cG}{\mathcal{G}}

\DeclareMathOperator{\cE}{\mathcal{E}}

\DeclareMathOperator{\cP}{\mathcal{P}}

\DeclareMathOperator{\cS}{\mathcal{S}}

\DeclareMathOperator{\bbG}{\mathbb{G}}

\DeclareMathOperator{\bbZ}{\mathbb{Z}}
\DeclareMathOperator{\bbQ}{\mathbb{Q}}
\DeclareMathOperator{\bbR}{\mathbb{R}}

\DeclareMathOperator{\fm}{\mathfrak{m}}

\DeclareMathOperator{\fS}{\mathfrak{S}}

\DeclareMathOperator{\ra}{\rightarrow}

\DeclareMathOperator{\Div}{Div}

\DeclareMathOperator{\TS}{TS}

\def\div{\mathrm{div}}

\newtheorem{Th}{Theorem}[section]
\newtheorem{Cor}[Th]{Corollary}
\newtheorem{Prop}[Th]{Proposition}
\newtheorem{Lem}[Th]{Lemma}
\newtheorem{Rem}[Th]{Remark}

\newtheorem{Def}[Th]{Definition}

\newtheorem{Not}[Th]{Notation}

%opening
\title{\textbf{N\'eron's pairing \\ and relative algebraic equivalence}}
\author{C\'edric P\'epin}
\date{}

\begin{document}

\maketitle

\begin{abstract}
Let $R$ be a complete discrete valuation ring with algebraically closed residue field $k$ and fraction field $K$. Let $X_K$ be a projective smooth and geometrically connected scheme over $K$. N\'eron defined a canonical pairing on $X_K$ between $0$-cycles of degree zero and divisors which are algebraically equivalent to zero. When $X_K$ is an abelian variety, and if one restricts to those $0$-cycles supported by $K$-rational points, N\'eron gave an expression of his pairing involving intersection multiplicities on the N\'eron model $A$ of $A_K$ over $R$. When $X_K$ is a curve, Gross and Hriljac gave independantly an analo\-gous description of N\'eron's pairing, but for arbitrary  $0$-cycles of degree zero, by means of intersection theory on a proper flat regular $R$-model $X$ of $X_K$. 

In this article, we show that these intersection computations are valid for an arbitrary scheme $X_K$ as above and arbitrary $0$-cyles of degree zero, by using a proper flat \emph{normal and semi-factorial} model $X$ of $X_K$ over $R$. When $X_K=A_K$ is an abelian variety, and $X=\overline{A}$ is a semi-factorial compactification of its N\'eron model $A$, these computations can be used to study the algebraic equivalence on $\overline{A}$. We then obtain an interpretation of Grothentieck's duality for the N\'eron model $A$, in terms of the Picard functor of $\overline{A}$ over $R$.
\end{abstract}

\tableofcontents

\section{Introduction} \label{section0}
Let $R$ be a complete discrete valuation ring with algebraically closed residue field $k$ and fraction field $K$. Let $X_K$ be a projective smooth and geometrically connected scheme over $K$. Denote by $Z_{0}^0(X_K)$ the group of $0$-cycles of degree zero on $X_K$, and by $\Div^0(X_K)$ the group of divisors which are algebraically equivalent to zero on $X_K$. For each $c_K\in Z_{0}^0(X_K)$ and $D_{K}\in \Div^0(X_K)$ with disjoint supports, N\'eron attached canonically a rational number 
$$\lan c_K\ ,\ D_{K}\ran\in\bbQ,$$
by using the unique (up to constant) N\'eron function associated to $D_{K}$. This defines a bilinear pairing $\lan\ ,\ \ran$: see \cite{N}. 

Suppose first that $X_K=A_K$ is an abelian variety, and denote by $A$ its N\'eron model over $R$. By definition of $A$, any $K$-rational point of $A_K$ extends to a section of $A$ over $R$. Then, if $c_K$ is supported by $K$-rational points, N\'eron showed that the pairing attached to $A_K$ can be decomposed as follows:
$$\lan c_K\ ,\ D_{K}\ran=i(c_K\ ,\ D_{K})+j(c_K\ ,\ D_{K}),$$
where $i(c_K\ ,\ D_{K})$ is the intersection multiplicity $(\overline{c_K}.\overline{D_{K}})\in\bbZ$ of the schematic closures in $A$, and $j(c_K\ ,\ D_{K})\in\bbQ$ only depends on the specialization of $c_K$ on the group $\Phi_A$ of connected components of the special fiber $A_k$: see \cite{N} III 4.1 and \cite{La} 11.5.1.

Suppose now that $X_K$ is a curve, and denote by $X$ a proper flat regular model of $X_K$ over $R$. Let $M$ be the intersection matrix of the special fiber $X_k$ of $X/R$: if $\Gamma_1,\ldots,\Gamma_\nu$ are the irreducible components of $X_k$ equipped with their reduced scheme structure, the $(i,j)$\textsuperscript{th} entry of $M$ is the intersection number $(\Gamma_i\cdot\Gamma_j)$. Let $D_{K}\in \Div^0(X_K)$ and let $\overline{D_{K}}$ be its closure in $X$. Computing the degree $(\overline{D_{K}}.\Gamma_i)$ of $\overline{D_{K}}$ along each $\Gamma_i$, we get a vector $\rho(\overline{D_{K}})\in\bbZ^{\nu}$. Next, as a consequence of intersection theory on $X$, there exists a vector $V\in\bbQ^\nu$   such that $\rho(\overline{D_{K}})=MV$. Still denote by $V$ the $\bbQ$-linear combination of the $\Gamma_i$ where the coefficient of $\Gamma_i$ is the $i$\textsuperscript{th} entry of $V$. Then, for any $c_K\in Z_{0}^0(X_K)$ whose support is disjoint from the one of $D_{K}$, the following formula holds:
$$\lan c_K\ ,\ D_{K}\ran=(\overline{c_K}.\overline{D_{K}})+(\overline{c_K}.(-V)),$$
where the second intersection number is defined by $\bbQ$-linearity from the $(\overline{c_K}.\Gamma_i)$. See \cite{G},\cite{H} and \cite{La2} III 5.2. Now let $J_K$ be the Jacobian of $X_K$ and let $J$ be its N\'eron model over $R$. Following the point of view of Bosch and Lorenzini (\cite{BL} 4.3), it results from Raynaud's theory of the Picard functor $\Pic_{X/R}$ (\cite{R} 8) that the term $(\overline{c_K}.(-V))$ only depends on the specialization of $(c_K)\in J_K(K)$ into the group of components $\Phi_J$ of $J_k$.

In section \ref{section1}, we provide a unified approach to these both descriptions of N\'eron's pairing. More precisely, for an arbitrary proper geometrically normal and geome\-trically connected scheme $X_K$, there always exists some proper flat normal semi-factorial model $X$ of $X_K$ over $R$: see \cite{P}. Recall that $X/R$ is \emph{semi-factorial} if the restriction homomorphism on Picard groups $\Pic(X)\ra\Pic(X_K)$ is surjective. Note that a regular model is semi-factorial. Using the theory of the Picard functor of semi-factorial models, we define a pairing $[\ ,\ ]$ on $X_K$ involving intersection multiplicities on $X$ (Definition \ref{p}). It turns out that this pairing only depends on $X_K$, and in fact coincides with N\'eron's pairing when $X_K$ is projective smooth (Theorem \ref{comp}). If $X_K=A_K$ is an abelian variety and $X=\overline{A}$ is a semi-factorial compactification of its N\'eron model $A$, then we recover the above description on the regular open subset $A\subseteq\overline{A}$. If $X_K$ is a curve and $X$ a proper flat regular model of $X_K$, then the intersection matrix of $X_k$ is defined, and we exactly get Gross-Hriljac's formula.

In section \ref{section2}, we consider an abelian variety $A_K$, with dual $A_{K}'$. By definition, the abelian variety $A_{K}'$ parametrizes divisors on $A_K$ which are algebraically equivalent to zero, that is $A_{K}'=\Pic_{A_K/K}^0$. Then the Barsotti-Weil Theorem asserts that the forgetful map $\Ext(A_K,\bbG_{m,K})\ra A_{K}'$ is an isomorphism. Starting from this, Grothendieck conjectured the behavior of the duality at the level of N\'eron models as follows. Let $A'/R$ be the N\'eron model of $A_{K}'$. The Poincar\'e biextension of $A_K\times_K A_{K}'$ by $\bbG_{m,K}$ induces a canonical pairing between the component groups $\Phi_{A}$ and $\Phi_{A'}$ of the special fibers $A_k$ and $A_{k}'$, whith values in $\bbQ/\bbZ$. The conjecture is that this pairing is \emph{perfect}: see \cite{SGA7} IX 1.3. Equivalently (\cite{Bosch} 5.1), the Barsotti-Weil isomorphism extends over $R$ to an \emph{isomorphism} $\Ext(A,\bbG_{m,R})\ra (A')^0$, where the ${}^0$ stands for the identity component. This statement remains open in equal characteristic $p>0$; see however the introduction of \cite{BL} for a detailed list of the known cases, and also \cite{Loe}. 

Here, we give an equivalent form of this duality statement, in terms of \emph{algebraic equivalence on $\overline{A}$}  (Theorem \ref{tradconj}). It has to be seen as the extension over $R$ of the defining duality isomorphism $A_{K}'=\Pic_{A_K/K}^0$. To achieve this reformulation of Grothendieck's conjecture, two ingredients are needed. Firstly, the link between the duality pairing and N\'eron's pairing, established by Bosch and Lorenzini (\cite{BL} 4.4). Secondly, intersection theory on $\overline{A}$ for N\'eron's pairing on $A_K$. Here we crucially make use of this pairing for $0$-cycles of degree zero supported by \emph{nonrational} points (Proposition \ref{prop1}).

In section \ref{section3}, we improve slightly the computations of N\'eron's and Grothendieck's pairing for Jacobians worked out by Bosch and Lorenzini in \cite{BL} 4.6, and by Lorenzini in \cite{Lor} 3.4 (Proposition \ref{dGP}).

\paragraph{Acknoledgements.} I would like to thank Michel Raynaud for many enlightening discussions about N\'eron's and Grothendieck's pairings. I am also very grateful to Qing Liu for his valuable teaching of intersection theory on relative schemes.

\section{N\'eron's pairing and intersection multiplicities} \label{section1}
 
Let $R$ be a discrete valuation ring with fraction field $K$ and residue field $k$. \textbf{We assume $R$ complete and $k$ algebraically closed}.

For the definition and properties of N\'eron's pairing, we will refer to the Lang's book \cite{La}, especially to the Theorem 3.5 of chapter 11. A N\'eron's pairing over a complete field $L$ will always be computed using the \emph{normalized} discrete valuation on $L$, that is \emph{with value group $\bbZ$}.

Moreover, let us adopt the following terminology: a \emph{divisor} on a scheme will always be a \emph{Cartier divisor}.

\subsection{A canonical pairing computed on semi-factorial models} \label{SNMI}

Let $X_K$ be a proper geometrically normal and geometrically connected scheme over $K$. By \cite{P} 2.6, there exists a model  $X/R$ of $X_K$, that is an $R$-scheme with generic fiber $X_K$, which is proper flat normal and \emph{semi-factorial}: every invertible sheaf on $X_K$ can be extended to an invertible sheaf on $X$. To each $0$-cycle $c_K\in Z_{0}^0(X_K)$ and divisor $D_{K}\in \Div^0(X_K)$ whith support disjoint from the one of $c_K$, we will attach a number $[c_K\ ,\ D_{K}]_X\in\bbQ$ using intersection multiplicities on $X$. For this purpose, let us first recall some definitions and one result.

\paragraph{Intersection multiplicities.}
Let $X/R$ be a proper flat scheme over $R$. Let $c_K$ be a $0$-cycle on the generic fiber $X_K$, and denote by $\overline{c_K}$ its schematic closure in $X$. On the other hand, let $\Delta$ be a divisor on $X$ whose support does not meet that of $c_K$. The \emph{intersection multiplicity} $(\overline{c_K}.\Delta)$ of $\overline{c_K}$ and $\Delta$ on $X$ is defined as follows. Let $x_K$ be a point of the support of $c_K$. Let $Z$ be its schematic closure in $X$. This is an integral scheme, finite and flat over $R$, which is local because $R$ is henselian. Set $x_k:=Z\cap X_k$. If $f\in K(X)$ is a local equation for $\Delta$ in the neighborhood of $x_k$, then $(\overline{c_K}.\Delta)_{x_k}$ is the order of $f|_Z$ at $x_k$: writing $f|_Z=a/b$ with regular $a,b\in\cO(Z)$, then 
$$(\overline{c_K}.\Delta)_{x_k}=\lo_{\cO(Z)}\big(\cO(Z)/(a)\big)-\lo_{\cO(Z)}\big(\cO(Z)/(b)\big)$$
(\cite{F} page 8). The whole intersection multiplicity $(\overline{c_K}.\Delta)$ is defined by $\bbZ$-linearity. 

Let us also give another description of $(\overline{c_K}.\Delta)_{x_k}$, which will be useful in the sequel. As $R$ is excellent, the normalization $\widetilde{Z}\ra Z$ is finite. Moreover, as $k$ is algebraically closed, 
$$\lo_{\cO(Z)}\big(\cO(Z)/(a)\big)=\lo_{R}\big(\cO(Z)/(a)\big),$$
for any regular $a\in\cO(Z)$, and the same formula holds with $Z$ replaced by $\widetilde{Z}$ (\emph{loc. cit.} Appendix A.1.3). But 
$$\lo_{R}\big(\cO(Z)/(a)\big)=\lo_{R}\big(\cO(\widetilde{Z})/(a)\big)$$
for any regular $a\in\cO(Z)$ (see \cite{BLR}, end of page 237). Thus, if $f\in K(X)$ is a local equation for $\Delta$ in the neighborhood of $x_k$, we have obtained that
\begin{displaymath}
(\overline{c_K}.\Delta)_{x_k}= \left\{ \begin{array}{ll}
\lo_{\cO(\widetilde{Z})}\big(\cO(\widetilde{Z})/(f)\big) & \textrm{ if }f|_{\widetilde{Z}}\in\cO(\widetilde{Z}), \\
-\lo_{\cO(\widetilde{Z})}\big(\cO(\widetilde{Z})/(f^{-1})\big) & \textrm{ otherwise.} \end{array} \right.
\end{displaymath}

\paragraph{Algebraic equivalence and $\tau$-equivalence.}(\cite{R} 3.2 d) and \cite{SGA6} XIII 4) \label{tau}
If $G$ is a commuative group scheme locally of finite type over a field, the \emph{identity component} $G^0$ of $G$ is the open subscheme of $G$ whose underlying topological space is the connected component of the identity element of $G$. The \emph{$\tau$-component} of $G$ is open subgroup scheme $G^{\tau}$ of $G$ which is the inverse image of the torsion subgroup of $G/G^0$. When $G$ is a commutative group foncteur over a scheme $T$, whose fibers are representable by schemes locally of finite type, the \emph{identity component} (resp. \emph{$\tau$-component}) of $G$ is the subfunctor $G^{\tau}$ of $G$ whose fibers are the $G_{t}^0$, $t\in T$ (resp. $G_{t}^{\tau}$, $t\in T$). Note that $G^0\subseteq G^\tau$.

Let $Z\ra T$ be a proper morphism of schemes. Let $\cL$ be an invertible $\cO_Z$-module. The sheaf $\cL$ is said to be \emph{algebraically equivalent to zero} (resp. \emph{$\tau$-equivalent to zero}) if its image into $\Pic_{Z/T}(T)$ belongs to the subgroup $\Pic_{Z/T}^{0}(T)$ (resp. $\Pic_{Z/T}^{\tau}(T)$), that is $\cL_t\in\Pic_{X_t/t}^0(t)$ (resp. $\cL_t\in\Pic_{X_t/t}^{\tau}(t)$) for all $t\in T$. If $D$ is a divisor on $Z$, it is \emph{algebraically equivalent to zero} (resp. \emph{$\tau$-equivalent to zero}) if the associated invertible sheaf $\cO_{Z}(D)$ is. Denoting by $\Div^{\tau}(Z)$ the group of divisors on $Z$ which are $\tau$-equivalent to zero, we get $\Div^{0}(Z)\subseteq \Div^{\tau}(Z)$.

\paragraph{Algebraic equivalence and semi-factoriality.} (\cite{P} 3.11)
Let $X/R$ be a proper flat \emph{semi-factorial} $R$-scheme. Suppose that the generic fiber $X_K$ is geometrically normal and geometrically connected. Let $A/S$ be the N\'eron model of the Picard variety $\Pic_{X_K/K,\red}^0$ of $X_K$ and let $n$ be the exponent of the component group of the special fiber of $A/S$. Then, for any divisor $D_{K}$ on $X_K$ which is algebraically equivalent to zero, there exists a divisor $\Delta$ on $X$ which is algebraically equivalent to zero and whose generic fiber $\Delta_K$ is equal to $n D_{K}$.

\begin{Def} \label{p}
Let $X_K$ be a proper geometrically normal and geometrically connected scheme over $K$. Let $X/R$ be a proper flat normal and semi-factorial model of $X_K$ over $R$.

Consider $c_K\in Z_{0}^0(X_K)$ and $D_K\in\Div^{\tau}(X_K)$ with disjoint supports. Choose $(n,\Delta) \in(\bbZ\setminus\{0\})\times\Div^{\tau}(X)$ such that $\Delta_K=n D_K$. Denoting by $\overline{c_K}$ the schematic closure of $c_K$ in $X$, set
$$[c_K\ ,\ D_K]_X=\frac{1}{n}(\overline{c_K}.\Delta)\quad\in\bbQ.$$
\end{Def}
This definition makes sense because the rational number $(1/n)(\overline{c_K}.\Delta)$ does not depend on the choice of $(n,\Delta)$. Indeed, if $(n',\Delta')$ is another choice, the divisor $n'\Delta-n\Delta'$ is $\tau$-equivalent to zero on $X$ and equal to zero on $X_K$. Thus, as $X$ is normal, this difference is a rational multiple of  the principal divisor $X_k$ (\cite{R} 6.4.1 3)). Now note that $(\overline{c_K}.X_k)$ is equal to the degree of $c_K$, which is zero. 

Next, one checks easily that the symbol $[\ ,\ ]_X$ is \emph{bilinear} (in its definition domain). To prove that this pairing does not depend on the choice of $X$, we will use the following lemma, which is an immediate consequence of the projection formula in intersection theory (\cite{F} 2.3 (c)). 
\begin{Lem} \label{functoriality}
Let $\varphi:X\ra X'$ be a morphism of proper $S$-schemes, which are flat, normal and semi-factorial, with geometrically normal and geometrically connected generic fibers. Let $c_K\in Z_{0}^0(X_K)$, and $D_{K}'\in \Div^{\tau}(X_{K}')$ whose support does not meet the one of $(\varphi_{K})_* c_{K}$. The following equality holds:
$$[c_K\ ,\ (\varphi_{K})^*D_{K}']_X=[(\varphi_{K})_* c_{K}\ ,\ D_{K}']_{X'}.$$
\end{Lem}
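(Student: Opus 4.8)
The plan is to reduce everything to the definition of the pairing and the projection formula in intersection theory. Both sides are defined by: choose a nonzero integer $n$ and a divisor $\Delta'$ on $X'$, $\tau$-equivalent to zero, with $\Delta'_K = n D'_K$; then the right-hand side is $\frac{1}{n}(\overline{(\varphi_K)_* c_K}.\Delta')$ computed on $X'$. Since the value is independent of the choice of $(n,\Delta')$ (as recalled after Definition \ref{p}), I am free to fix one such pair and use it on both sides. The natural choice is to pull back: set $\Delta := \varphi^*\Delta'$. Because pullback of an invertible sheaf commutes with restriction to fibers and preserves the property of lying in $\Pic^\tau$ of each fiber (the $\tau$-component being stable under the functorial maps on Picard functors), $\Delta$ is $\tau$-equivalent to zero on $X$; and its generic fiber is $(\varphi_K)^*\Delta'_K = n\,(\varphi_K)^*D'_K$. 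Hence $(n,\Delta)$ is an admissible pair computing $[c_K, (\varphi_K)^*D'_K]_X$, which therefore equals $\frac{1}{n}(\overline{c_K}.\varphi^*\Delta')$ on $X$.

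It then remains to prove the integral identity
$$(\overline{c_K}.\varphi^*\Delta')_X = (\overline{(\varphi_K)_* c_K}.\Delta')_{X'}.$$
By $\bbZ$-linearity it suffices to treat a single closed point $x_K$ in the support of $c_K$, say with residue field $L/K$. Let $Z$ be its schematic closure in $X$, finite flat local over $R$; its image $Z' := \varphi(Z)$ with reduced-or-schematic structure is the closure in $X'$ of $y_K := \varphi_K(x_K)$. The key point is that $(\varphi_K)_*[x_K] = [L:K(y_K)]\,[y_K]$, so the right-hand contribution carries that multiplicity, while on the left the morphism $Z \to Z'$ is finite of generic degree $[L:K(y_K)]$. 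Working on the normalizations $\widetilde{Z}$, $\widetilde{Z'}$ — which, as recalled in the excerpt, compute the intersection multiplicities via lengths over $R$ of cokernels of local equations — the identity becomes the statement that for a finite dominant map $\widetilde{Z} \to \widetilde{Z'}$ of one-dimensional local normal domains finite flat over $R$, and $f \in K(X')$ a local equation of $\Delta'$ near $y_k$ (so $\varphi^* f$ is a local equation of $\varphi^*\Delta'$ near $x_k$), one has $\lo_R(\cO(\widetilde{Z})/(\varphi^*f)) = [\text{degree}]\cdot \lo_R(\cO(\widetilde{Z'})/(f))$ — which is exactly the content of the projection formula \cite{F} 2.3 (c) applied to the proper morphism $\varphi$, rephrased through the explicit description of the intersection numbers.

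In fact the cleanest route avoids hand computation entirely: view $(\overline{c_K}.\varphi^*\Delta')$ and $(\overline{(\varphi_K)_* c_K}.\Delta')$ as degrees of $0$-cycles obtained by intersecting a $1$-cycle (the closure $\overline{c_K}$, a finite sum of integral curves finite flat over $R$) with a Cartier divisor, then invoke the projection formula $\varphi_*(\varphi^*\Delta' \cdot \overline{c_K}) = \Delta' \cdot \varphi_*\overline{c_K}$ together with $\varphi_*\overline{c_K} = \overline{(\varphi_K)_* c_K}$ (both sides being the closure of the pushforward on the generic fibre, there being no vertical component since $\overline{c_K}$ is $R$-flat) and the fact that $\varphi_*$ preserves degrees of $0$-cycles supported in the special fibre. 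Dividing by $n$ finishes the proof. The main obstacle — really the only thing requiring care — is the bookkeeping of the finite-degree factor: one must make sure the multiplicity $[L:K(y_K)]$ appearing in $(\varphi_K)_* c_K$ on the generic fibre matches, via $R$-flatness, the generic degree of $Z \to Z'$ that the projection formula produces, and that no spurious contribution from components of $\varphi(\overline{c_K})$ contained in $X'_k$ arises; flatness of $\overline{c_K}$ over $R$ rules the latter out.
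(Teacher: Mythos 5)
Your argument is correct and is exactly the paper's (the paper dispatches this lemma in one clause as ``an immediate consequence of the projection formula in intersection theory, \cite{F} 2.3 (c)''): you pull back an admissible pair $(n,\Delta')$ to $X$, note that $\tau$-equivalence is preserved, identify $\varphi_*\overline{c_K}$ with $\overline{(\varphi_K)_*c_K}$ using $R$-flatness, and conclude by the projection formula. Your write-up simply supplies the bookkeeping that the paper leaves implicit.
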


\noindent So, in the situation of Definition \ref{p}, let $X'$ be another proper flat normal semi-factorial $R$-model of $X_K$. Consider the graph $\Gamma$ of the rational map $X\dashrightarrow X'$ induced by the identity on the generic fibers. This is a closed subscheme of $X\times_S X'$, proper and flat over $R$, with generic fiber isomorphic to $X_K$. Applying \cite{P} 2.6, we can find an $R$-scheme $\widetilde{X}$ which is proper flat normal and semi-factorial, together with an $R$-morphism $\widetilde{X}\ra \Gamma$ which is an isomorphism on generic fibers. Composing with the two projections from $X\times_S X'$ to $X$ and $X'$, we get arrows
\begin{eqnarray*}
\xymatrix{
& \widetilde{X} \ar[dr] \ar[dl] & \\
X & & X'
}
\end{eqnarray*}
which are isomorphisms on generic fibers. Now, the above lemma shows that the pairings $[\ ,\ ]_X$ and $[\ ,\ ]_{X'}$ both coincide with $[\ ,\ ]_{\widetilde{X}}$. In conclusion, the pairing $[\ ,\ ]_X$ only depends on $X_K$, and not on the choice of $X$. In the sequel, it will be denoted by $[\ ,\ ]$.

\subsection{Comparison with N\'eron's pairing}

Let $v$ the \emph{normalized} valuation on $K$, which maps any uniformizing element of $R$ to $1\in\bbZ$. We fix an algebraic closure $\overline{K}$ of $K$, and we still denote by $v$ the unique valuation on $\overline{K}$ extending $v$. N\'eron's pairing will always be computed with respect to $v$. 

Let us state the common generalization of \cite{N} III 4.1, Gross \cite{G}, Hriljac \cite{H}, \cite{La2} III 5.2 and Bosch-Lorenzini \cite{BL} 4.3, over a complete discrete valuation ring $R$ with algebraically closed residue field $k$ and fraction field $K$. Note that the group $(\bbR,+)$ being divisible, N\'eron's pairing is naturally defined for divisors which are only $\tau$-equivalent to zero. So its definition domain is the same as the one of $[\ ,\ ]$.

\begin{Th} \label{comp}
For every projective smooth and geometrically connected scheme over $K$, the pairing $[\ ,\ ]$ defined in subsection \ref{SNMI} coincide with N\'eron's pairing $\lan\ ,\ \ran$. In particular, the pairing $[\ ,\ ]$ extends N\'eron's pairing to every proper geometrically normal and geometrically connected scheme over $K$.
\end{Th}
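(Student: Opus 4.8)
The goal is to identify the pairing $[\ ,\ ]$ with N\'eron's pairing $\lan\ ,\ \ran$ when $X_K$ is projective smooth and geometrically connected. The key structural observation is that both pairings are bilinear on the same domain $Z_0^0(X_K) \times \Div^\tau(X_K)$ (with disjoint supports), so it suffices to compare them on a convenient set of generators, or better, to exhibit for a fixed $D_K$ a function on $X_K$ whose properties pin down both sides simultaneously. I would structure the proof around N\'eron's characterization of his pairing via N\'eron functions (Lang, \cite{La} ch.~11, Thm.~3.5): for $D_K \in \Div^\tau(X_K)$ there is a N\'eron function $\lambda_{D_K}$, unique up to an additive constant, and $\lan c_K, D_K\ran = \lambda_{D_K}(c_K) := \sum_P n_P \lambda_{D_K}(P)$ for $c_K = \sum n_P P$ of degree zero. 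So the task reduces to producing, out of the intersection-theoretic data on a semi-factorial model $X$, a function with the defining local properties of a N\'eron function associated to $D_K$.

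First I would reduce to the case where $D_K$ is already algebraically equivalent to zero and, using \cite{P} 3.11 as recalled in the excerpt, pass to an integer $n$ and a divisor $\Delta$ on $X$ with $\Delta$ algebraically (hence $\tau$-) equivalent to zero and $\Delta_K = n D_K$; by bilinearity it is enough to treat $[c_K, D_K] = \frac1n (\overline{c_K}.\Delta)$ against $\frac1n \lan c_K, n D_K\ran = \lan c_K, D_K\ran$. So WLOG $n = 1$: I want $(\overline{c_K}.\Delta) = \lan c_K, D_K\ran$ for $\Delta$ on $X$ with $\Delta_K = D_K$ and $\Delta$ algebraically equivalent to zero. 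Now I would define a candidate N\'eron function: for a closed point $P \in X_K$ with closure $Z = \overline{\{P\}}$ finite flat local over $R$ and normalization $\widetilde Z$, and a local equation $f$ for $\Delta$ near the closed point of $Z$, set $\lambda(P) := -v(f|_{\widetilde Z})$ suitably normalized — i.e.\ the intersection multiplicity $(\overline{c_K}.\Delta)$ is already, by the second description in the excerpt, $\sum_P \mathrm{length}$-terms which can be written as values of such a $v$-based function. The real content is to check that this function, extended to $X_K(\overline K)$ using the valuation $v$ on $\overline K$ and the ampleness of $\mathcal O_X(1)$ to control the boundary behavior, satisfies the three Weil/N\'eron axioms: it is continuous for the $v$-topology, its "divisor" in the appropriate sense is $D_K$, and it is bounded below by a function with logarithmic singularities exactly along $D_K$ — equivalently, that the quasi-function $P \mapsto (\overline P . \Delta)$ is a N\'eron quasi-function for $D_K$. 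Here I would invoke that on the projective smooth $X_K$, choosing $\Delta$ algebraically equivalent to zero on $X$ forces the "vertical correction" to be a multiple of $X_k$, which contributes zero against degree-zero cycles — this is exactly the independence-of-$(n,\Delta)$ argument from Definition \ref{p}, repackaged as: the local terms are independent of choices up to something killed by $\deg c_K = 0$.

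The cleanest route, and the one I would actually pursue, is to verify the characterizing properties of N\'eron's pairing listed in Lang \cite{La} ch.~11 Thm.~3.5 directly for $[\ ,\ ]$: (i) bilinearity — already established in the excerpt; (ii) symmetry/compatibility under the relevant functorialities — via Lemma \ref{functoriality}; (iii) the local decomposition and the normalization on $\mathbb P^1$ (or on an auxiliary abelian variety / Jacobian), which fixes the pairing uniquely. In particular I would compare with the two already-known cases cited in the introduction: for $X_K = A_K$ an abelian variety one takes $X = \overline A$ a semi-factorial compactification of the N\'eron model and matches $(\overline{c_K}.\Delta)$ on the regular locus $A \subseteq \overline A$ with N\'eron's $i + j$ decomposition (\cite{N} III 4.1, \cite{La} 11.5.1), using that the $j$-term is exactly the rational correction coming from moving $\Delta$ to be algebraically equivalent to zero; and for $X_K$ a curve one matches with Gross--Hriljac (\cite{G}, \cite{H}, \cite{La2} III 5.2) where $X$ regular makes the intersection matrix $M$ available and $-V$ is the same rational vertical divisor. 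Then a general projective smooth $X_K$ is handled by embedding into projective space and pulling back from, or pushing forward to, these known cases via Lemma \ref{functoriality}, together with the fact that $\Div^0(X_K)$ and $Z_0^0(X_K)$ are generated by classes that can be compared on curves (Bertini-type linear sections) or on the Albanese/Jacobian.

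\textbf{Main obstacle.} The hard part is not the formal bilinearity or the functoriality lemma, but establishing that the intersection-theoretic quasi-function genuinely is a N\'eron quasi-function — i.e.\ controlling its behavior at the archimedean-type "boundary" of the local patch, which here means near the closed fiber $X_k$, and proving the boundedness/continuity axiom uniformly in $P$. This is where semi-factoriality and normality of $X$ are essential (to make $\mathcal O_X(\Delta)$ a genuine invertible sheaf with the right restriction, and to force the ambiguity to be a $\mathbb Q$-multiple of the principal divisor $X_k$), and where one must be careful that passing to the normalization $\widetilde Z$ of the closure of a non-rational point does not distort lengths — precisely the point for which the excerpt already set up the length identity $\mathrm{length}_R(\mathcal O(Z)/(a)) = \mathrm{length}_R(\mathcal O(\widetilde Z)/(a))$. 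I expect the bulk of the argument to consist of checking these axioms against Lang's definition and then invoking uniqueness, rather than any new geometric input beyond \cite{P} 2.6 and 3.11.
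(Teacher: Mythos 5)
Your skeleton matches the paper's in outline: both reduce the principal-divisor case to a local length computation on the normalized closure $\widetilde{Z}$ of a closed point, both use bilinearity of $[\ ,\ ]$ together with Lemma \ref{functoriality} and the graph construction for functoriality, and both then appeal to a uniqueness statement for N\'eron's pairing. The gap is that the decisive step --- what you yourself call the ``main obstacle'' --- is left unresolved, and the resolution you sketch is not the one that works. The paper does \emph{not} prove that $z\mapsto v(g(z))$ (for $g$ a local equation of $\Delta$) is a N\'eron function for $nD_K$, and it verifies no continuity or quasi-function axiom. It instead invokes Lang's characterization (Lemma \ref{carac} $=$ \cite{La} 11.3.2): a bilinear pairing $\delta$ that vanishes on principal divisors, is functorial, and is \emph{bounded as $c_K$ ranges over cycles of bounded positive degree} must vanish identically. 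Applied to $\delta=\lan\ ,\ \ran-[\ ,\ ]$, only the boundedness condition requires work, and it is far weaker than the N\'eron-function axioms: one represents $\Delta$ by local equations $g_t$ on affines $U_t$, writes $\lambda_{nD_K}(z)=v(g_t(z))+\alpha_t(z)$ with $\alpha_t$ locally bounded, notes that by properness every $\overline{K}$-point of $X_K$ extends to an $\overline{R}$-point of some $U_t$ and hence lies in a bounded set $E_t$ on which $\alpha_t$ is bounded, and deduces $|\delta(c_K,D_K)|\le (2B/|n|)\deg^{+}c_K$. Without this (or an equivalent) estimate your argument does not close; verifying the full N\'eron-function axioms for the intersection quasi-function is a strictly harder and unexecuted task.

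Your fallback routes also fail as stated. A general projective smooth $X_K$ does not embed into an abelian variety, so one cannot reduce to N\'eron's $i+j$ decomposition; and reducing to curves by Bertini sections runs into the difficulty, flagged in the paper's remark after Proposition \ref{equivrat}, that auxiliary curves need not be geometrically normal and geometrically connected over $K$, so that N\'eron's pairing need not even be defined on them. Finally, ``WLOG $n=1$'' is not available: the integer $n$ provided by semi-factoriality is governed by the exponent of the component group of the N\'eron model of $\Pic_{X_K/K,\red}^{0}$ and must be carried through the estimate, as in the displayed bound above.
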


Before proving the theorem, let us note the following consequence.
\begin{Cor} \label{denom}
Let $X_K$ be a proper geometrically normal and geometrically connected scheme over $K$. Let $n$ be the exponent of the component group of the special fiber of the N\'eron model of the Picard variety $\Pic_{X_K/K,\red}^0$. Then N\'eron's pairing attached to $X_K$ takes values in $\frac{1}{n}\bbZ$.
\end{Cor}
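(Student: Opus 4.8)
The plan is to deduce the corollary directly from Definition~\ref{p}, the semi-factoriality property recalled above (\cite{P} 3.11), and Theorem~\ref{comp}, the last of these serving only to identify the object in question: for $X_K$ proper geometrically normal and geometrically connected, ``N\'eron's pairing attached to $X_K$'' is, by Theorem~\ref{comp}, the pairing $[\ ,\ ]$. So I would fix once and for all a proper flat normal semi-factorial model $X/R$ of $X_K$ (it exists by \cite{P} 2.6), and reduce to showing that $[c_K,D_K]_X\in\frac1n\bbZ$ for every $c_K\in Z_0^0(X_K)$ and every $D_K\in\Div^{\tau}(X_K)$ with disjoint supports.

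The crux is to choose well the pair $(n,\Delta)$ entering Definition~\ref{p}, namely one whose first component is exactly the exponent $n$ of the component group of the special fiber of the N\'eron model of $\Pic_{X_K/K,\red}^0$. For $D_K$ \emph{algebraically} equivalent to zero this is immediate from the statement recalled under ``Algebraic equivalence and semi-factoriality'' (\cite{P} 3.11), which furnishes a divisor $\Delta\in\Div^{0}(X)\subseteq\Div^{\tau}(X)$ with $\Delta_K=nD_K$. The intersection number $(\overline{c_K}.\Delta)$ is then an integer, being by construction a $\bbZ$-linear combination of the local terms $(\overline{c_K}.\Delta)_{x_k}$, each of which is a difference of lengths of $R$-modules. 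Hence $[c_K,D_K]_X=\frac1n(\overline{c_K}.\Delta)\in\frac1n\bbZ$, a value independent of the chosen pair by the discussion following Definition~\ref{p}. This settles the corollary when $D_K$ is algebraically equivalent to zero.

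It remains to cover the whole definition domain $\Div^{\tau}(X_K)$ \emph{with the same denominator $n$}, and I expect this to be the main obstacle. Here I would invoke the $\tau$-analogue of \cite{P} 3.11: for the same $n$ as above, every $D_K\in\Div^{\tau}(X_K)$ admits some $\Delta\in\Div^{\tau}(X)$ with $\Delta_K=nD_K$ --- the existence of \emph{some} nonzero multiple of $D_K$ extending to a $\tau$-equivalent divisor on $X$ is already implicit in Definition~\ref{p}, so what genuinely has to be checked is that the sharp multiple $n$ suffices. Concretely, one must see that the obstruction to extending a $\tau$-equivalence class from $X_K$ to a $\tau$-equivalent divisor on $X$ is still controlled by the component group $\Phi$, which $n$ annihilates, and that no extra factor coming from the torsion of $\NS(X_k)$ creeps into the numerator. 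Granting this, the computation of the second paragraph applies verbatim and yields $[c_K,D_K]_X=\frac1n(\overline{c_K}.\Delta)\in\frac1n\bbZ$, which completes the proof.
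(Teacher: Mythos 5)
Your first two paragraphs are exactly the intended argument, and they already prove the corollary: by Theorem \ref{comp} the pairing in question is $[\ ,\ ]$, and for $D_K$ algebraically equivalent to zero the semi-factoriality statement \cite{P} 3.11 supplies $\Delta\in\Div^{0}(X)$ with $\Delta_K=nD_K$ for precisely the exponent $n$, so $[c_K,D_K]=\frac{1}{n}(\overline{c_K}.\Delta)\in\frac{1}{n}\bbZ$ since the intersection multiplicity is an integer; this is independent of the choice of $(n,\Delta)$ by the discussion after Definition \ref{p}.

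The third paragraph, however, addresses a stronger claim than the one stated, and the step you propose there is not available. The corollary is the refinement of \cite{N} III 4.2, i.e.\ it concerns N\'eron's pairing on its classical domain $Z_{0}^{0}(X_K)\times\Div^{0}(X_K)$; the earlier remark about $\tau$-equivalence only serves to make the \emph{domains} of $\lan\ ,\ \ran$ and $[\ ,\ ]$ agree for the comparison theorem, not to assert the $\frac{1}{n}\bbZ$ bound on all of $\Div^{\tau}(X_K)$. And indeed the ``$\tau$-analogue of \cite{P} 3.11 with the same $n$'' that you would need cannot be extracted from the cited results: for $D_K$ merely $\tau$-equivalent to zero one first passes to a multiple $mD_K$ that is algebraically equivalent to zero (with $m$ killing the torsion class in the N\'eron--Severi group of $X_K$) and then applies \cite{P} 3.11, which only yields an extension of $nmD_K$ and hence a value in $\frac{1}{nm}\bbZ$; no control of this extra factor $m$ is claimed in the paper, and none is needed for the corollary as stated. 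So you should simply delete the last paragraph (or state explicitly that the assertion is for divisors algebraically equivalent to zero); with that, your proof is complete and coincides with the paper's.
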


\noindent This provides a refinement of \cite{N} III 4.2, where the integer $n$ is replaced by some multiple which is nontrivial if $X_K(K)$ is empty or if the abelian variety $\Pic_{X_K/K,\red}^0$ is not principally polarized. In \cite{MT} (1.5) and (2.3), or \cite{La} 11.5.1, this is already proved in the case where $X_K$ is an abelian variety and the $0$-cycles are supported by rational points. In this context, this is also a consequence of \cite{BL} 4.4. On the other hand, the latter shows that N\'eron's pairing can take the value $1/n$, for example when $X_K$ is an elliptic curve (see \cite{BL} Example 5.8).

Let us go back to Theorem \ref{comp}. To prove that both pairings coincide, it is enough to check it for divisors which are algebraically equivalent to zero. Then, to make the desired comparaison, we will use the caracterisation of N\'eron's pairing given in \cite{La} 11.3.2 and recalled below. 

An element $c_K$ of $Z_{0}^0(X_K)$ can be written uniquely as a difference of two positive $0$-cycles with disjoint supports: $c_K=c_{K}^{+}-c_{K}^{-}$. Denoting by $\deg$ the degree of a $0$-cycle, let us set $$\deg^{+}c_{K}:=\deg(c_{K}^{+})=\deg(c_{K}^{-})\geq 0.$$ This integer is called the \emph{positive degree} of $c_K$.

\begin{Lem}\emph{\textbf{(\cite{La} 11.3.2})} \label{carac}
Suppose that for each projective smooth and geometrically connected scheme $X_K$ over $K$, we are given a bilinear pairing
\begin{eqnarray*}
Z_{0}^0(X_K)\times\Div^0(X_K)  & \ra & \bbR\\
(c_K\ ,\ D_K) & \mapsto & \delta(c_K\ ,\ D_K)
\end{eqnarray*}
such that the following properties are true:
\begin{enumerate}
\item If $D_K$ is a principal divisor on $X_K$, then $\delta(c_K\ ,\ D_K)=0$.
\item Let $\varphi_K:X_{K}\ra X_{K}'$ be a $K$-morphism. For all $c_{K}\in Z_{0}^0(X_{K})$, and for all $D_{K}'\in\Div^0(X_{K}')$ whose support does not meet that of the $0$-cycle $(\varphi_K)_* c_{K}$, the following equality holds
$$\delta(c_K\ ,\ (\varphi_K)^* D_{K}')=\delta((\varphi_K)_* c_{K}\ ,\ D_{K}').$$
\item Fix $D_K\in\Div^0(X_K)$. As $c_K$ varies in $Z_{0}^0(X_K)$, with bounded positive degree, the values $\delta(c_K\ ,\ D_K)$ are bounded.
\end{enumerate}
Then $\delta(c_K\ , \ D_K)=0$ for all $c_K$, $D_K$ and $X_K$.
\end{Lem}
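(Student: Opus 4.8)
The plan is to use the functoriality hypotheses (1) and (2) to reduce the assertion to the case where $X_K$ is an abelian variety, and then to kill that case by playing the multiplication-by-$N$ maps against the boundedness hypothesis (3). Throughout I would freely use the following consequence of (1) and bilinearity: since $X_K$ is projective smooth, any divisor class in $\Pic^{0}(X_K)$ admits a representative whose support avoids a prescribed finite set of closed points, so $\delta(c_K,D_K)$ is unchanged if $D_K$ is replaced by a linearly equivalent divisor still having support disjoint from $\Supp(c_K)$ (the difference is principal with support contained in the union of the two supports, so (1) applies). This lets me impose finitely many extra avoidance conditions on $\Supp(D_K)$ whenever convenient.

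\emph{Reduction to an abelian variety.} Let $a\colon X_K\to A:=\mathrm{Alb}(X_K)$ be an Albanese morphism (when $X_K(K)=\emptyset$ one first acquires a rational point after a finite separable base change and descends; this standard reduction is not the essential point and I would only sketch it). The pullback $a^{*}\colon\Pic^{0}(A)\to\Pic^{0}(X_K)$ is an isomorphism, so there is $D'\in\Div^{0}(A)$, which by the moving lemma on $A$ we may choose with support disjoint from $a(\Supp c_K)$, such that $a^{*}D'$ is linearly equivalent to $D_K$. By the remark above, $\delta(c_K,D_K)=\delta_{X_K}(c_K,a^{*}D')$, and hypothesis (2) applied to $a$ gives $\delta_{X_K}(c_K,a^{*}D')=\delta_{A}(a_{*}c_K,D')$, with $a_{*}c_K\in Z_{0}^{0}(A)$ of positive degree at most $\deg^{+}c_K$. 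Since $\delta_{A}$ is again a pairing of the kind considered in (1)--(3), it suffices to treat $X_K=A$ an abelian variety.

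\emph{The abelian variety case.} Let $A/K$ be abelian, $c\in Z_{0}^{0}(A)$ and $D\in\Div^{0}(A)$ with disjoint supports; using the remark, arrange also that $O\notin\Supp(D)$, where $O$ is the origin. For $[N]\colon A\to A$ one has $[N]^{*}\mathcal{O}(D)\cong\mathcal{O}(D)^{\otimes N}$ in $\Pic^{0}(A)$, so $[N]^{*}D$ is linearly equivalent to $ND$; hence, whenever the relevant supports are disjoint, (1), (2) and bilinearity yield
$$\delta([N]_{*}c\,,\,D)=\delta(c\,,\,[N]^{*}D)=\delta(c\,,\,ND)=N\,\delta(c\,,\,D).$$
Write $c=c_{\mathrm{free}}+c_{\mathrm{tor}}$, where $c_{\mathrm{free}}$ collects the closed points of $\Supp(c)$ that are non-torsion over $\overline{K}$ and $c_{\mathrm{tor}}$ the torsion ones, and put $e:=-\deg c_{\mathrm{free}}=\deg c_{\mathrm{tor}}$, $c':=c_{\mathrm{free}}+e(O)$, $c'':=c_{\mathrm{tor}}-e(O)$; these lie in $Z_{0}^{0}(A)$ and have support disjoint from $\Supp(D)$. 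If $N$ is the least common multiple of the orders of the torsion points involved, then $[N]_{*}c''$ is supported at $O$ of total degree $\deg(c'')=0$, hence is the zero cycle, and the displayed identity gives $N\,\delta(c'',D)=0$. For $c'$: a non-torsion closed point $x$ has infinite order over $\overline{K}$, so for each closed point $s$ the equation $[N](x)=s$ has only finitely many solutions $N$ (otherwise the points $N\bar{x}$ would all lie in the finite Galois orbit of a geometric point above $s$); combined with $[N](O)=O\notin\Supp(D)$, this shows that for all but finitely many $N$ the support of $[N]_{*}c'$ is disjoint from $\Supp(D)$, while $\deg^{+}([N]_{*}c')\le\deg^{+}(c')$ remains bounded. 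Hypothesis (3), applied to the fixed divisor $D$, then bounds $\{N\,\delta(c',D)\}$ over these infinitely many $N$, forcing $\delta(c',D)=0$. Therefore $\delta(c,D)=\delta(c',D)+\delta(c'',D)=0$.

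\emph{The main obstacle.} The conceptual crux is the identity $\delta([N]_{*}c,D)=N\,\delta(c,D)$, which expresses that $c\mapsto\delta(c,D)$ is at once $\bbZ$-linear and bounded on cycles of bounded positive degree — impossible unless it vanishes; recognising that $[N]^{*}$ acts by $N$ on $\Div^{0}$ is the one genuine idea. The remaining work is bookkeeping: keeping all supports disjoint throughout (handled by the moving-lemma reduction) and separating the torsion contribution, for which the multiplication trick degenerates and one must argue by hand; and the reduction to an abelian variety in the absence of a rational point, which I would relegate to a remark.
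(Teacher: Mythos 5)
The paper itself does not reprove this lemma — it is quoted from Lang (\cite{La} 11.3.2) and used as a black box — so your attempt is to be measured against that classical uniqueness argument. Your central mechanism is indeed the right (and the classical) one: for $D\in\Div^0(A)$ on an abelian variety one has $[N]^*D\sim ND$, so hypotheses (1), (2) and bilinearity give $\delta([N]_*c,D)=N\,\delta(c,D)$, which is incompatible with the boundedness hypothesis (3) unless $\delta(c,D)=0$. Your treatment of the abelian-variety case is essentially correct, including the point most people miss: the support condition in (2) concerns $\Supp([N]_*c)$, and your splitting of $c$ into torsion and non-torsion closed points (with $[N]_*c''=0$ for $N$ the lcm of the orders, and disjointness of $\Supp([N]_*c')$ from $\Supp(D)$ for all but finitely many $N$) handles it properly.

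The genuine gap is in the reduction to an abelian variety. The hypotheses furnish a pairing $\delta$ only for schemes over the \emph{fixed} field $K$; nothing is given over a finite extension $L/K$, and there is no descent statement among (1)--(3). So the parenthetical ``acquire a rational point after a finite separable base change and descend'' is not an argument: after base change there is no pairing $\delta_L$ to compare with, and knowing a statement over $L$ for some other pairing says nothing about $\delta$ over $K$. Moreover the case $X_K(K)=\emptyset$ is not a marginal one that can be relegated to a remark in this setting: $K$ is the fraction field of a complete discrete valuation ring, and the paper applies the lemma precisely in situations where rational points are unavailable (curves of index $d>1$, $0$-cycles supported by nonrational points). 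Without a rational point the canonical $K$-morphism goes to the Albanese \emph{torsor}, on which multiplication by $N$ does not exist, so the reduction must be reworked (for instance via a $K$-morphism from the torsor to $\mathrm{Alb}(X_K)$ built from the period, at the cost of an extra integer factor absorbed by the same boundedness trick) — this is exactly the part that needs to be written, not waved at. A secondary point in the same step: even when $X_K(K)\neq\emptyset$, the claim that every $D_K\in\Div^0(X_K)$ is, up to linear equivalence \emph{over $K$}, of the form $a^*D'$ with $D'\in\Div^0(\mathrm{Alb}(X_K))$ is immediate only over $\overline{K}$; over $K$ itself, which may be imperfect of characteristic $p$, you need to descend the class $D'$ (the separable part follows from its uniqueness, the inseparable part requires an extra argument, and $\Br(K)=0$ is what lets you represent the descended class by an actual $K$-divisor). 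As written, your proof establishes the lemma only for those $X_K$ admitting a $K$-rational point, modulo that descent point.
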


\begin{proof}[Proof of Theorem \ref{comp}]
Starting from the existence of \emph{N\'eron functions} (e.g. see \cite{La} Chapter 11), let us recall the definition of N\'eron's pairing. Let 
$$c_K=\sum_i n_i [x_{K,i}]\in Z_{0}^0(X_K)$$ 
and $D_K\in\Div^0(X_K)$ whose support $\Supp(D_K)$ does not contain any of the $x_{K,i}$. Let $\lambda_{D_K}:(X_K-\Supp(D_K))(\overline{K})\ra\bbR$ be a N\'eron function associated to $D_K$. For each $i$, the scheme $x_{K,i}\otimes_K\overline{K}$ is supported by some $\overline{K}$-points $x_{\overline{K},j_i}$, $j_i=1,\ldots,s_i$, where $s_i$ is the separable degree of $K(x_{K,i})/K$. Denoting by $l_{i}$ the inseparable degree of $K(x_{K,i})/K$, then
$$\lambda_{D_K}(x_{K,i}):=\sum_{j_i=1}^{s_i} l_{i} \lambda_{D_K}(x_{\overline{K},j_i})\quad\textrm{and}\quad\lan c_K\ ,\ D_K\ran:=\sum_{i}n_i \lambda_{D_K}(x_{K,i}).$$
The real number $\lan c_K\ ,\ D_K\ran$ is well-defined because $\lambda_{D_K}$ is unique up to constant and $c_K$ has degree zero. It follows from \cite{N} III 4.2 that this number is \emph{rational}. However, we will not need this fact, and this will be a consequence of the theorem (see Corollary \ref{denom} above).
 
\underline{Comparison of the pairings for a principal divisor $D_K$.}

Let us keep the previous notation, and suppose that $D_K=\div_{X_K}f$ for a nonzero $f\in K(X_K)$. Let $z\in (X_{K}-\Supp(\div_{X_K}f))(\overline{K})$, mapping to a closed point $x_K\in X_K$. The evaluation of $f$ at $z$ is defined by the pull-back $z^*:\cO_{X_K,x_K}\ra\overline{K}$, that is, $f(z):=z^*f$. The formula $\lambda_f(z)=v(f(z))$ then defines a N\'eron function for the divisor $\div_{X_K} f$.
 
Fix an $i$. There is a $1$-$1$ correspondance between the $x_{\overline{K},j_i}$ and the $K$-embeddings of the residue field extension $K(x_{K,i})/K$ into $\overline{K}/K$. By pulling-back the valuation $v$, each of these embeddings induces a valuation on $K(x_{K,i})$. However, as $R$ is complete, these valuations are equal to the unique valuation on $K(x_{K,i})$ which extends the normalized valuation on $K$, and that we can also denote by $v$. Consequently, 
$$\lambda_f(x_{K,i})=\sum_{j_i=1}^{s_i} l_{i} v(f(x_{K,i}))=[K(x_{K,i}):K] v(f(x_{K,i}))$$ 
where $f(x_{K,i})$ is the image of $f$ by the canonical surjection $\cO_{X_K,x_{K,i}}\ra K(x_{K,i})$.

Now, take the schematic closure $Z_i$ of $x_{K,i}$ in $X$, denote by $\widetilde{Z_i}$ its normalization and set $x_{k,i}=X_k\cap Z_i$. The ring $\cO(\widetilde{Z_i})$ is a discrete valuation ring with fraction field $K(x_{K,i})$. So it is precisely the valuation ring of $v$ in $K(x_{K,i})$. As $k$ is algebraically closed, its ramification index over $R$ is equal to $[K(x_{K,i}):K]$. From this observation, we get
\begin{displaymath}
v(f(x_{K,i}))=\left\{ \begin{array}{ll}
1/[K(x_{K,i}):K] \lo_{\cO(\widetilde{Z_i})}\big(\cO(\widetilde{Z_i})/(f)\big) & \textrm{ if }f|_{\widetilde{Z_i}}\in\cO(\widetilde{Z_i}), \\
-1/[K(x_{K,i}):K] \lo_{\cO(\widetilde{Z_i})}\big(\cO(\widetilde{Z_i})/(f^{-1})\big) & \textrm{ otherwise.} \end{array} \right.
\end{displaymath}
We have thus obtained $[K(x_{K,i}):K]  v(f(x_{K,i}))=(\overline{c_K}.\div_X f)_{x_{k,i}}$ (recall the beginning of subsection \ref{SNMI}). But $\div_X f$ is a divisor on $X$ which is $\tau$-equivalent to zero and extends $\div_{X_K}f$. The desired equality $\lan c_K\ ,\ \div_{X_K}f \ran=[ c_K\ ,\ \div_{X_K}f]$ follows.

\underline{The pairing $\delta(\ ,\ )$.}

Both $[\ ,\ ]$ and $\lan\ ,\ \ran$ are bilinear in their definition domain, and they coincide for principal divisors. Using a moving lemma on the projective smooth scheme $X_K$ (e.g. \cite{Li} 9.1.11), we see that
$$\delta(c_K\ ,\ D_K):=\lan c_K\ ,\ D_K\ran - [c_K\ ,\ D_K]$$
is well-defined on the \emph{whole} product $Z_{0}^0(X_K)\times \Div^0(X_K)$. Condition $1$ of Lemma \ref{carac} is satisfied by $\delta$.

\underline{Condition $2$ of \ref{carac} is satisfied by $\delta(\ ,\ )$.}

We know that N\'eron's pairing enjoys the functoriality described in \ref{carac} $2$. So we have to see that the pairing $[\ ,\ ]$ has the same property. Let $X/R$ (resp. $X'/R$) be a proper flat normal semi-factorial model of $X_K$ (resp. $X_{K}'$). Consider the graph $\Gamma$ of the rational map $X\dashrightarrow X'$ defined by $\varphi_K$. Applying \cite{P} 2.6 to $\Gamma$, we obtain a proper flat normal semi-factorial $\widetilde{X}/R$ and $R$-morphisms
\begin{eqnarray*}
\xymatrix{
& \widetilde{X} \ar[dl]_{\alpha} \ar[dr]^{\beta} & \\
X & & X'
}
\end{eqnarray*}
such that on the generic fibers, $\alpha$ is an isomorphism and $\beta$ coincide with $\varphi_K$. In particular, the pairing $[\ ,\ ]$ for $X_K$ can be computed on $\widetilde{X}$, and the desired functoriality follows from Lemma \ref{functoriality} applied to $\beta$.

\underline{Condition $3$ of \ref{carac} is satisfied by $\delta(\ ,\ )$.}

Denote by $\overline{R}$ the valuation ring of $v$ in $\overline{K}$.

Fix $D_K\in\Div^0(X_K)$. Let $(n,\Delta)\in(\bbZ\setminus\{0\})\times\Div^{\tau}(X_K)$ satisfying $\Delta_K=n D_K$. Represent the divisor $\Delta$ by a family $(U_t,g_t)_{t=1,\ldots,m}$, where the $U_t$ are affine open subsets of $X$ and the $g_t$ are rational functions on $X$. Let $E_t$ be the set of $\overline{K}$-points of $X_K$ which extend to $\overline{R}$-points of $U_t$. As $X$ is proper over $R$, we see that $X(\overline{K})=\cup_{t=1}^m E_t$. The family $(U_{t,K},g_t)_{t=1,\ldots,m}$ represents the divisor $n D_K$ on $X_K$. Let us choose a N\'eron function $\lambda_{n D_K}$ on $X_K$. By definition, we can find some $v$-continuous locally bounded functions $\alpha_t:U_{t,K}(\overline{K})\ra\bbR$ such that
$$\lambda_{n D_K}(z)= v(g_t(z)) + \alpha_t(z)$$
for all $z\in (U_{t,K}-\Supp(D_K))(\overline{K})$. As $E_t$ is bounded in $U_t(\overline{K})$ (by construction), the function $\alpha_t$ is bounded on $E_t$.

Let $c_K=\sum_i n_i[x_{K,i}]\in Z_{0}^0(X_K)$ whose support does not meet that of $D_K$. Let $x_{K,i}$ be a closed point in the support of $c_K$, $Z_i$ its schematic closure in $X$, $x_{k,i}=X_k\cap Z_i$ and $t_i$ such that $Z_i\subset U_{t_i}$. The same local computation as in the case of a principal divisor shows that
$$(\overline{c_K}.\Delta)_{x_{k,i}}=[K(x_{K,i}):K] v(g_{t_i}(x_{K,i}))=\sum_{j_i=1}^{s_i} l_{i} v(g_{t_i}(x_{K,i})).$$ 
On the other hand, keeping the same notation as in the beginning of the proof,
$$\lan c_K\ ,\ n D_K\ran=\sum_i n_i\sum_{j_i=1}^{s_i}l_{i}\lambda_{n D_K}(x_{\overline{K},j_i}).$$
Consequently,
$$n\delta(c_K\ ,\ D_K)=\sum_i n_i \sum_{j_i=1}^{s_i}l_{i}\alpha_{t_i}(x_{\overline{K},j_i}).$$
By construction, the $\overline{K}$-point $x_{\overline{K},j_i}$ of $X_K$ belongs to $E_{t_i}$. Denoting by $|\cdot|$ the usual absolute value on $\bbR$, and setting
$$B:=\max_{t=1,\ldots,m}(\sup_{E_t} |\alpha_t|)\quad\in\bbR,$$
we obtain
$$|\delta(c_K\ ,\ D_K)|\leq \frac{1}{|n|}\sum_i |n_i|[K(x_{K,i}):K] B=\frac{2 B}{|n|}\deg^{+}c_K.$$
As the divisor $D_K$ is fixed, the numbers $n$ and $B$ are fixed, and so the right-hand side of the above inequality is bounded if $\deg^{+}c_K$ is.
\end{proof}

Let us note the following property of the pairing $[\ ,\ ]$, and consequently of N\'eron's pairing.

\begin{Prop}\label{equivrat}
Let $X_K$ be a proper geometrically normal and geometrically connected scheme over $K$. Let $c_K$ be a $0$-cycle on $X_K$ which is rationally equivalent to zero. Then 
$$[c_K\ ,\ D_{K}]\in \bbZ $$
for any divisor $D_{K}$ which is $\tau$-equivalent to zero on $X_K$ and whose support is disjoint from that of $c_K$.
\end{Prop}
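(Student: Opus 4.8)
The plan is to reduce to the case of a curve and there use a symmetry of the pairing $[\ ,\ ]$ to trade the rationally trivial $0$-cycle $c_K$ for a \emph{principal} divisor, which then extends to a principal divisor on the model with multiplier $1$, so that the denominator of Definition \ref{p} disappears.

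By bilinearity of $[\ ,\ ]$ we may assume $c_K=\pi_{*}\!\big(\div_{C}(f)\big)$ for a single proper normal — hence regular — curve $C$ over $K$, a finite morphism $\pi\colon C\to X_K$ and $f\in K(C)^{*}$: this is exactly what it means for $c_K$ to be rationally equivalent to zero, once each integral curve occurring in a defining relation is replaced by its normalization. The field of constants $K'=H^{0}(C,\cO_{C})$ is finite over $K$, again complete discretely valued with residue field $k$. Choose a regular proper flat $R'$-model $\cC$ of $C$ and an $R$-morphism $\varphi\colon\cC\to X$ extending $\pi$ — take a regular model of the closure of the graph of $\cC^{(0)}\dashrightarrow X$, exactly as in the treatment of condition $2$ in the proof of Theorem \ref{comp}; recall that a regular model is semi-factorial. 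Since the residue fields of $R$ and $R'$ both equal $k$, lengths over the two rings agree, so the intersection multiplicities below may be computed over either; and the projection formula underlying Lemma \ref{functoriality} gives
$$[c_K\ ,\ D_K]_X=\big[\,\div_{C}(f)\ ,\ \pi^{*}D_K\,\big]_{\cC},$$
where $\pi^{*}D_K\in\Div^{0}(C)=\Div^{\tau}(C)$ still has support disjoint from that of $\div_{C}(f)$.

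It is therefore enough to prove $[D_1,D_2]_{\cC}\in\bbZ$ for $D_1,D_2\in\Div^{0}(C)$ with disjoint supports and $D_1$ principal. The pairing $[\ ,\ ]_{\cC}$ is \emph{symmetric} on such pairs: pick $n>0$ and $\Delta_i\in\Div^{0}(\cC)$ with $(\Delta_i)_K=nD_i$ (the Picard functor of the regular model $\cC$: Raynaud \cite{R}, cf.\ \cite{P} 3.11 and \cite{BL} 4.3, $n$ being the exponent of the component group of the N\'eron model of the Jacobian), and write $\Delta_i=\overline{nD_i}+W_i$ with $W_i$ vertical. Numerical triviality of $\Delta_i$ on $\cC_k$ gives $MW_i=-n\,\rho(\overline{D_i})$, where $M=M^{t}$ is the intersection matrix of $\cC_k$ and $\rho(\overline{D_i})_\ell=(\Gamma_\ell.\overline{D_i})$; using the symmetry of the arithmetic intersection pairing on the regular surface $\cC$,
$$n[D_1,D_2]_{\cC}=n(\overline{D_1}.\overline{D_2})+\rho(\overline{D_1})^{t}W_2,\qquad \rho(\overline{D_1})^{t}W_2=-\tfrac1n(MW_1)^{t}W_2=-\tfrac1n W_1^{t}MW_2=\rho(\overline{D_2})^{t}W_1,$$
whence $[D_1,D_2]_{\cC}=[D_2,D_1]_{\cC}$. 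Finally, as $D_1$ is principal on $C$ it extends to the principal — hence $\tau$-equivalent to zero — divisor $\div_{\cC}(f)$ on $\cC$, so taking multiplier $1$ in Definition \ref{p} yields $[D_1,D_2]_{\cC}=[D_2,D_1]_{\cC}=\big(\overline{D_2}.\div_{\cC}(f)\big)\in\bbZ$ by the very definition of the intersection multiplicity. Summing over the curves occurring in $c_K$ finishes the proof.

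The main obstacle is the symmetry of $[\ ,\ ]$ on curves, i.e.\ the linear algebra with the symmetric matrix $M$ — in essence Bosch--Lorenzini \cite{BL} 4.3 — including the verification that the vertical divisors $W_i$ are well defined modulo $\ker M=\bbQ\,\cC_k$, on which $\rho(\overline{D_i})$ vanishes because $\deg D_i=0$, so that the bilinear expressions above are meaningful. A secondary technical point is that the normalization of a curve in $X_K$ is only geometrically connected over the finite extension $K'$ and, in characteristic $p$, need not be geometrically normal; this is why the reduction is carried out through the projection formula rather than Lemma \ref{functoriality} verbatim, and why for $\cC$ one appeals to the Picard functor of regular models of curves directly rather than to \cite{P} 3.11 in the form stated.
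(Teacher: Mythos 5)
Your proof is correct and follows essentially the same route as the paper's: reduce by the projection formula to a regular model of the curve carrying the rational equivalence, establish the symmetry of $[\ ,\ ]$ there via the symmetric intersection matrix $M$, and conclude by pairing against the principal extension $\div_{\cC}(f)$, which has multiplier $1$. The remark following the paper's proof describes its argument in exactly these terms, including your observation that the curve need not be geometrically connected or geometrically normal over $K$, which is why one must argue on the model directly rather than quote Lemma \ref{functoriality} verbatim.
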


\begin{proof}
As $[\ ,\ D_{K}]$ is $\bbZ$-linear, we have to show that if $c_{K}=(\varphi_{K})_*\div_{C_K} f$ for some $K$-morphism
$$\varphi_K: C_K\ra X_K$$
from a proper normal connected curve $C_K$ to $X_K$, and some nonzero $f\in K(C_K)$, then
$$[c_K\ ,\ D_{K}]\in \bbZ. $$
As $R$ is excellent, there exists a proper flat \emph{regular} model $C/R$ of $C_K$. On the other hand, let us consider a proper flat normal semi-factorial model  $X/R$ of $X_K$. After replacing $C$ by a desingularisation of the graph of the rational map $C\dashrightarrow X$ induced by $\varphi_K$, we can suppose that $\varphi_K$ extends to an $R$-morphism $\varphi:C\ra X$. If $\Delta$ is a divisor on $X$ which is $\tau$-equivalent to zero and such that $\Delta_K=n D_{K}$ for some integer $n\neq 0$, then
$$[c_K\ ,\ D_{K}]:=\frac{1}{n}\big(\overline{(\varphi_{K})_*\div_{C_K} f}.\Delta\big)=\frac{1}{n}\big(\overline{\div_{C_K} f}.\varphi^*\Delta\big)$$
by the projection formula. Let us write
$$\div_C f=\overline{\div_{C_K}f}-V\quad\textrm{and}\quad\varphi^*\Delta=\overline{(\varphi_K)^*\Delta_K}-W$$
for some vertical divisors $V$ and $W$ on $C/R$. Denote by $\Gamma_1,\ldots,\Gamma_{\nu}$ the reduced irreducible components of $C_k$, by $M$ the intersection matrix associated to $C_k$ (as defined in the introduction), and by $\rho:\Pic(C)\ra\bbZ^{\nu}$ the degree homomorphism $(E)\mapsto (E\cdot\Gamma_i)_{i=1,\ldots,\nu}$. Following \cite{BLR} 9.2/13, the divisor $E$ on the $R$-\emph{curve} $C$ is algebraically equivalent to zero if and only if $(E)$ belongs to the kernel of $\rho$. Therefore the $\tau$-equivalence relation and the algebraic equivalence relation on $C/R$ are the same, and the linear equivalence classes of $\varphi^*\Delta$ and $\div_C f$ belongs to the kernel of $\rho$. Thus we get:
$$\rho(\overline{\div_{C_K}f})=\rho(V)=MV\quad\textrm{and}\quad\rho(\overline{(\varphi_K)^*\Delta_K})=\rho(W)=MW,$$
where we have identified a vertical divisor on $C/R$ with an element of $\bbZ^{\nu}$. Next, we use that the matrix $M$ is \emph{symmetric} to obtain
$$(\overline{\div_{C_K}f}.W)={}^{t}W\rho(\overline{\div_{C_K}f})={}^{t}WMV={}^{t}VMW=(\overline{(\varphi_K)^*\Delta_K}.V).$$ 
Then it follows that
$$[c_K\ ,\ D_{K}]=\frac{1}{n}\big(\overline{(\varphi_K)^*\Delta_K}.\div_C f\big)=\big(\overline{(\varphi_K)^*D_K}.\div_C f\big)\in\bbZ.$$
\end{proof}

\begin{Rem}
\emph{Let us keep the notation of the proof of \ref{equivrat}. If the curve $C_K$ is \emph{geometrically} normal and \emph{geometrically} connected, the pairing $[\ ,\ ]$ is defined on $C_K$ and
$$\big(\overline{(\varphi_K)^*D_K}.\div_C f\big)=[(\varphi_K)^{*}D_K\ ,\ \div_{C_K} f].$$
In other words, in this case, the proof consists in using the functoriality of the pairing $[\ ,\ ]$, then showing that it is  \emph{symmetric} for curves, and finally to apply the definition of the pairing for a principal divisor. The symmetry property of N\'eron's pairing $\lan\ ,\ \ran$ for such a curve is well known: e.g. see \cite{La} 11.3.6 et 11.3.7. But here, there is no reason for the curve $C_K$ coming from the rational equivalence relation to satisfy the above \emph{geometric} hypothesis. So we could not use directly the properties of the pairing $\lan\ ,\ \ran$. However, over an excellent discrete valuation ring, there is no need of these geometric hypothesis on $C_K$ for the existence of the regular model $C/R$. So we have been able to prove the proposition for the pairing $[\ ,\ ]$, and thus also for N\'eron's pairing $\lan\ ,\ \ran$ thanks to Theorem \ref{comp}.}
\end{Rem}

\section{Duality and algebraic equivalence for models of abelian varieties}\label{section2}

\subsection{Grothendieck's duality for N\'eron models} \label{dg}

Let us recall here Grothendieck's duality theory for N\'eron models of abelian varie\-ties, as developped in \cite{SGA7} VII-VIII-IX. 

Let $R$ be a discrete valuation ring with perfect residue field $k$ and function field $K$. Let $A_K$ be an abelian variety over $K$, with dual $A_{K}'$. Let $A/R$, $A'/R$ be the N\'eron models of $A_{K}$, $A_{K}'$, and $\Phi_A$, $\Phi_{A'}$ be the \'etale $k$-group schemes of connected components of the special fibers $A_k$, $A_{k}'$.

By definition, the abelian variety $A_K$ represents the identity component $\Pic_{A_K/K}^0$ of the Picard functor, and the canonical isomorphism $A_K=\Pic_{A_K/K}^0$ is given by the Poincar\'e sheaf $\cP_K$ on $A_K\times_K A_{K}'$ birigidified along the unit sections of $A_K$ and $A_{K'}$. Now, this sheaf is canonically endowed with the structure of a \emph{biextension of $(A_K,A_{K}')$ by $\bbG_{m,K}$} (\emph{loc. cit.} VII 2.9.5). Then the duality theory for N\'eron models is to understand how this biextension \emph{extends} at the level of N\'eron models. In this view, Grothendieck attached to $\cP_K$ a canonical pairing
$$\lan\ ,\ \ran:\Phi_A\times_k\Phi_{A'}\ra\bbQ/\bbZ,$$
which measures the obstruction to extend $\cP_K$ as a biextension of $(A,A')$ by $\bbG_{m,R}$. The duality statement is: \emph{this pairing is a perfect duality} (\emph{loc. cit.} IX 1.3). As mentioned in the introduction, it has been proved in various situations, including the semi-stable case (Grothendieck \emph{loc. cit.} IX 11.4 and Werner \cite{W}) and the mixed characteristic case (B\'egueri \cite{Be}). In general, the duality statement remains a conjecture.

Another way to state the duality is the following (e.g. see \cite{Bosch} 4.1). As the component group of the special fiber of the identity component $(A')^0$ of $A'$ is trivial, the Poincar\'e biextension $\cP_K$ extends to the product $A\times_R (A')^0$. Then it defines a canonical morphism from $(A')^0$ to the $\fppf$ sheaf of extensions of $A$ by $\bbG_{m,R}$
$$(A')^0\ra\uExt(A,\bbG_{m,R}).$$
The duality statement becomes: this homomorphism is bijective. Furthermore, denoting by $\cG$ the N\'eron model of $\bbG_{m,K}$ over $R$, the sheaf $\uExt(A,\bbG_{m,R})$ injects into the sheaf $\uExt(A,\cG)$, which is represented by $A'$. At this point, note that the duality statement holds if and only if it holds after the base change $R\ra \widehat{R^{\sh}}$, where $\widehat{R^{\sh}}$ is the completion of the strict henselization of $R$. Thereby we can assume $R$ to be complete with algebraically closed residue field.  Then, the duality statement is equivalent to the bijectivity of the homomorphism of abstract groups
$$(A')^0(R)\ra\Ext(A,\bbG_{m,R}).$$
In other words, over a complete discrete valuation ring $R$ with algebraically closed residue field, the duality for $A$ and $A'$ can be stated as follows: \emph{the group $(A')^0(R)$ parametrizes the extensions of $A$ by $\bbG_{m,R}$.}

Now, it is always possible to compactify $A$ into a proper flat normal $R$-scheme $\overline{A}$ such that the canonical map $\Pic(\overline{A})\ra\Pic(A)$ is surjective: see \cite{P} 2.16. As $\overline{A}/R$ is proper and flat, it makes sense to speak about algebraic equivalence on $\overline{A}$ using the identity component of the Picard functor $\Pic_{\overline{A}/R}$, as defined in subsection \ref{SNMI}. Our goal in this section is to understand the duality from the viewpoint of algebraic equivalence, starting from the canonical isomorphism $A_{K}'=\Pic_{A_K/K}^0$. To do this, we need the following definitions. 

\paragraph{$\bbQ$-divisors and $\tau$-equivalence.} \label{qdiv}
Let $Z$ be a \emph{normal} locally noetherian scheme, so that the canonical homomorphism from the group of divisors on $Z$ into the one of $1$-codimensional cycles is \emph{injective} (\cite{EGA IV}${}_4$ 21.6.9 (i)). A $1$-codimensional cycle $C$ on $Z$ is said to be a \emph{$\bbQ$-divisor} if there exists $n\in \bbZ\setminus\{0\}$ such that $n C$ is a divisor.

Let $Z\ra T$ be a proper morphism of schemes, with $Z$ locally noetherian and normal. A $\bbQ$-divisor $C$ on $Z$ is said to be \emph{$\tau$-equivalent to zero} if there exists $n\in \bbZ\setminus\{0\}$ such that $n C$ is a divisor on $Z$ which is $\tau$-equivalent to zero. 

\paragraph{}
Then, assuming the base $R$ to be a complete discrete valuation ring with algebraically closed residue field, we obtain the following formulation of the duality statement for $A$ and $A'$: 

\emph{the group $(A')^0(R)$ parametrizes the $\bbQ$-divisors on $\overline{A}$ which are $\tau$-equivalent to zero} 

(Theorem \ref{tradconj} below). When $\overline{A}$ is locally factorial, e.g. regular, this amounts to say that there is a canonical isomorphism 
$$(A')^0(R)\xrightarrow{\sim} \Pic_{\overline{A}/R}^{\tau}(R).$$

\subsection{About nonrational $0$-cycles on abelian varieties} \label{0}
To investigate algebraic equivalence on $\overline{A}$ (above notation), we need some preparation about \emph{nonrational} $0$-cycles on $A_K$, especially those which are supported by \emph{inseparable} points over $K$.

Let $K$ be a field and let $A_K$ be an abelian variety over $K$. Let $d$ be a positive integer and let $\Hilb_{A_K/K}^d$ be the Hilbert scheme of points of degree $d$ on $A_K$. The Grothendieck-Deligne norm map
$$\sigma_d:\Hilb_{A_K/K}^d\ra A_{K}^{(d)}$$
defined in \cite{SGA4} XVII page 184 (see also \cite{BLR} pages 252-254) maps $\Hilb_{A_K/K}^d$ to the $d$-fold \emph{symmetric} product $A_{K}^{(d)}$. On the other hand, the map
$$A_{K}^{d}\ra A_K,\quad (x_1,\ldots,x_d)\mapsto x_1+\cdots+x_d,$$ 
induces a map
$$m_d:A_{K}^{(d)}\ra A_K.$$
Let us set 
$$\cS_d:=m_d\circ\sigma_d:\Hilb_{A_K/K}^d\ra A_K.$$

Let $a_K\in A_K$ be a closed point of degree $d$, that is to say, the residue filed extension $K(a_K)/K$ has degree $d$. It corresponds to a rational point $h(a_K)\in\Hilb_{A_K/K}^d(K)$. We will need an explicit description of its image $\cS_d(h(a_K))\in A_K(K)$, when considered as an element of $A_{\overline{K}}(\overline{K})$ ($\overline{K}$ is an algebraic closure of $K$). 

Let us consider the artinian $\overline{K}$-scheme $a_K\otimes_K\overline{K}$. It is supported by some $a_{j}\in A_{\overline{K}}(\overline{K})$, $j=1,\ldots,s$, where $s$ is the separable degree of $K(a_K)/K$. The length of each local component of $a_K\otimes_K\overline{K}$ is equal to the inseparable degree of $K(a_K)/K$, and will be denoted by $l$. So the effective $0$-cycle associated to $a_K\otimes_K\overline{K}$ is
$$\sum_{j=1}^{s}l [a_{j}]\quad\in Z_0(A_{\overline{K}}).$$
We are going to show that 
$$\cS_d(h(a_K))=\sum_{j=1}^{s}l a_j\in A_{\overline{K}}(\overline{K}).$$
Note that when $K(a_K)/K$ is separable, it follows from Galois descent that the right-hand-side of the equality belongs to $A_K(K)$. However, in the inseparable case, we need the $K$-morphism $\cS_d$ and the above claimed equality.

\begin{Lem}
Let $C$ be an artinian algebra over an algebraically closed field $\overline{K}$. Let $C_1,\ldots,C_s$ be the local components of $C$, with respective lengths $l_1,\ldots,l_s$, and let $u_j:C_j\ra \overline{K}$ be the canonical surjection from $C_j$ to its residue field. Then, for all $c=(c_1,\ldots,c_s)\in C$, the following formula holds for the norm of $c$ over $\overline{K}$:
$$N_{C/\overline{K}}(c)=\prod_{i=1}^s (u_j(c_j))^{l_j}.$$
\end{Lem}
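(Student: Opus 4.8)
The plan is to reduce to the case where $C$ is local by the product decomposition of an artinian ring, and then to compute the norm in the local case by an eigenvalue argument that exploits the algebraic closedness of $\overline{K}$. Observe first that $C$ is automatically finite-dimensional over $\overline{K}$: each $C_j$ is local artinian with residue field $\overline{K}$, hence has nilpotent maximal ideal with finite-dimensional successive quotients, so that $N_{C/\overline{K}}$ makes sense and $l_j = \dim_{\overline{K}} C_j$.

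First I would use that $C$ decomposes as a product of $\overline{K}$-algebras $C = \prod_{j=1}^s C_j$, so that as a $\overline{K}$-vector space $C = \bigoplus_{j=1}^s C_j$ and, for $c = (c_1,\dots,c_s)$, the multiplication endomorphism $\varphi_c \colon x \mapsto cx$ of $C$ is block-diagonal with $j$-th block the multiplication endomorphism of $C_j$ by $c_j$. Taking determinants gives $N_{C/\overline{K}}(c) = \prod_{j=1}^s N_{C_j/\overline{K}}(c_j)$, so it is enough to treat the case $s = 1$: $C$ local, with maximal ideal $\mathfrak m$, residue field $\overline{K}$, canonical surjection $u \colon C \to \overline{K}$, and $l = \dim_{\overline{K}} C$; the claim becomes $N_{C/\overline{K}}(c) = u(c)^{l}$.

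In this local case I would compute the eigenvalues of $\varphi_c \in \End_{\overline{K}}(C)$. Since $\overline{K}$ is algebraically closed, $N_{C/\overline{K}}(c) = \det \varphi_c$ equals the product, with multiplicities, of the roots of the characteristic polynomial of $\varphi_c$. A scalar $\mu \in \overline{K}$ is an eigenvalue of $\varphi_c$ if and only if $\varphi_c - \mu\,\Id = \varphi_{c-\mu}$ fails to be injective, hence (as $C$ is finite-dimensional) fails to be bijective, hence if and only if $c - \mu$ is a nonunit of $C$, i.e.\ $c - \mu \in \mathfrak m$, i.e.\ $\mu = u(c)$. Thus $\varphi_c$ has the single eigenvalue $u(c)$ with multiplicity $l = \dim_{\overline{K}} C$, and $N_{C/\overline{K}}(c) = u(c)^{l}$. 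Equivalently one may write $c = u(c)\cdot 1 + n$ with $n \in \mathfrak m$ nilpotent, so that $\varphi_c = u(c)\,\Id + \varphi_n$ with $\varphi_n$ nilpotent, whence the characteristic polynomial of $\varphi_c$ is $(T - u(c))^{l}$.

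Recombining, $N_{C/\overline{K}}(c) = \prod_{j=1}^{s} N_{C_j/\overline{K}}(c_j) = \prod_{j=1}^{s} u_j(c_j)^{l_j}$, which is the asserted formula. I do not expect a genuine obstacle: the whole argument is elementary linear algebra over an algebraically closed field. The only two points deserving a word of care are the multiplicativity of the norm under the decomposition $C = \prod_j C_j$ and the identification, in a local ring, of the unique eigenvalue of multiplication by $c$ with the image $u(c)$ of $c$ in the residue field.
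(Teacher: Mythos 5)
Your proof is correct and takes essentially the same route as the paper: reduce to the local case via the product decomposition, write $c = u(c)\cdot 1 + \epsilon$ with $\epsilon$ in the maximal ideal (hence nilpotent), and conclude $N_{C/\overline{K}}(c) = u(c)^{l}$. The only cosmetic difference is that the paper exhibits this by choosing a basis adapted to the $\mathfrak{m}$-adic filtration, making the matrix of multiplication by $c$ triangular with $u(c)$ on the diagonal, while you phrase the same fact through the eigenvalues of $\varphi_c$.
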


\begin{proof}
We can assume that $C$ is local, with length $l$. Let $\fm$ be the maximal ideal of $C$. Let $n$ be the smallest integer such that $\fm^n=0$. Choose a basis $\cE=\cE_{0}\coprod\ldots\coprod \cE_{n-1}$ of $C$ over $\overline{K}$ which is adapted to the filtration
$$0=\fm^n\subset \fm^{n-1} \subset\cdots \fm \subset C,$$
that is, $\cE_i$ is contained in $\fm^{i}\backslash\fm^{i+1}$ and induces a basis of the $\overline{K}$-vector space $\fm^{i}/\fm^{i+1}$.

Fix $c\in C$ and let $M$ be the matrix of multiplication-by-$c$ in the basis $\cE$. Write $c=\lambda+\epsilon$ with $\lambda\in\overline{K}$ and $\epsilon\in\fm$. Then $M$ is a lower triangular matrix, whose each diagonal entry is $\lambda$. Hence $N_{C/\overline{K}}(c)=\lambda^l$, as required.
\end{proof}

Let us use the lemma to compute $\sigma_d(h(a_K))$, considered as an element of $A_{\overline{K}}^{(d)}(\overline{K})$. Set $C:=\Gamma(a_K\otimes_K\overline{K})$ and $\TS_{\overline{K}}^d(C):=(C^{\otimes d})^{\fS_d}\subseteq C^{\otimes d}$ where $\fS_d$ is the symmetric group acting on $C^{\otimes d}$ by permuting factors. By definition, the point $\sigma_d(h(a_K))\in (a_{K}\otimes_K\overline{K})^{(d)}(\overline{K})\subset A_{\overline{K}}^{(d)}(\overline{K})$ correponds to the unique $\overline{K}$-algebra homomorphism 
$$\TS_{\overline{K}}^d(C)\ra\overline{K},\quad c^{\otimes d} \mapsto N_{C/\overline{K}}(c).$$ 
Now, from the lemma, this homomorphism is induced by the point
$$\big(a_1,\ldots,a_1,a_2,\ldots,a_2,\ldots,a_s,\ldots,a_s\big)\in A_{\overline{K}}^{d}(\overline{K}),$$ where $a_j$ is repeated $l$ times.
 
Next, the element $\cS_d(h(a_K))\in A_{\overline{K}}(\overline{K})$ is just the sum 
$$m_d(\sigma_d(h(a_K)))=\sum_{j=1}^{s}l a_j\in A_{\overline{K}}(\overline{K}),$$
as claimed.

\begin{Not} \label{Not1}
The above $K$-morphisms $\cS_d$ induce a homomorphism
$$\cS:Z_0(A_K)\longrightarrow A_K(K)$$
from the group of $0$-cycles on $A_K$ to the one of $K$-rational points: if $a_K\in A_K$ is a closed point of degree  $d$, defining $h(a_K)\in\Hilb_{A_K/K}^d(K)$, then $\cS(a_K):=\cS_d(h(a_K))$.
\end{Not}

We will also need to `translate divisors on $A_K$ by nonrational points'. 

Let $\Div_{A_K/K}$ be the scheme of relative effective divisors on $A_K$. Fix a positive integer $d$ and consider the map 
$$A_{K}^d\times_K\Div_{A_K/K}\ra  \Div_{A_K/K}$$                                                                                                                                                                                                                                                                                                                                                                                                             which is given by the functorial formula
$$\big((a_1,\ldots,a_d)\ ,\ D\big)\mapsto D_{a_1}+\cdots+D_{a_d},$$
where $D_a$ is obtained from $D$ by translation by the section $a$. By symmetry, it induces a map 
\begin{eqnarray*}
A_{K}^{(d)}\times_K\Div_{A_K/K}&\ra&  \Div_{A_K/K}.                                                                                                                                                                                                                                                                                                                                                                                                              \end{eqnarray*}
By composing with the norm map $\sigma_d$, the latter gives rise to a map 
\begin{eqnarray*}
\Hilb_{A_K/K}^d\times_K\Div_{A_K/K}&\ra&  \Div_{A_K/K}.                                                                                                                                                                                                                                                                                                                                                                                                              \end{eqnarray*}

Let $a_K\in A_K$ be a closed point of degree $d$ and $D_K$ be an effective divisor on $A_K$. Denote by $(D_K)_{a_K}\in\Div_{A_K/K}(K)$ the image of $(h(a_K),D_K)$ by the previous arrow. As above, write 
$$\sum_{r=1}^d\ [a_{\overline{K},r}]$$ 
for the $0$-cycle associated to $a_K\otimes_K\overline{K}$. In this expression, repetitions are allowed. Then, using the above computation of $\sigma_d(h(a_K))$, we see that
$(D_K)_{a_K}$, as an element of $\Div_{A_{\overline{K}}/\overline{K}}(\overline{K})$, is equal to
$$\sum_{r=1}^d (D_{\overline{K}})_{a_{\overline{K},r}}.$$
When $a_K$ is \'etale over $K$, it is easy to see that the last divisor descends on $A_K$. But this turns out to be true in general because of the above construction. Moreover, this description shows that the formation of $(D_K)_{a_K}$ is additive in $D_K$. We can thus associate a divisor $(D_K)_{a_K}$ on $A_K$ to \emph{any} divisor $D_K$ in the following way: identifying divisors on $A_K$ with $1$-codimensional cycles, first use the above to define $(D_K)_{a_K}$ when $D_K$ is a prime cycle, and then extend by $\bbZ$-linearity.

\begin{Not} \label{Not2}
If $c_K$ is a $0$-cycle on $A_K$ and $D_K$ a divisor on $A_K$, define the divisor $(D_K)_{c_K}$ on $A_K$ by $\bbZ$-lineraity from the above situation where $c_K$ is a closed point.
\end{Not}

\subsection{Algebraic equivalence on semi-factorial compactifications} \label{conjpic}

Using the existence of semi-factorial compactifications (\cite{P} 2.16), the notion of $\tau$-equivalence (subsection \ref{SNMI}), and of $\bbQ$-divisors (subsection \ref{dg}), we are going to prove the following result.

\begin{Th}\label{tradconj}
Let $R$ be a complete discrete valuation ring with algebraically closed residue field $k$ and function field $K$. Let $A_K$ be an abelian variety oevr $K$, with dual $A_{K}'$. Let $A$ (resp. $A'$) be the N\'eron model of $A_K$ (resp. $A_{K}'$) over $R$. Let $\overline{A}$ be a proper flat normal model of $A_K$ over $R$, equipped with an open immersion $A\ra\overline{A}$, such that the induced map $\Pic(\overline{A})\ra\Pic(A)$ is surjective. Then the duality statement of \cite{SGA7} IX 1.3 is equivalent to the following:

Let $a_{K}'\in A_{K}'(K)$ representing the linear equivalence class of a divisor $D_{K}'$ on $A_K$. Then $a_{K}'$ extends to a section of the identity component $(A')^0$ if and only if $D_{K}'$ can be extended to a $\bbQ$-divisor on $\overline{A}$ which is $\tau$-equivalent to zero.

When $\overline{A}$ is locally factorial (e.g. regular), this duality statement reduces to the following: 

The canonical map $A_{K}'(K)\xrightarrow{\sim}\Pic_{A_K/K}^0(K)$ induces a bijection 
$$(A')^0(R)\xrightarrow{\sim} \Pic_{\overline{A}/R}^{\tau}(R).$$
\end{Th}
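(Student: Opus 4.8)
The plan is to reduce the equivalence to the two known reformulations of Grothendieck's duality recalled in subsection \ref{dg}: namely that duality holds if and only if the natural map $(A')^0(R)\ra\Ext(A,\bbG_{m,R})$ is bijective, and equivalently if and only if $(A')^0(R)$ parametrizes extensions of $A$ by $\bbG_{m,R}$. So the real content is to compare ``extending a divisor class $D_K'$ as a $\bbQ$-divisor $\tau$-equivalent to zero on $\overline{A}$'' with ``extending the corresponding $\bbG_{m,K}$-extension of $A_K$ over $R$''. I would organise the argument in three movements: first, translate the statement about $\bbQ$-divisors on $\overline{A}$ into a statement about genuine divisors on a semi-factorial \emph{model} (not merely compactification) using the $\tau$-equivalence result \cite{P} 3.11 quoted before Definition \ref{p}; second, use the Bosch--Lorenzini link (\cite{BL} 4.4) between the duality pairing $\Phi_A\times\Phi_{A'}\ra\bbQ/\bbZ$ and N\'eron's pairing, together with the intersection-theoretic description of N\'eron's pairing furnished by Theorem \ref{comp} and the pairing $[\ ,\ ]$ computed on $\overline{A}$; third, feed in Proposition \ref{prop1} (the nonrational $0$-cycle computation) to see that the obstruction to extending $D_K'$ on $\overline{A}$ is exactly measured by the component-group pairing, hence vanishes precisely when $a_K'$ lifts to $(A')^0$.

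\textbf{Step 1: from $\bbQ$-divisors on $\overline{A}$ to divisors on a model.} Given $a_K'\in A_K'(K)=\Pic_{A_K/K}^0(K)$ represented by $D_K'\in\Div^0(A_K)$, I would note that $a_K'$ extends to $(A')^0(R)$ if and only if it extends to $A'(R)$ \emph{and} specialises into the identity component of $A_k'$; since $A'(R)=A_K'(K)$ by the N\'eron property, the only condition is the specialisation one. By \cite{P} 3.11 applied to $\overline{A}$ (which is proper flat normal and semi-factorial by hypothesis on $\Pic(\overline{A})\ra\Pic(A)$, combined with \cite{P} 2.16), there is an integer $n\neq 0$ and a divisor $\Delta$ on $\overline{A}$, algebraically equivalent to zero, with $\Delta_K=nD_K'$; thus $D_K'$ \emph{always} extends to a $\bbQ$-divisor that is $\tau$-equivalent to zero — so one must be careful. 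The point is that the theorem asks for the equivalence with extending $a_K'$ to $(A')^0$, and the subtlety is the \emph{compatibility of the algebraic-equivalence class with a fixed rigidification / choice}, i.e. the distinction between $\Pic^0$ and $\Pic^\tau$ and whether the relevant class on $\overline{A}$ lies in $\Pic^0_{\overline A/R}(R)$ rather than merely $\Pic^\tau_{\overline A/R}(R)$. I would therefore reinterpret the statement via the canonical map $\Pic_{\overline{A}/R}^\tau(R)\ra \Pic_{A_K/K}^0(K)=A_K'(K)$ and identify, using Raynaud's theory of the Picard functor of semi-factorial models, the image of this map with exactly the subgroup of $A_K'(K)=A'(R)$ consisting of points that extend to $(A')^0(R)$ — this is the heart of the locally factorial case.

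\textbf{Step 2: the Bosch--Lorenzini comparison.} For a class $a_K'$ specialising to $\Phi_{A'}$, its image in the component group pairs with $\Phi_A$ via Grothendieck's $\bbQ/\bbZ$-pairing, and by \cite{BL} 4.4 this equals (mod $\bbZ$) N\'eron's pairing $\lan c_K, D_K'\ran$ for $0$-cycles $c_K\in Z_0^0(A_K)$ specialising to the relevant component of $\Phi_A$. By Theorem \ref{comp}, $\lan c_K,D_K'\ran=[c_K,D_K']$, which by Definition \ref{p} is $\tfrac1n(\overline{c_K}.\Delta)$ for the divisor $\Delta$ on $\overline{A}$ above. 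The divisor $\Delta$ can be chosen algebraically (not merely $\tau$-) equivalent to zero iff the associated class lies in $\Pic^0_{\overline A/R}$, and the defect is precisely the torsion class in $\Pic^\tau/\Pic^0$ of the special fibre, whose pairing against $\Phi_A$ (detected by the intersection numbers $(\overline{c_K}.\Delta)$ as $c_K$ ranges over all component-specialisations, using Proposition \ref{prop1} to handle nonrational points — this is where the $0$-cycles of \emph{nonrational} support are essential, since $A_K(K)$ alone need not detect all of $\Phi_A$) is exactly Grothendieck's pairing. Hence the duality pairing is perfect iff, for every $a_K'$ with trivial pairing, $D_K'$ extends to a $\bbQ$-divisor that is $\tau$-equivalent to zero \emph{with the class actually in} $\Pic^0_{\overline A/R}(R)$, i.e. iff ``$a_K'\in(A')^0(R)$'' $\Leftrightarrow$ ``$D_K'$ extends as stated''.

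\textbf{Main obstacle.} I expect the delicate point to be Step 1 / the bookkeeping in Step 2: precisely pinning down, via Raynaud's description of $\Pic_{\overline A/R}$ for a semi-factorial model, the identification of $\Pic^\tau_{\overline A/R}(R)$ (resp.\ the $\bbQ$-divisor formulation in the non-locally-factorial case) with the subgroup of $A'(R)$ of points landing in $(A')^0$, and checking that this identification is compatible with the Bosch--Lorenzini pairing in a way that makes ``perfect pairing'' translate \emph{exactly} into the stated biconditional — not merely into an implication. Handling the nonrational $0$-cycles correctly (so that the full component group $\Phi_A$, and not just the part seen by $A_K(K)$, is used to detect the obstruction) via Proposition \ref{prop1} and Notation \ref{Not2} will be the technical crux; the rest is assembling the cited results.
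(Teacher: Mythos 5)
There is a genuine gap, and it sits exactly at what you flag as the "subtlety" in Step 1. You claim that by \cite{P} 3.11 the divisor $D_K'$ \emph{always} extends to a $\bbQ$-divisor on $\overline{A}$ which is $\tau$-equivalent to zero, and you then relocate the content of the theorem to the distinction between $\Pic^0_{\overline{A}/R}$ and $\Pic^{\tau}_{\overline{A}/R}$. Both moves are wrong. What \cite{P} 3.11 produces is a divisor $\Delta$ on $\overline{A}$, $\tau$-equivalent to zero, with $\Delta_K=nD_K'$; writing $\Delta=n\overline{D_K'}+V$ with $V$ vertical, the cycle $\frac{1}{n}\Delta$ is in general \emph{not} a $\bbQ$-divisor in the sense of the paper, because a $\bbQ$-divisor must be a $1$-codimensional cycle with \emph{integral} coefficients and $V/n$ need not be integral (even modulo rational multiples of $\overline{A}_k$). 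The whole content of the equivalence is precisely this integrality of the vertical correction, which Proposition \ref{prop1} detects through the integrality of the numbers $[c_{K,i},D_K']$ for $0$-cycles $c_{K,i}$ built from the irreducible components of $\overline{A}_k$ and their multiplicities. The $\Pic^0$ versus $\Pic^{\tau}$ distinction plays no role: the condition in the statement is $\tau$-equivalence throughout, and the bijection in the locally factorial case lands in $\Pic^{\tau}_{\overline{A}/R}(R)$, not $\Pic^0$; your proposed mechanism ("the defect is the torsion class in $\Pic^{\tau}/\Pic^0$ of the special fibre, whose pairing against $\Phi_A$ is Grothendieck's pairing") is not how the obstruction is computed and would not yield the biconditional.

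A second, related gap: your reason for needing nonrational $0$-cycles is off, and the ingredient that actually handles them is missing from your plan. Since $R$ is Henselian with algebraically closed residue field, $A_K(K)$ does surject onto $\Phi_A$; the nonrational points arise because the cycles $c_{K,i}$ of Proposition \ref{prop1} have degree equal to the multiplicity $d_i$ of $\Gamma_i$, hence are supported at nonrational points as soon as $\overline{A}_k$ is nonreduced. Bosch--Lorenzini's Theorem \ref{BL} only treats cycles $[a_K]-[0_K]$ with $a_K\in A_K(K)$, so to pass from the vanishing of $\lan\ ,\ a'\ran$ to $[c_{K,i},P_{a_K'}]\in\bbZ$ one needs the comparison $\lan c_K,P_{a_K'}\ran\equiv\lan[\cS(c_K)]-[0_K],P_{a_K'}\ran \bmod \bbZ$ of Proposition \ref{infini}, which rests on the norm-map construction $\cS$ of subsection \ref{0}, a Poincar\'e divisor normalized as in Remark \ref{Poincare}, and the reciprocity law; you only cite Proposition \ref{prop1}, which supplies none of this. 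Finally, the paper passes from the obstruction statement (Theorem \ref{tradob}) to Theorem \ref{tradconj} by biduality (the pairing is perfect iff $\lan\ ,\ a'\ran=0$ forces $a'=0$, i.e. forces $a_K'$ into $(A')^0(R)$), and in the locally factorial case uses the injectivity of $\Pic^{\tau}_{\overline{A}/R}(R)\ra A_K'(K)$ coming from the multiplicity-one component of $\overline{A}_k$ (\cite{R} 6.4.1 3)); your announced route through $\Ext(A,\bbG_{m,R})$ is never connected to divisors and remains vestigial, while the converse direction (a $\bbQ$-divisor extension forces the obstruction to vanish, via sections through the regular locus $A\subseteq\overline{A}$ and Theorems \ref{comp} and \ref{BL}) is not addressed concretely at all.
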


The starting point is the link between Grothendieck's pairing recalled in subsection \ref{dg}, and N\'eron's pairing, which has been established by Bosch and Lorenzini: Grothendieck's pairing is the \emph{specialization} of N\'eron's pairing.

\begin{Th}\label{BL}\emph{\textbf{(\cite{BL} 4.4)}}
Keep the notation of Theorem \ref{tradconj}. Moreover, let $\Phi_{A}$ (resp. $\Phi_{A'}$) be the group of connected components of $A_k$ (resp. $A_{k}'$). On the one hand, consider Grothendieck's pairing \cite{SGA7} IX 1.3 
$$\lan\ ,\ \ran:\Phi_A\times\Phi_{A'}\ra\bbQ/\bbZ,$$
and on the other hand, consider N\'eron's pairing \cite{N} 
$$\lan\ ,\ \ran:Z_{0}^0(A_K)\times\Div^0(A_K)\ra\bbQ$$
(defined for $(c_K,D_{K})$ when the supports of $c_K$ and $D_{K}$ are disjoint).

Let $(a,a')\in\Phi_A\times\Phi_{A'}$. Fix a point $a_K\in A_K(K)$ specializing to $a$, and a divisor $D_{K}'\in\Div^0(A_K)$ whose image in $A_{K}'(K)$ specializes to $a'$. Assume that $a_K$ and $0_K$ do not belong to the support of $D_{K}'$. Then
$$\lan a\ ,\ a'\ran=-\lan\ [a_K]-[0_K]\ ,\ D_{K}' \ran\quad\mo\bbZ.$$
\end{Th}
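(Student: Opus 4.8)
The plan is to realize both pairings through the Poincaré biextension $\mathcal{P}_K$ of $(A_K,A'_K)$ by $\mathbb{G}_{m,K}$, using the general mechanism of \cite{SGA7} VII--IX (this is essentially the route of Bosch--Lorenzini). As recalled in subsection \ref{dg}, $\mathcal{P}_K$ extends, uniquely, to a biextension of $A\times_R(A')^0$ by $\mathbb{G}_{m,R}$, and, by the symmetry interchanging $A$ and $A'$, also to a biextension of $A^0\times_R A'$ by $\mathbb{G}_{m,R}$; it does not extend to $A\times_R A'$ in general, and Grothendieck's pairing is by definition a measure of this failure. Fix the $1$-dimensional $K$-vector space $V:=\mathcal{P}_{K,(a_K,b'_K)}$, where $b'_K\in A'_K(K)$ is the class of $D'_K$. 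Both pairings will be read off as relative positions of natural $R$-lattices inside $V$ (or inside $V^{\otimes n}$), and the whole proof consists of comparing those lattices.

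\emph{Step 1: Grothendieck's pairing on $V^{\otimes n}$.} View the given $a_K\in A_K(K)$ as $a_R\in A(R)$ and $b'_K$ as $a'_R\in A'(R)$, and choose $n\geq 1$ killing both $a$ in $\Phi_A$ and $a'$ in $\Phi_{A'}$, so $na_R\in A^0(R)$ and $na'_R\in(A')^0(R)$. Evaluating the extension of $\mathcal{P}_K$ over $A^0\times_R A'$ at $(na_R,a'_R)$ produces a distinguished $R$-generator $s_1$, up to $R^\times$, of the fibre $\mathcal{P}_{K,(na_K,b'_K)}$; evaluating the extension over $A\times_R(A')^0$ at $(a_R,na'_R)$ produces a distinguished $R$-generator $s_2$, up to $R^\times$, of $\mathcal{P}_{K,(a_K,nb'_K)}$. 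The biextension isomorphisms $\mathcal{P}_{K,(na_K,b'_K)}\cong V^{\otimes n}\cong\mathcal{P}_{K,(a_K,nb'_K)}$ place $s_1$ and $s_2$ in the same $K$-line $V^{\otimes n}$, and the construction of \cite{SGA7} IX amounts to $\langle a,a'\rangle=\tfrac{1}{n}\,v(s_1/s_2)\bmod\mathbb{Z}$; independence of the choices of $a_R,a'_R,n$ is the usual verification.

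\emph{Step 2: Néron's pairing on the same line.} The restriction $\mathcal{P}_K|_{A_K\times\{b'_K\}}$ is the line bundle $\mathcal{O}_{A_K}(D'_K)$ equipped with its canonical rigidification along $0_{A_K}$, and its fibre at $a_K$ is $V$. A rational section of $\mathcal{O}_{A_K}(D'_K)$ with divisor $D'_K$ is regular and invertible at $a_K$ and at $0_K$, hence yields another $R$-structure on $V$; by the defining property of the canonical Néron function associated to a divisor algebraically equivalent to zero (\cite{La} ch.\ 11, or equivalently the biextension-height description of \cite{MT}), the relative position of this lattice and of the biextension lattice $R\cdot s_1$ (rescaled into $V^{\otimes n}$ via the $n$-th power) is exactly $n\langle[a_K]-[0_K],D'_K\rangle$. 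This is also the content of the intersection formula of Definition \ref{p} computed on a semi-factorial compactification $\overline{A}$ of $A$: extend $\mathcal{O}_{A_K}(D'_K)$ to a line bundle on $\overline{A}$, and the intersection number $\tfrac{1}{n}(\overline{[a_K]-[0_K]}.\Delta)$ with $\Delta_K=nD'_K$ reads off precisely that relative position, while $\langle\,,\,\rangle=[\,,\,]$ by Theorem \ref{comp}. Comparing with Step 1 gives $\langle a,a'\rangle=-\langle[a_K]-[0_K],D'_K\rangle\bmod\mathbb{Z}$, the sign being forced by the orientation conventions for $\mathcal{P}_K$ and for the normalized Néron function, which one pins down on an elliptic curve (cf.\ \cite{BL} Example 5.8).

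The step I expect to be the main obstacle is the lattice comparison at the heart of Step 2: one must check that the $R$-lattice in $V^{\otimes n}$ cut out by the line-bundle extension of $\mathcal{O}_{A_K}(D'_K)$ over $\overline{A}$ is commensurable with $R\cdot s_1$ coming from the biextension extension over $A^0\times_R A'$, with relative position exactly $n\langle a,a'\rangle$. Here one genuinely uses the geometry of the model: after passing to the $n$-th power the line bundle extends over $\overline{A}$ (semi-factoriality, and the algebraic-equivalence input recalled just before Definition \ref{p}), whereas the \emph{biextension} structure does not extend, the discrepancy being exactly what is governed by the component group of $\mathrm{Pic}_{\overline{A}/R}$ in the spirit of Raynaud's theory and of Bosch--Lorenzini \cite{BL} 4.3. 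Two reductions keep the bookkeeping finite: by bilinearity one may treat $a$ and $a'$ componentwise, and one may move $D'_K$ by a translate $(D'_K)_{a_K}$ (Notation \ref{Not2}) to secure disjointness of supports; note there is no recourse to the usual base change to $\widehat{R^{\mathrm{sh}}}$, $R$ being already complete with $k$ algebraically closed, so the lattice computation must be carried out directly over $R$.
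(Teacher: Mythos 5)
The paper offers no proof of this statement to compare with: it is quoted from Bosch--Lorenzini (\cite{BL} 4.4) and used as an external input (notably in the proof of Theorem \ref{tradob}). So your proposal is an attempt to reprove the cited theorem, and it has to be judged on its own; unfortunately its central formula is false.

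In Step 1 you assert that $\lan a\ ,\ a'\ran=\frac{1}{n}v(s_1/s_2)\bmod\bbZ$, where $s_1$ is the $R$-generator produced by the extension of $\cP_K$ over $A^0\times_R A'$ at $(na_R,a_R')$, $s_2$ the one produced by the extension over $A\times_R(A')^0$ at $(a_R,na_R')$, both placed in $V^{\otimes n}$ via the partial group laws. This quantity is identically zero. Indeed, the two integral biextensions restrict to the same rigidified extension of $\cP_K$ over $A^0\times_R(A')^0$ (uniqueness of such extensions, or: two line bundles on $A^0\times_R(A')^0$ agreeing generically differ by a power of the special fibre, which the rigidification kills), and each is a biextension \emph{over $R$}, hence stable under both partial group laws. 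Therefore the $n$-fold second-law sum of $s_1$ and the $n$-fold first-law sum of $s_2$ are both $R$-generators of the \emph{same} line at $(na_R,na_R')\in A^0(R)\times(A')^0(R)$; under the canonical identifications with $V^{\otimes n^2}$ these sums are $s_1^{\otimes n}$ and $s_2^{\otimes n}$, so $n\,v(s_1/s_2)=0$ and in fact $R\cdot s_1=R\cdot s_2$ inside $V^{\otimes n}$. Since Grothendieck's pairing is nontrivial in general (already for a Tate curve), your recipe cannot compute it: comparing the two one-sided integral structures with each other detects nothing. What does carry the information is the comparison of \emph{one} such lattice with an independent reference generator of $V^{\otimes n}$ --- for instance $w^{\otimes n}$ for an arbitrary $w\in V$, or the lattice cut out by a rational section of $\cO_{A_K}(D_K')$ with divisor $D_K'$, which is what your Step 2 gestures at. That is essentially the genuine route (Bosch--Lorenzini, via the Mazur--Tate/Zarhin canonical $\bbQ$-splitting of the valuation push-out of $\cP_K(K)$: the splitting computes N\'eron's pairing, its reduction mod $\bbZ$ computes Grothendieck's pairing and kills both one-sided integral structures); but these two identifications are exactly the substance of the theorem, and your sketch asserts them rather than proving them. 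A secondary flaw: the sign cannot be ``pinned down on an elliptic curve'' unless you first prove that the two pairings differ by a universal constant independent of $A_K$, $(a,a')$ and all choices, which your argument does not establish.
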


Until the end of this section \ref{section2}, \textbf{we fix a complete discrete valuation ring $R$ with algebraically closed residue field $k$ and fraction field $K$}.

Next proposition is a key result about the pairing $[\ ,\ ]$ defined in subsection \ref{SNMI}.

\begin{Prop} \label{prop1}
Let $X_K$ be a proper geometrically normal and geometrically connected scheme over $K$. Let $X$ be a proper flat normal semi-factorial model of $X_K$ over $R$. Let $\nu$ be the number of irreducible components of the special fiber $X_k$. There exists some $0$-cycles of degree zero $c_{K,1},\ldots,c_{K,\nu}$ on $X_K$, with the following property:

if $D_K$ is a divisor on $X_K$ which is $\tau$-equivalent to zero, whose support is disjoint from those of the $c_{K,i}$, and if
$[c_{K,i}\ ,\ D_K]$ is an integer for all  $i=1,\ldots,\nu$, then there exists a $\bbQ$-divisor on $X$ which is $\tau$-equivalent to zero, with generic fiber $D_K$.
\end{Prop}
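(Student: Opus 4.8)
The plan is to translate the conclusion into the solvability over $\bbQ$ of a finite linear system governing a vertical correction of the schematic closure of $D_K$ in $X$, and to choose the $0$-cycles $c_{K,i}$ so that the integrality of the rational numbers $[c_{K,i},D_K]$ is precisely the condition under which that system is solvable.

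First I would set up the $c_{K,i}$. Write $X_k=\sum_{i=1}^{\nu}d_i\Gamma_i$ with the $\Gamma_i$ the reduced irreducible components, and fix one of them, say $\Gamma_1$, as a reference. For each $i$ I would produce a closed point $x_{K,i}$ of $X_K$ whose schematic closure in $X$ is local, finite and flat over $R$ and meets $X_k$ in a single regular point of $X$ lying on $\Gamma_i$ and on no other component; such points are obtained by a standard general-position argument together with the henselian lifting property of $X\to\Spec R$ at a regular point of $\Gamma_i$ in $X_k$ lying off the other components. After replacing the $x_{K,i}$ by closed points of suitably divisible residue degree over $K$, I would let $c_{K,i}\in Z_0^0(X_K)$ be the normalized difference between $[x_{K,i}]$ and a multiple of a fixed reference $0$-cycle supported on a point with closure through $\Gamma_1$; the relevant feature is that the schematic closure $\overline{c_{K,i}}$ of $c_{K,i}$ meets $X_k$ only along $\Gamma_i$ and $\Gamma_1$, with controlled multiplicities, so that the linear forms $E\mapsto(\overline{c_{K,i}}\cdot E)$ on vertical divisors, together with the combinatorics of $X_k$, separate the classes of the $\Gamma_j$ up to what is invisible to $\tau$-equivalence.

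For the core of the argument I would proceed as follows. By \cite{P} 3.11, after multiplying $D_K$ by the order of its class in the N\'eron--Severi group of $X_K$ so as to make it algebraically equivalent to zero, there is a divisor $\Delta_0$ on $X$ which is algebraically equivalent to zero, hence also $\tau$-equivalent to zero, with $(\Delta_0)_K=nD_K$; writing $\Delta_0=n\,\overline{D_K}+W_0$ with $W_0$ a vertical cycle, the task becomes to find a vertical $\bbQ$-divisor $V=\sum_i t_i\Gamma_i$ such that $\overline{D_K}+V$ is a $\bbQ$-divisor on $X$ which is $\tau$-equivalent to zero. I would translate this last requirement, via Raynaud's theory of the Picard functor of $X/R$ (\cite{R} 6 and 8, in the form used by Bosch and Lorenzini), into the vanishing of the image of $\overline{D_K}+V$ in a quotient of $\Pic(X_k)$ controlled by the component group of the N\'eron model of $\Pic_{X_K/K,\red}^0$; this is the higher-dimensional, non-regular analogue of the curve-case fact that having degree zero along every component is equivalent to being algebraically equivalent to zero, proved in \cite{BLR} 9.2/13 by means of the symmetric intersection matrix. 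The conditions on the $t_i$ then become a linear system over $\bbQ$ whose inhomogeneous term is built from the integers $(\overline{c_{K,i}}\cdot\overline{D_K})$ and from the equalities $(\overline{c_{K,i}}\cdot\Delta_0)=n\,[c_{K,i},D_K]$, so that the hypotheses $[c_{K,i},D_K]\in\bbZ$ for all $i$ are exactly what makes the system solvable; a solution $V$ then furnishes the desired $\bbQ$-divisor $\overline{D_K}+V$, with generic fiber $D_K$ and $\tau$-equivalent to zero.

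The step I expect to be the main obstacle is this translation. In the absence of a symmetric intersection pairing (a feature special to relative curves), one must extract the numerical criterion for $\tau$-equivalence on an arbitrary normal semi-factorial model $X/R$ directly from Raynaud's structure theory of $\Pic_{X/R}$, and show that the finitely many multisections $\overline{c_{K,i}}$ together with the vertical divisors $\Gamma_j$ do see the whole torsion part of the obstruction. Semi-factoriality of $X$ enters essentially here, both through \cite{P} 3.11, which guarantees that some multiple of $D_K$ extends to a $\tau$-equivalent-to-zero divisor, and through the control it gives on the cokernel of $\Pic(X)\to\Pic(X_K)$ in terms of data supported on $X_k$.
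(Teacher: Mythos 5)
Your reduction to a linear system hides the real difficulty, and the step you yourself flag as ``the main obstacle'' is exactly the missing idea: you need a numerical criterion deciding when $\overline{D_K}+V$, for a vertical $\bbQ$-correction $V=\sum t_i\Gamma_i$, is $\tau$-equivalent to zero on the arbitrary normal (non-regular, higher-dimensional) model $X$, and no analogue of \cite{BLR} 9.2/13 is available in this generality -- there is no intersection matrix, and Raynaud's theory does not hand you the image of the vertical divisors in the relevant component group. The paper's proof never needs such a criterion, because it never uses general vertical corrections: the only correction is a rational multiple of the \emph{whole} special fiber, $W=(\overline{c_K}.V)\frac{1}{d}[X_k]$, which is principal (cut out by a uniformizer). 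One writes $\Delta=n\overline{D_K}+\sum n_i\Gamma_i$ and uses the integrality hypotheses purely arithmetically: since $\overline{c_{K,i}}$ lies in the regular locus and meets only $\Gamma_i$, with multiplicity one, the computation $(\overline{c_{K,i}}.\Delta)=n(\overline{c_{K,i}}.\overline{D_K})+n_i-\frac{d_i}{d}(\overline{c_K}.V)$ shows that $[c_{K,i},D_K]\in\bbZ$ forces $n\mid n_i-\frac{d_i}{d}(\overline{c_K}.V)$ for every $i$; then $\Delta-W=nD$ with $D$ a $\bbQ$-cycle extending $D_K$, and $dnD=d\Delta-(\overline{c_K}.V)\,\mathrm{div}(\pi)$ is a divisor $\tau$-equivalent to zero, so $D$ is the required $\bbQ$-divisor. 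No ``solvability over $\bbQ$'' of a $\tau$-equivalence system ever enters; the whole content is a divisibility statement plus the triviality of the class of $X_k$.

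Your construction of the $c_{K,i}$ also skips the essential input. At a regular point of $X$ lying on a component $\Gamma_i$ of multiplicity $d_i>1$, henselian lifting gives no $R$-section at all; what is needed, and what the paper invokes, is Raynaud's result (\cite{R} 7.1.2) producing a finite flat multisection $Z_i$ of rank exactly $d_i$, contained in the regular locus, meeting $\Gamma_i$ with intersection number $1$ and no other component. The precise degrees matter: with $d=\gcd(d_i)$ one sets $c_{K,i}=[x_{K,i}]-\frac{d_i}{d}c_K$ for a fixed degree-$d$ cycle $c_K$, and it is this normalization that makes the coefficient of $n_i$ in the displayed formula equal to $1$, hence makes integrality of $[c_{K,i},D_K]$ yield the exact congruence $n_i\equiv\frac{d_i}{d}(\overline{c_K}.V)\pmod n$. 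Replacing the $x_{K,i}$ by points of ``suitably divisible residue degree'', as you suggest, would multiply that coefficient and only give a weaker congruence, so the intended conclusion would not follow from your hypotheses as stated.
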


\begin{proof}
Let $U$ be the open subset of $X$ consisting of the regular points. As $X$ is normal, for any irreducible closed subset $C$ of codimension $1$ in $X$, the intersection $C\cap U$ is a dense open subset of $C$. Furthermore, for any $1$-codimensional cycle $C$ on $X$, the restriction $C|_{U}$ is a \emph{divisor} on $U$.

Next, let $\Gamma_1,\ldots,\Gamma_\nu$ be the reduced irreducible components of $X_k$. Let $\xi_1,\ldots,\xi_\nu$ be the generic points of $\Gamma_1,\ldots,\Gamma_\nu$. Set $d_i:=\lo(\cO_{X_k,\xi_i})$.   
From \cite{R} 7.1.2, there exists, for all $i=1,\ldots,\nu$, an $R$-immersion $u_i:Z_i\ra U$, with $Z_i$ finite and flat over $R$, with rank $d_i$, such that $u_{i,k}(Z_{i,k})$ is a point $x_{i,k}$ of $\Gamma_i$. Then the intersction multiplicity of $Z_i$ and $\Gamma_j\cap U$ is equal to $1$ if $i=j$, and $0$ otherwise. In particular, the generic fiber of $Z_i$ is a closed point $x_{K,i}\in U_K$ of degree $d_i$. Moreover, as $Z_i$ is proper over $R$, the immersion $Z_i\ra X$ is closed. Finally, setting $d:=\pgcd(d_i,i=1,\ldots,\nu)$, an appropriate $\bbZ$-linear combination of the $x_{K,i}$ provides a $0$-cycle $c_K$ on $X_K$ of degree $d$. We set
$$c_{K,i}:=[x_{K,i}]-\frac{d_i}{d} c_K\quad\in Z_{0}^0(X_K).$$

Let $D_K \in\Div^{\tau}(X_K)$ whose support is disjoint from those of the $c_{K,i}$. Let $\Delta$ be a divisor on $X$ which is $\tau$-equivalent to zero, and $n$ be a nonzero integer such that $\Delta_K=n D_K$. Denoting by $\overline{D_{K}}$ the schematic closure of $D_K$ in $X$, we can view $\Delta$ as a $1$-codimensional cycle on $X$, and write   
$$\Delta=n\overline{D_{K}}+\sum_{i=1}^{\nu}n_i\Gamma_i$$
for some integers $n_1,\ldots,n_\nu$. Set $V:=\sum_{i=1}^{\nu}n_i\Gamma_i$. As the schematic closures $\overline{c_{K,i}}$ of the $c_{K,i}$ in $X$ are contained in $U$ (by construction), the following computation is valid:
\begin{eqnarray*}
\overline{c_{K,i}}.\Delta&=&n(\overline{c_{K,i}}.\overline{D_{K}})+(\overline{x_{K,i}}.V)-\frac{d_i}{d}(\overline{c_K}.V)\\
                         &=&n(\overline{c_{K,i}}.\overline{D_{K}})+n_i-\frac{d_i}{d}(\overline{c_K}.V).
\end{eqnarray*}
Assume that $[c_{K,i}\ ,\ D_K]$ belongs to $\bbZ$. Then, the left-hand side of the above equality belongs to $n\bbZ$. Consequently, there exists $r_{i}\in\bbZ$ such that 
$$n r_{i}=n_i-\frac{d_i}{d}(\overline{c_K}.V).$$
Now, consider the vertical cycle (with integral coefficients) 
$$W:=(\overline{c_K}.V)\frac{1}{d} [X_k].$$ 
By definition,
$$V-W=n\sum_{i=1}^\nu r_i\Gamma_i,\quad\textrm{that is,}\quad\Delta-W=n(\overline{D_K}-\sum_{i=1}^\nu r_i\Gamma_i).$$
The cycle $D:=\overline{D_K}-\sum_{i=1}^\nu r_i\Gamma_i$ is equal to $D_K$ on the generic fiber. This is a $\bbQ$-divisor on $X$ which is $\tau$-equivalent to zero because $d n D$ is a divisor on $X$ which is $\tau$-equivalent to zero. 
\end{proof}
 
Keep the notation of Proposition \ref{prop1}. Even if $X/R$ admits a section, so that $d$ is equal to $1$, the closed point $x_{K,i}$ is \emph{not} rational as soon as the special fiber $X_k$ is not reduced at the generic point of the irreducible component $\Gamma_i$. Therefore, if we want to combine Theorem \ref{BL} and Proposition \ref{prop1} when $X=\overline{A}$ (notation of Theorem \ref{tradconj}), we need to compare the values of N\'eron's pairing on the \emph{abelian variety} $A_K$ for $0$-cycles which are supported by nonrational points, with its values for $0$-cycles of the form $[a_K]-[0_K]$, with $a_K\in A_K(K)$. Here we will use the constructions of subsection \ref{0}. Moreover, we will need some biduality argument, involving a precise Poincar\'e divisor $P$, that we introduce now.

Consider an abelian variety $A_K$ over $K$, with dual $A_{K}'$. Let $\cP$ be a Poincar\'e sheaf on $A_K\times_K A_{K}'$, birigidified along $0_K\in A_K(K)$ and $0_{K}'\in A_{K}'(K)$. Choose a Poincar\'e divisor $P$ on $A_K\times_K A_{K}'$, that is, the invertible sheaf $\cO_{A_K\times_K A_{K}'}(P)$ is isomorphic to $\cP$. Replacing $P$ by a linearly equivalent divisor if necessary, one can assume that the restrictions $P|_{0_K\times_K A_{K}'}$ and $P|_{A_K\times_K 0_{K}'}$ are well-defined and equal to zero (see Remark \ref{Poincare} below). Then, for all $a_{K}'\in A_{K}'(K)$, the point $(0_K,a_{K}')$ does not belong to the support of $P$. In particular, the closed subscheme $A_K\times_K a_{K}'$ of $A_K\times_K A_{K}'$ is not contained in the support of $P$, and the divisor $P|_{A_K\times_K a_{K}'}$ on $A_K\times_K a_{K}'\simeq A_{K}$ is well-defined. It will be denoted by $P_{a_{K}'}$. Similarly, for all $a_{K}\in A_{K}(K)$, the divisor $P|_{a_K\times_K A_{K}'}$ on $a_{K}\times_K A_{K}'\simeq A_{K}'$ is well-defined, and will be denoted by $P_{a_{K}}$:
$$P_{a_{K}'}:=P|_{A_K\times_K a_{K}'},\quad P_{a_{K}}:=P|_{a_K\times_K A_{K}'}.$$
Note that the point $0_K$ is not in the support of $P_{a_{K}'}$, otherwise $(0_K,a_{K}')$ would be in that of $P$, and $a_{K}'$ would be in that of $P_{0_K}=0$. By a symmetric argument, the point $0_{K}'$ is not in the support of $P_{a_K}$.

\begin{Prop} \label{infini}
Let $c_K\in Z_{0}^0(A_K)$ and $a_{K}'\in A_{K}'(K)$. Assume:
\begin{enumerate}
\item the supports of $c_K$ and $P_{a_{K}'}$ are disjoint;
\item the rational point $\cS(c_K)$ (Notation \ref{Not1}) does not belong to the support of $P_{a_{K}'}$.
\end{enumerate}
Then the following relation between values of N\'eron's pairing on $A_K$ is true:
$$\lan c_K\ ,\ P_{a_{K}'} \ran \equiv \lan\ [\cS(c_K)]-[0_K]\ ,\ P_{a_{K}'} \ran\quad\mo\bbZ.$$
\end{Prop}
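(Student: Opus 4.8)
The plan is to prove the congruence modulo $\bbZ$, to reduce it by bilinearity and by the norm construction of subsection~\ref{0} to the case of a $0$-cycle supported by a single closed point, and then to invoke the symmetry of the Poincar\'e divisor. A first observation is that for fixed $c_K\in Z_0^0(A_K)$ the class $\lan c_K, D_K\ran\ \mo\ \bbZ$ depends only on the linear equivalence class of $D_K$: if $D_K=\div_{A_K}f$ is principal then, by the computation carried out in the proof of Theorem~\ref{comp}, $\lan c_K, D_K\ran=(\overline{c_K}.\div_X f)\in\bbZ$. Together with a moving lemma on the projective smooth scheme $A_K$, this shows that $\lan\ ,\ \ran$ induces, modulo $\bbZ$, a well-defined bilinear pairing $Z_0^0(A_K)\times A_K'(K)\ra\bbQ/\bbZ$, $(c_K,a_K')\mapsto\lan c_K,P_{a_K'}\ran$, with no disjointness hypothesis on the supports.

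Next I would check that $b\mapsto\lan [b]-[0_K],P_{a_K'}\ran\ \mo\ \bbZ$ is a group homomorphism $A_K(K)\ra\bbQ/\bbZ$: apply the functoriality of N\'eron's pairing (Lemma~\ref{carac}~$2$) to the addition morphism $m\colon A_K\times_K A_K\ra A_K$ and to the two projections, and use that $\cO_{A_K}(P_{a_K'})\in\Pic_{A_K/K}^0$, so that the theorem of the cube gives $m^*\cO(P_{a_K'})\cong p_1^*\cO(P_{a_K'})\otimes p_2^*\cO(P_{a_K'})$; evaluating the resulting congruence on the degree-zero $0$-cycle $[(b,b')]+[(0_K,0_K)]-[(b,0_K)]-[(0_K,b')]$ of $A_K\times_K A_K$ yields additivity modulo $\bbZ$. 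Since $\cS$ is a homomorphism with $\cS([b])=b$ for $b\in A_K(K)$ (Notation~\ref{Not1}), and since $c_K=\sum_i n_i\,([x_{K,i}]-d_i[0_K])$ whenever $\deg c_K=0$, these two remarks reduce the proposition to the assertion: for every closed point $x_K\in A_K$ of degree $d$,
$$\lan\ [x_K]-d[0_K]\ ,\ P_{a_K'}\ \ran\ \equiv\ \lan\ [\cS([x_K])]-[0_K]\ ,\ P_{a_K'}\ \ran\ \pmod{\bbZ}.$$

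To treat this single-point case I would introduce, with $L:=K(x_K)$, the norm divisor $P_{[x_K]}:=\mathrm{Nm}_{L/K}\bigl(P|_{x_K\times_K A_K'}\bigr)$ on $A_K'$, the norm being taken along the finite flat morphism $x_K\times_K A_K'\ra A_K'$ of degree $d$ (this makes sense since $x_K\times_K\{0_K'\}\notin\Supp P$). By Barsotti--Weil over $L$ the class of $\cP|_{x_K\times_K A_K'}$ in $\Pic_{A_L'/L}^0$ is the tautological point $\xi$, and, under the identification $\Pic_{A_L'/L}^0\simeq A_L$ furnished by biduality, the norm corresponds to the trace $A_L\ra A_K$; combined with the explicit formula $\cS([x_K])=\sum_j l\,a_j$ obtained in subsection~\ref{0}, this identifies the class of $P_{[x_K]}$ in $\Pic_{A_K'/K}^0\simeq A_K$ with $\cS([x_K])$, so that $P_{[x_K]}$ is linearly equivalent to $P_{\cS([x_K])}:=\cP|_{\cS([x_K])\times_K A_K'}$. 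Granting the \emph{reciprocity formula}
$$\lan\ [x_K]-d[0_K]\ ,\ P_{a_K'}\ \ran\ \equiv\ \lan\ [a_K']-[0_K']\ ,\ P_{[x_K]}\ \ran\ \pmod{\bbZ}$$
for every closed point $x_K$, the single-point case follows: applying it to $x_K$, then to the rational point $\cS([x_K])$ (for which $P_{[\cS([x_K])]}=P_{\cS([x_K])}$), and using $P_{[x_K]}\sim P_{\cS([x_K])}$, one gets
$$\lan [x_K]-d[0_K], P_{a_K'}\ran\equiv\lan [a_K']-[0_K'], P_{[x_K]}\ran=\lan [a_K']-[0_K'], P_{\cS([x_K])}\ran\equiv\lan [\cS([x_K])]-[0_K], P_{a_K'}\ran.$$

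The main obstacle is the reciprocity formula for a \emph{nonrational} point $x_K$; its case $d=1$ is the classical symmetry of N\'eron's pairing attached to the (symmetric) Poincar\'e biextension for two rational points, and the content is the extension of that symmetry to $0$-cycles supported by nonrational points. I expect to obtain it by transporting, via Theorem~\ref{comp} and Lemma~\ref{functoriality}, both members to intersection numbers on a proper flat normal semi-factorial model $Y$ of $A_K\times_K A_K'$ on which the Poincar\'e sheaf extends: the left-hand side becomes the intersection number of the closure of $[(x_K,a_K')]-d[(0_K,a_K')]$ with a suitable extension of a multiple of $P$, and the right-hand side the intersection number of the closure of a $0$-cycle supported by \emph{rational} points with an extension of a multiple of $\mathrm{Nm}_{L/K}(\cP|_{x_K\times_K A_K'})$; the bi-additivity (theorem of the cube on each factor) built into the biextension structure of $\cP$, together with the compatibility of the norm with the schematic-closure operation, should make the two computations agree modulo the relevant denominator. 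The delicate points I anticipate are that $x_K\times_K A_K'$ fails to be geometrically connected over $K$, so the norm has to be handled on the level of invertible sheaves and $1$-codimensional cycles rather than by a direct appeal to the functoriality lemma, and the bookkeeping of inseparable degrees when $K(x_K)/K$ is not separable, for which the computations of subsection~\ref{0} are precisely what is needed.
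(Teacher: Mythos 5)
Your reduction steps (invariance of the pairing mod $\bbZ$ under linear equivalence of the divisor, additivity in $b$ of $\lan [b]-[0_K],P_{a_K'}\ran$ via the theorem of the cube, reduction to a single closed point, and the identification of the class of your norm divisor $P_{[x_K]}$ with $\cS([x_K])$) are all sound and consistent with the constructions of subsection~\ref{0}. But the proof is not complete: everything hinges on the displayed \emph{reciprocity formula} for a nonrational closed point $x_K$, which you explicitly ``grant'' and only sketch a strategy for, and that formula is precisely the entire content of the proposition (your own single-point statement is equivalent to it via the classical reciprocity for the rational points $\cS([x_K])$ and $a_K'$). The route you sketch for it --- intersection theory on a semi-factorial model of $A_K\times_K A_K'$, extending the biextension structure and taking norms at the level of models --- is not carried out, and the difficulties you list (failure of geometric connectedness of $x_K\times_K A_K'$, bookkeeping of inseparable degrees) are real obstacles to making the functoriality lemma and Theorem~\ref{comp} apply there. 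As written, the argument is circular at its core: the hard step is deferred to a lemma whose proof is only a plan.

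The missing idea, which is how the paper proceeds, is to avoid any reciprocity statement for nonrational points altogether by base-changing to a finite extension $L/K$ splitting the support of $c_K$. Writing $c_K\otimes_K L$ as a difference of sums of $L$-rational points $a_{r,\pm}$ and using that N\'eron's pairing with normalized valuations satisfies $\lan c_K,P_{a_K'}\ran_{A_K}=e_L^{-1}\lan c_L,(P_L)_{a_L'}\ran_{A_L}$, one applies the \emph{classical} reciprocity law over $L$ (all points now rational) to get $e_L^{-1}\lan [a_L']-[0_L'],\sum_r (P_L)_{a_{r,+}}-\sum_r(P_L)_{a_{r,-}}\ran_{A_L'}$. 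The divisor appearing there is exactly the pull-back to $L$ of the divisor $P_{c_K}$ on $A_K'$ furnished by Notation~\ref{Not2} --- this is where the Hilbert-scheme/norm constructions of subsection~\ref{0} enter --- and $P_{c_K}-P_{\cS(c_K)}=\div(f)$ is principal over $K$ since the map $A_L(L)\ra\Pic^0_{A_L'/L}(L)$ is a homomorphism. Pairing $\div(f)$ against the $K$-rational cycle $[a_K']-[0_K']$ gives an integer (the normalized valuation on $K$ is $\bbZ$-valued), so modulo $\bbZ$ one may replace $P_{c_K}$ by $P_{\cS(c_K)}$ and apply the classical reciprocity law once more to return to $A_K$. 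If you replace your ``granted'' reciprocity formula by this base-change-and-descend argument, your proof closes up and essentially coincides with the paper's.
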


\begin{proof}
Write $c_K=c_{K}^{+}-c_{K}^{-}$ where $c_{K}^{+}$ and $c_{K}^{-}$ are positive $0$-cycles with disjoint supports. Let $L/K$ be a finite field extension such that
$$c_{K}^{+}\otimes_K L=\sum_{r=1}^d\ [a_{r,+}]\quad\textrm{and}\quad c_{K}^{-}\otimes_K L=\sum_{r=1}^d\ [a_{r,-}]$$
where $d:=\deg c_{K}^{+}=\deg c_{K}^{-}$ and with $a_{r,+}$, $a_{r,-}$ in $A_L(L)$ (repetitions allowed). Computing N\'eron's pairing with normalized valuations, we get
$$\lan c_K\ ,\ P_{a_{K}'}\ran_{A_K}=\frac{1}{e_L}\lan \sum_{r=1}^d\ [a_{r,+}]-\sum_{r=1}^d\ [a_{r,-}]\ ,\ (P_L)_{a_{L}'}\ran_{A_L},$$
where $P_L$ is the pull-back of $P$ over $L$, the point $a_{L}'\in A_{L}'(L)$ is the image of $a_{K}'\in A_{K}'(K)$ by the inclusion $A_{K}'(K)\subseteq A_{L}'(L)$, and $e_L$ is the ramification index of $L/K$. As $(P_L)_{0_{L}'}=0$, the \emph{reciprocity law} for N\'eron's pairing (\cite{La} 11.4.2) \footnote{Here we use the reciprocity law in the case where the divisorial correspondence is the Poincar\'e divisor $P_L$. By using a definition of N\'eron's pairing relying on the Poincar\'e biextension (see \cite{Z} \S 5 or \cite{MT} \S 2), the reciprocity law for $P_L$ is a direct consequence of the \emph{biduality} of abelian varieties.}
asserts that the right-hand side of the equality is equal to
$$\frac{1}{e_L}\lan\ [a_{L}']-[0_{L}']\ ,\ \sum_{r=1}^d(P_L)_{a_{r,+}}-\sum_{r=1}^d(P_L)_{a_{r,-}}\ran_{A_{L}'}.$$
Now, with Notation \ref{Not2}, the divisor $\sum_{r=1}^d(P_L)_{a_{r,+}}-\sum_{r=1}^d(P_L)_{a_{r,-}}$ is precisely the pull-back over $L$ of the divisor $P_{c_K}$ on $A_{K}'$. Furthermore, as the map $$A_{L}(L)\ra\Pic_{A_{L}'/L}^0(L)$$ defined by $\cP$ is a group homomorphism, the divisors $P_{c_K}$ and $P_{S(c_K)}$ are linearly equivalent on $A_{L}'$, and thus on $A_{K}'$ (because, for example, $\Pic_{A_{K}'/K}^0(K)\subseteq \Pic_{A_{K}'/K}^0(L)$). Let $f\in K(A_{K}')$ such that $P_{c_K}-P_{S(c_K)}=\div (f)$. As the normalized valuation on $K$ takes values in $\bbZ$, the (well-defined) pairing
$$\frac{1}{e_L}\lan\ [a_{L}']-[0_{L}']\ ,\ (\div (f))_L \ran_{A_{L}'}=\lan\ [a_{K}']-[0_{K}']\ ,\ \div (f) \ran_{A_{K}'}$$
is an \emph{integer}. Consequently,
$$\lan c_K\ ,\ P_{a_{K}'}\ran_{A_K}\equiv \lan\ [a_{K}']-[0_{K}']\ ,\ P_{\cS(c_K)} \ran_{A_{K}'}\quad\mo\bbZ.$$
As $P_{0_K}=0$ and $P_{0_{K}'}=0$, we conclude by using once again the reciprocity law.
\end{proof}

\begin{Rem} \label{Poincare}
\emph{Fix a point $a_{K}'\in A_{K}'(K)$ and cycles $c_{K,1},\ldots,c_{K,\nu} \in Z_{0}^0(A_K)$. Let us show that there exists a Poincar\'e divisor $P$ on $A_{K}\times_K A_{K}'$  such that $P|_{0_K\times_K A_{K}'}$ and $P|_{A_K\times_K 0_{K}'}$ are well-defined and equal to zero, \emph{and} such that the two conditions on supports in statement \ref{infini} are satisfied for $a_{K}'$, $P$ and $c_{K,i}$ for all $i=1,\ldots,\nu$.}

\emph{Consider the finite set $\cE$ whose elements are the following closed points of the product $A_K\times_K A_{K}'$:}
\begin{enumerate}
\item \emph{$(0_{K},0_{K'})$ and $(0_K,a_{K}')$;}
\item \emph{$a_K\times_K a_{K}'$ if the closed point $a_K\in A_K$ belongs to the support of one of the cycles $c_{K,i}$, or is equal to one of the $\cS(c_{K,1}),\ldots,\cS(c_{K,\nu})$;}
\item \emph{$a_K\times_K 0_{K}'$ if $a_{K}$ is as in 2.}
\end{enumerate}

\emph{Let $\cP$ be a Poincar\'e sheaf on $A_K\times_K A_{K}'$, birigidified along $0_K\in A_K(K)$ and $0_{K}'\in A_{K}'(K)$. Choose an arbitrary divisor $Q$ such that $\cO_{A_{K}\times_KA_{K}'}(Q)\simeq\cP$. Using a moving lemma on the product $A_K\times_K A_{K}'$ if necessary (\cite{Li} 9.1.11), one can assume that the support of $Q$ is disjoint from the finite set $\cE$. As $(0_K,0_{K}')\in\cE$, the divisors $Q|_{0_K\times_K A_{K}'}$ and $Q|_{A_K\times_K 0_{K}'}$ are well-defined, and are principal. Then}
$$P:=Q-p_{2}^{*}(Q|_{0_K\times_K A_{K}'})-p_{1}^{*}(Q|_{A_K\times_K 0_{K}'})$$
\emph{(where $p_1:A_K\times_K A_{K}'\ra A_K$ and $p_2:A_K\times_K A_{K}'\ra A_{K}'$ are the projections) is a Poincar\'e divisor satisfying $P|_{0_K\times_K A_{K}'}=0$ and $P|_{A_K\times_K 0_{K}'}=0$.}

\emph{Now, let $a_K$ be a point of the support of some $c_{K,i}$, or which is equal to some $\cS(c_{K,i})$, and assume that $a_K$ belongs to the support $\Supp(P_{a_{K}'})$ of $P_{a_{K}'}$ (in other words, one of the conditions of Proposition \ref{infini} is not satisfied for some $c_{K,i}$). Then $a_K\times_Ka_{K}'\in\Supp(P)$. But $a_K\times_Ka_{K}'\notin\Supp(Q)$ because $a_K\times_Ka_{K}'\in\cE$. Next $a_K\times_Ka_{K}'\notin\Supp(p_{2}^{*}(Q|_{0_K\times_K A_{K}'}))$, that is $a_{K}'\notin\Supp(Q|_{0_K\times_{K} A_{K}'})$, or equivalently $(0_K,a_{K}')\notin\Supp(Q)$, because $(0_K,a_{K}')\in\cE$. Hence $a_K\times_Ka_{K}'\in\Supp(p_{1}^{*}(Q|_{A_K\times_K 0_{K}'}))$,  $a_K\in\Supp(Q|_{A_K\times_K 0_{K}'})$ and $a_K\times_K 0_{K}'\in\Supp(Q)$. The latter is impossible because $a_K\times_K 0_{K}'\in\cE$. In conclusion, the point $a_K$ is not in the support of $P_{a_{K}'}$, as desired.}
\end{Rem} 

We can now interpret Grothendieck's obstruction (subsection \ref{dg}) in terms of relative algebraic equivalence.

\begin{Th}\label{tradob}
Keep the notation of Theorem \ref{tradconj}. Moreover, let $\Phi_A$ (resp. $\Phi_{A'}$) be group of  connected components of $A_k$ (resp. $A_{k}'$).

Let $a'\in\Phi_{A'}$. Lift $a'$ to a point $a_{K}'\in A_{K}'(K)$, representing the linear equivalence class of a divisor $D_{K}'$ on $A_K$. Then Grothendieck's obstruction
$$\lan\ ,\ a'\ran:\Phi_{A}\ra\bbQ/\bbZ$$
vanishes if and only if $D_{K}'$ can be extended to a $\bbQ$-diviseur on $\overline{A}$ which is $\tau$-equivalent to zero.

In particular, when $\overline{A}$ is locally factorial, the obstruction $\lan\ ,\ a'\ran$ vanishes if and only if $a_{K}'$ can be extended in $\Pic_{\overline{A}/R}^{\tau}(R)$.
\end{Th}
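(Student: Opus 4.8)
The plan is to prove the two implications of the stated equivalence separately. I first observe that both conditions are invariant under replacing $D_{K}'$ by a linearly equivalent divisor: the vanishing of $\lan\ ,\ a'\ran$ does not involve $D_{K}'$, and if a $\bbQ$-divisor on $\overline{A}$ that is $\tau$-equivalent to zero has generic fiber $D_{K}'$, then adding the principal divisor $\div_{\overline{A}}g$ produces one with generic fiber $D_{K}'+\div_{A_{K}}g$. Hence it suffices to prove each implication for a representative $D_{K}'$ of the class $a_{K}'$ chosen at will. The tools are Theorems \ref{comp} and \ref{BL}, Proposition \ref{infini} together with Remark \ref{Poincare}, and Proposition \ref{prop1}.

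For ``$D_{K}'$ extends to a $\bbQ$-divisor $\tau$-equivalent to zero $\Rightarrow\lan\ ,\ a'\ran$ vanishes'', the key point is that the N\'eron model $A$ is smooth over $R$, hence regular, and is an open subscheme of $\overline{A}$, so it lies in the regular locus of $\overline{A}$; therefore any $\bbQ$-divisor $\cD$ on $\overline{A}$, being a $1$-codimensional cycle on $\overline{A}$, restricts on $A$ to a Weil divisor on a regular scheme and is thus \emph{Cartier} there. I would take $D_{K}'$ with $0_{K}\notin\Supp(D_{K}')$, fix $a\in\Phi_{A}$, lift it to $a_{K}\in A(R)=A_{K}(K)$ avoiding $\Supp(D_{K}')\cup\{0_{K}\}$ (possible because the $R$-points reducing into a fixed component of $A_{k}$ are Zariski-dense in $A_{K}$), and pick $n\neq 0$ with $n\cD$ a divisor on $\overline{A}$ that is $\tau$-equivalent to zero. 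Near each of the sections $\overline{a_{K}},\overline{0_{K}}\subseteq A$ a local equation of $n\cD$ is the $n$-th power of a local equation of $\cD|_{A}$, so $\big(\overline{[a_{K}]-[0_{K}]}\,.\,n\cD\big)\in n\bbZ$ and hence $[\,[a_{K}]-[0_{K}]\ ,\ D_{K}'\,]\in\bbZ$ by Definition \ref{p}. By Theorem \ref{comp} this number equals N\'eron's pairing $\lan[a_{K}]-[0_{K}]\ ,\ D_{K}'\ran$, so Theorem \ref{BL} gives $\lan a\ ,\ a'\ran=0$ in $\bbQ/\bbZ$; since $a$ was arbitrary, $\lan\ ,\ a'\ran$ vanishes.

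For the converse, I would apply Proposition \ref{prop1} to $X_{K}=A_{K}$ and $X=\overline{A}$ --- which is semi-factorial since the composite $\Pic(\overline{A})\to\Pic(A)\to\Pic(A_{K})$ is onto, the first map by hypothesis and the second because $A$ is regular --- to obtain $0$-cycles $c_{K,1},\dots,c_{K,\nu}\in Z_{0}^{0}(A_{K})$ whose schematic closures in $\overline{A}$ lie in the regular locus. Then, by Remark \ref{Poincare}, choose a Poincar\'e divisor $P$ on $A_{K}\times_{K}A_{K}'$ with $P|_{0_{K}\times_{K}A_{K}'}=P|_{A_{K}\times_{K}0_{K}'}=0$ such that $(c_{K,i},P_{a_{K}'})$ satisfies the two support hypotheses of Proposition \ref{infini} for every $i$, and take $D_{K}'=P_{a_{K}'}$, which has class $a_{K}'$ under $\Pic_{A_{K}/K}^{0}(K)=A_{K}'(K)$. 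For each $i$: Theorem \ref{comp} gives $[c_{K,i}\ ,\ P_{a_{K}'}]=\lan c_{K,i}\ ,\ P_{a_{K}'}\ran$; Proposition \ref{infini} shows this is $\equiv\lan[\cS(c_{K,i})]-[0_{K}]\ ,\ P_{a_{K}'}\ran\pmod{\bbZ}$; and Theorem \ref{BL}, applied to the \emph{rational} point $\cS(c_{K,i})$ and using that $\lan\ ,\ a'\ran$ vanishes, identifies the latter modulo $\bbZ$ with $\pm\lan\ ,\ a'\ran$ evaluated at the specialization of $\cS(c_{K,i})$, hence with $0$. Thus $[c_{K,i}\ ,\ P_{a_{K}'}]\in\bbZ$ for all $i$, so Proposition \ref{prop1} produces a $\bbQ$-divisor on $\overline{A}$, $\tau$-equivalent to zero, with generic fiber $P_{a_{K}'}=D_{K}'$. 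Finally, when $\overline{A}$ is locally factorial every $\bbQ$-divisor on it is a genuine divisor, and --- using $\Pic^{\tau}=\Pic^{0}$ on the abelian variety $A_{K}$ and the isomorphism $\Pic(\overline{A})\xrightarrow{\sim}\Pic_{\overline{A}/R}(R)$ (valid as $\Pic(R)=0$ and $\overline{A}/R$ has a section) --- extending $D_{K}'$ to a divisor $\tau$-equivalent to zero is the same as lifting $a_{K}'$ into $\Pic_{\overline{A}/R}^{\tau}(R)$ along restriction to the generic fiber; so the general statement specializes to the asserted one.

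The main obstacle is the converse direction: one must check that the \emph{single} Poincar\'e divisor $P$ supplied by Remark \ref{Poincare} simultaneously keeps all the numbers $[c_{K,i}\ ,\ P_{a_{K}'}]$ defined and makes Propositions \ref{infini}, \ref{prop1} and Theorem \ref{BL} applicable, and that the passage between the given $D_{K}'$ and the chosen representatives is handled consistently. The forward implication is, by comparison, essentially immediate once one notices that a $\bbQ$-divisor on $\overline{A}$ restricts to an honest Cartier divisor on the regular N\'eron model $A$.
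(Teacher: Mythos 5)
Your proposal is correct and follows essentially the same route as the paper's proof: the same two implications handled with the same ingredients (Proposition \ref{prop1} and Remark \ref{Poincare} feeding into Proposition \ref{infini}, Theorems \ref{comp} and \ref{BL} for one direction; the observation that the sections $\overline{a_K}$, $\overline{0_K}$ lie in the regular locus of $\overline{A}$ for the other), and the same reduction of the locally factorial case via $\Pic(\overline{A})\simeq\Pic_{\overline{A}/R}(R)$.
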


\begin{proof}
Proposition \ref{prop1} applied with the model $\overline{A}/R$ of $A_K$ provides some $0$-cycles of degree zero $c_{K,1},\ldots,c_{K,\nu}$ on $A_K$. Let $P$ be the Poincar\'e divisor constructed from the point $a_{K}'\in A_{K}'(K)$ and the cycles $c_{K,i}$ in Remark \ref{Poincare}. 

Suppose that the obstruction $\lan\ ,\ a'\ran$ vanishes. As $D_{K}'$ is linearly equivalent to the well-defined divisor $P_{a_{K}'}$, it can be extended to a $\bbQ$-divisor on $\overline{A}$ which is $\tau$-equivalent to zero if and only if the same is true for $P_{a_{K}'}$, thereby we can assume that $D_{K}'= P_{a_{K}'}$. According to Bosch-Lorenzini's Theorem \ref{BL}, we get
$$\lan\ [\cS(c_{K,i})]-[0_K]\ ,\ P_{a_{K}'} \ran\in\bbZ$$
for all $i=1,\ldots,\nu$. Proposition \ref{infini} and Theorem \ref{comp} then imply that
$$[ c_{K,i}\ ,\ P_{a_{K}'} ]\in\bbZ$$
for all $i=1,\ldots,\nu$. Due to the choice of the $c_{K,i}$, the divsior $P_{a_{K}'}$ can then be extended to a $\bbQ$-divisor on $\overline{A}$ which is $\tau$-equivalent to zero.

Conversely, suppose that there is a $\bbQ$-divisor $D'$ on $\overline{A}$ which is $\tau$-equivalent to zero, with generic fiber $D_{K}'$. To prove that $\lan\ ,\ a'\ran=0$, we can assume that $0_K$ does not belong to the support of $D_{K}'$, by adding to $D'$ the divisor of a rational function on $\overline{A}$ if needed. Let $n'$ be a nonzero integer such that $\Delta':=n' D'$ is a divisor on $\overline{A}$ which is $\tau$-equivalent to zero. For each $a_K\in A_{K}(K)$ which is not in the support of $D_{K}'$, we get:
$$[\ [a_K]-[0_K]\ ,\ D_{K}' ]=\frac{1}{n'}\big([\overline{a_K}]-[\overline{0_K}].\Delta'\big)=\big([\overline{a_K}]-[\overline{0_K}].D'\big)\in\bbZ.$$
The first equality holds by definition of the pairing $[\ ,\ ]$, and the second one is true because $[\overline{a_K}]-[\overline{0_K}]$ is contained in the regular locus of $\overline{A}$. Now observe that an element $a\in\Phi_A$ can always be lifted to a point $a_K\in A_K(K)$ which is not in the support of $D_{K}'$. Thus, it follows from Theorem \ref{comp} and Bosch-Lorenzini's Theorem \ref{BL} that the obstruction $\lan\ ,\ a'\ran$ vanishes. 

Assume moreover that $\overline{A}$ is locally factorial, so that any $1$-codimensional cycle on $\overline{A}$ is a divisor. In particular, if the obstruction $\lan\ ,\ a'\ran$ vanishes, we have seen that $a_{K}'$ can be extended in $\Pic_{\overline{A}/R}^{\tau}(R)$. Conversely, suppose that $a_{K}'$ can be extended in $\Pic_{\overline{A}/R}^{\tau}(R)$. As $R$ is strictly henselian, and the sheaf $\Pic_{\overline{A}/R}$ can be defined using the \'etale topology, the group $\Pic_{\overline{A}/R}(R)$ can be identified with $\Pic(\overline{A})$, which in turn can be identified with the group of divisors on $\overline{A}$ modulo the linear equivalence relation (the scheme $\overline{A}$ being normal). As a consequence, there exists a divisor $D'$ on $\overline{A}$ which is $\tau$-equivalent to zero, and whose generic fiber $D_{K}'$ is parametrized by $a_{K}'$, that is, $(D_{K}')=a_{K}'$. Reasoning as above (with $n'=1$), we can conclude that $\lan\ ,\ a'\ran=0$.
\end{proof} 
 
\begin{proof}[Proof of Theorem \ref{tradconj}]
By biduality of abelian varieties, Grothendieck's duality sta\-tement is equivalent to the following: the obtsruction $\lan\ ,\ a'\ran$ vanishes if and only if $a'=0$. So the theorem in the case where $\overline{A}$ is normal semi-factorial is now clear. Assume moreover that $\overline{A}$ is locally factorial. As the special fiber of $\overline{A}/R$ admits at least one irreducible component with multiplicty $1$ (the component containing the unit element of $A_k$), the restriction morphism 
$$\Pic_{\overline{A}/R}^{\tau}(R)\ra \Pic_{A_K/K}^{\tau}(K)=A_{K}'(K)$$
is injective (\cite{R} 6.4.1 3)). On the other hand, the bijection $(A')(R)=A_{K}'(K)$  induces an inclusion
$$(A')^0(R)\subseteq A_{K}'(K).$$
The last assertion of the theorem follows.
\end{proof}

\begin{Rem}
\emph{From the viewpoint of the theory of the Picard functor $\Pic_{\overline{A}/R}$, Theorem \ref{tradconj} is remarkable for two reasons, which come from the properties of the generic fiber $\Pic_{A_K/K}$. To fix the ideas, suppose that $\overline{A}$ is locally factorial, e.g. regular, and cohomologically flat over $R$. As in \cite{P} section 3, denote by $P$ the schematic closure of $A_{K}'$ in the algebraic space $\Pic_{\overline{A}/R}$, and by $\widetilde{P}\ra P$ its group smoothening. Then the N\'eron model $A'$ of $A_{K}'$ is equal to $\widetilde{P}/F$, where $F$ is the schematic closure of the unit section of $A_{K}'$ in $\widetilde{P}$ (\emph{loc. cit.} 3.3). We claim that, when Grothendieck's duality statement is true, the following equalities hold:}
$$(\widetilde{P})^0(R)=P^0(R)=P^{\tau}(R).$$
\emph{Indeed, the group smoothening $\widetilde{P}\ra P$ always induces a bijection on $R$-points, hence an injection $(\widetilde{P})^0(R)\subseteq P^0(R)$. Next, by definition, $P^0(R)\subseteq P^{\tau}(R)$. Now, let $a'\in P^{\tau}(R)$. The generic fiber $a_{K}'$ of $a'$ is an element of $A_{K}'(K)$, which extends to a section $b'\in A'(R)$. When Grothendieck's duality holds, Theorem \ref{tradconj} shows that $b'\in (A')^0(R)$. Then $b'$ can be lifted to a point $c'\in (\widetilde{P})^0(R)$ (\emph{loc. cit.}  3.6). As $c_{K}'=b_{K}'=a_{K}'$ and $a'-c'\in P^{\tau}(R)$, we get $a'-c'=0$ (\cite{R} 6.4.1 3), whence $a'\in (\widetilde{P})^0(R)$.}

\emph{The first equality $(\widetilde{P})^0(R)=P^0(R)$ says that the group smoothening $\widetilde{P}\ra P$
induces an \emph{injection} $\Phi_{\widetilde{P}}\ra\Phi_{P}$ between the groups of connected components of the special fibers, a fact which is not true for a general group smoothening $\widetilde{G}\ra G$.}

\emph{The second equality $P^0(R)=P^{\tau}(R)$ says that the group $P(R)/P^0(R)$ has no torsion. When $P$ is smooth over $R$, e.g. the characteristic of $k$ is zero, this precisely means that the N\'eron-S\'everi group of the special fiber $\overline{A}_k$ has \emph{no torsion}. Note that it is well-known that the N\'eron-S\'everi group of a curve or an abelian variety is free, but in general the scheme $\overline{A}_k$ is not of this type.}
\end{Rem}

\section{Some calculation for Jacobians} \label{section3}

Let $R$ be a complete discrete valuation ring with algebraically closed residue field $k$ and fraction field $K$. Let $X_K$ be a proper smooth geometrically connected curve over $K$, and let $J_K:=\Pic_{X_K/K}^0$ be its Jacobian. Denote by $J$ (resp. $J'$) the N\'eron model of $J_K$ (resp. $J_{K}'$) over $R$, and $\Phi_J$ (resp. $\Phi_{J'}$) the group of connected components of the special fiber of $J/S$ (resp. $J'/S$). Theorems \ref{BL} and \ref{comp} describe Grothendieck's pairing associated to $J_K$ in terms of intersection multiplicities on some compactification $\overline{J}$ of $J$. It is natural to  wonder if these computations can be replace by intersection computations \emph{on a proper flat regular model $X$ of $X_K$.}

Assume that $X_K(K)$ is nonempty. In this case, the curve $X_K$ can be \emph{embedded into $J_K$}, and  can be used to define a classical theta divisor on $J_K$. Then, using Theorem \ref{BL}, Bosch and Lorenzini described Grothendieck's pairing associated to $J_K$ in terms of the N\'eron pairing \emph{on $X_K$}, and so in terms of intersection multiplicities on $X$, thanks to Gross's and Hriljac's Theorems \cite{G} and \cite{H}. Their precise result is as follows. Let $M$ be the intersection matrix of the special fiber of $X/R$: if $\Gamma_1,\ldots,\Gamma_\nu$ are the irreducible components of $X_k$ equipped with their reduced scheme structure, the $(i,j)$\textsuperscript{th} entry of $M$ is the intersection number $(\Gamma_i\cdot\Gamma_j)$. Denote by $\Phi_M$ the torsion part of the cokernel of $M:\bbZ^{\nu}\ra\bbZ^{\nu}$. According to Raynaud's work on the sheaf $\Pic_{X/S}$, there is a canonical isomorphism $\Phi_J=\Phi_M$ (see \cite{BLR} 9.6/1). Now, on the product $\Phi_M\times\Phi_M$, there is the canonical pairing 
\begin{eqnarray*}
\lan\ ,\ \ran_M:\Phi_M\times\Phi_M & \ra &\bbQ/\bbZ \\
(\overline{T},\overline{T'}) &\mapsto & ({}^{t}S/n)M(S'/n')\quad \mo\bbZ
\end{eqnarray*}
for any $n,n'\in\bbZ\setminus\{0\}$ and $S,S'\in\bbZ^{\nu}$ such that $MS=nT$, $MS'=n'T'$. Now let $(a,a')\in\Phi_J\times\Phi_{J'}$. By identifying $J_K$ and $J_{K}'$ with the help of the opposite of the canonical principal polarization defined by a theta divisor, Grothendieck's pairing of $a$ and $a'$ can be computed by the formula $$\lan a\ ,\ a'\ran=\lan a\ ,\ a'\ran_M$$ (\cite{BL} Theorem 4.6).

Now assume that $X_K(K)$ is empty. Extending $K$, it is possible to consider a theta divisor on $J_K$ as above, and it is classical that the associated canonical principal polarization descends over $K$. Using its opposite, one can still identify $\Phi_{J}$ with $\Phi_{J'}$, and thus $\Phi_{J'}$ with $\Phi_M$ (as $k$ is algebraically closed, the identification $\Phi_J=\Phi_M$ holds without assuming that $X_K(K)$ is nonempty). Then the authors of \cite{BL} ask if both pairings $\lan\ ,\ \ran$ and $\lan\ ,\ \ran_M$ still coincide in this situation (\emph{loc. cit.} Remark 4.9). In \cite{Lor} Theorem 3.4, Lorenzini gives a positive answer to this question when the special fiber of $X/R$ admits two irreducible components $C_{i}$ and $C_{j}$ with multiplicities $d_i$ and $d_j$ such that $(C_{i}\cdot C_{j})>0$ and $\gcd(d_{i},d_{j})=1$. Here we show that this result still holds if we only assume that \emph{the global $\gcd$ of the multiplicities of the irreducible components of $X_k$ is equal to $1$}. Note that, due to the hypothesis on $R$ and on $X$, this global $\gcd$ coincide with the \emph{index} of the curve $X_K$, that is, the smallest positive degree of a divisor on $X_K$ (\cite{R} 7.1.6 1)). 

\begin{Prop} \label{dGP}
Let $R$ be a complete discrete valuation ring with algebraically closed residue field $k$ and fraction field $K$. Let $X_K$ be a proper smooth geometrically connected curve over $K$, with index $d$. Let $J_K$ be the Jacobian of $X_K$, identified with its dual using the opposite of its canonical principal polarization. Let $X/R$ be a proper flat regular model of $X_K$. The following relation between Grothendieck's pairing for $J_K$ and the above pairing defined by the intersection matrix $M$ of $X_k$ is true:
$$d\lan a\ ,\ a'\ran=d\lan a\ ,\ a'\ran_M.$$
\end{Prop}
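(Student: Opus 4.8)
The plan is to reduce the statement to Bosch--Lorenzini's Theorem~\ref{BL} and to the intersection-theoretic description of N\'eron's pairing on a curve (Theorem~\ref{comp} applied to the regular model $X$), transporting the computation from the abelian variety $J_K$ to $X_K$ along a $K$-morphism $j\colon X_K\to J_K$ which is available precisely because $X_K$ has index $d$. Keep the notation of the statement: $\Gamma_1,\dots,\Gamma_\nu$ are the reduced components of $X_k$, $d_i$ is the multiplicity of $\Gamma_i$, so $\pgcd(d_i)=d$ by \cite{R} 7.1.6 1), $M$ is the intersection matrix, and $\rho$ denotes the vector of degrees along the $\Gamma_i$. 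We identify $\Phi_J$ with $\Phi_M=$ torsion of $\operatorname{coker}M$, so that $\Phi_J\cong L/M\bbZ^\nu$ with $L=\{v\in\bbZ^\nu:\sum d_iv_i=0\}$, a point of $J_K(K)=\Pic^0_{X_K/K}(K)$ represented by a divisor $D_K$ specialising to the class of $\rho(\overline{D_K})$. Running the construction in the proof of Proposition~\ref{prop1} over $X$ provides closed points $x_{K,i}$ of $X_K$ of degree $d_i$ with $\rho(\overline{x_{K,i}})=e_i$, a $0$-cycle $c_K$ of degree $d$ on $X_K$, and cycles $c_{K,i}:=[x_{K,i}]-(d_i/d)c_K\in Z_0^0(X_K)$; set $m:=\rho(\overline{c_K})$, $\delta_i:=d_i/d\in\bbZ$, so $\sum\delta_im_i=1$. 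Then $f_i:=\mathrm{sp}([c_{K,i}])=\overline{e_i-\delta_im}\in\Phi_M$, and one checks (using $\sum\delta_im_i=1$, e.g.\ via $\bbZ^\nu=L\oplus\bbZ m$, under which $e_i-\delta_im$ is the $L$-component of $e_i$) that the $f_i$ generate $\Phi_M=\Phi_J$. Both pairings being bilinear, and $J_K$ being identified with $J_K'$ through the opposite of the canonical principal polarization $\lambda_\Theta$ attached to a theta divisor $\Theta$, it suffices to prove that $\langle\ ,\ \rangle$ and $\langle\ ,\ \rangle_M$ agree on $d\Phi_J\times\Phi_{J'}$, hence (by bilinearity, $d\Phi_J$ being generated by the $df_i$ and $\Phi_{J'}$ by the $(-\lambda_\Theta)(f_\ell)$) that for all $i,\ell$
$$\bigl\langle\,df_i\,,\,(-\lambda_\Theta)(f_\ell)\,\bigr\rangle\;=\;\bigl\langle\,df_i\,,\,(-\lambda_\Theta)(f_\ell)\,\bigr\rangle_M \qquad\text{in }\bbQ/\bbZ ,$$
where $df_i$ has the canonical lift $d[c_{K,i}]\in J_K(K)$ and $(-\lambda_\Theta)(f_\ell)$ the lift $-\lambda_\Theta([c_{K,\ell}])\in J_K'(K)$, represented by the divisor $D':=\Theta-\Theta_{[c_{K,\ell}]}$ on $J_K$.

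For the morphism: since $X_K$ has index $d$, fix a divisor $\mathfrak{d}$ of degree $d$ on $X_K$; composing the Abel--Jacobi morphism $\iota'\colon X_K\to\Pic^1_{X_K/K}$, the $d$-th power map $\Pic^1_{X_K/K}\to\Pic^d_{X_K/K}$ and the trivialisation $\Pic^d_{X_K/K}\to\Pic^0_{X_K/K}=J_K$ defined by $\mathfrak{d}$ gives a $K$-morphism $j\colon X_K\to J_K$, $x\mapsto[\cO_{X_K}(dx-\mathfrak{d})]$. Two computations will be used: (i) by the explicit description of the Grothendieck--Deligne norm map in subsection~\ref{0}, after base change to $\overline{K}$ and using $\sum\delta_im_i=1$, one gets $\mathcal{S}_{J_K}(j_*c_{K,i})=d\,[c_{K,i}]$ in $J_K(K)$; (ii) the pull-back $(\iota')^*\colon\Pic^0_{J_K/K}\to\Pic^0_{X_K/K}$ (using that $\Pic^0$ of the torsor $\Pic^1_{X_K/K}$ is canonically $\Pic^0_{J_K/K}$) equals $-\lambda_\Theta^{-1}$, while $d$-th power pulls back to multiplication by $d$ on $\Pic^0_{J_K/K}$; hence $j^*$ restricted to $\Pic^0_{J_K/K}$ is $-d\lambda_\Theta^{-1}$, so for $y\in J_K(K)$ the divisor $j^*(\Theta-\Theta_y)$ on $X_K$ has class $d\,y$ in $\Pic^0_{X_K/K}(K)=J_K(K)$; in particular $j^*D'$ is linearly equivalent to $d\,c_{K,\ell}$.

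Now the computation. By Theorem~\ref{BL}, $\langle df_i,(-\lambda_\Theta)(f_\ell)\rangle\equiv-\langle[d[c_{K,i}]]-[0_{J_K}],D'\rangle_{J_K}\pmod{\bbZ}$. By (i) and Proposition~\ref{infini} --- applied after replacing $D'$ by a linearly equivalent restriction $P_{a_K'}$ of a Poincar\'e divisor chosen, via Remark~\ref{Poincare}, so that all supports in play are disjoint, a replacement which changes the pairing only by an integer since the cycle has degree zero --- we get $\langle[d[c_{K,i}]]-[0_{J_K}],D'\rangle_{J_K}\equiv\langle j_*c_{K,i},D'\rangle_{J_K}\pmod{\bbZ}$. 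By functoriality of N\'eron's pairing (which for $[\ ,\ ]$ is Lemma~\ref{functoriality}, hence holds for $\langle\ ,\ \rangle$ by Theorem~\ref{comp}; cf.\ condition~$2$ of Lemma~\ref{carac} and the proof of Theorem~\ref{comp}), $\langle j_*c_{K,i},D'\rangle_{J_K}=\langle c_{K,i},j^*D'\rangle_{X_K}$, and by (ii), since modifying a degree-zero divisor on $X_K$ within its linear equivalence class changes its N\'eron pairing with a fixed $0$-cycle of degree zero only by an integer, $\langle c_{K,i},j^*D'\rangle_{X_K}\equiv d\,\langle c_{K,i},c_{K,\ell}\rangle_{X_K}\pmod{\bbZ}$. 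Altogether $\langle df_i,(-\lambda_\Theta)(f_\ell)\rangle\equiv-d\,\langle c_{K,i},c_{K,\ell}\rangle_{X_K}\pmod{\bbZ}$. On the other hand $\mathrm{sp}(c_{K,i})=f_i$ and $\mathrm{sp}(c_{K,\ell})=f_\ell$, and the description of N\'eron's pairing on $X_K$ via the intersection matrix $M$ recalled in the introduction (equivalently Theorem~\ref{comp} on the regular model $X$, and the computation $(\overline{c_{K,i}}.(-V))=-{}^t\rho(\overline{c_{K,i}})V$ with $\rho(\overline{c_{K,\ell}})=MV$) gives $\langle c_{K,i},c_{K,\ell}\rangle_{X_K}\equiv-\langle f_i,f_\ell\rangle_M\pmod{\bbZ}$ --- the sign being the ``$-V$'' of the Gross--Hriljac formula --- so that $\langle df_i,(-\lambda_\Theta)(f_\ell)\rangle_M=\langle df_i,f_\ell\rangle_M=d\langle f_i,f_\ell\rangle_M\equiv-d\,\langle c_{K,i},c_{K,\ell}\rangle_{X_K}\pmod{\bbZ}$ as well. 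Comparing the two congruences yields the displayed equality, and hence the proposition.

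The main obstacle is the bookkeeping of the factor $d$: $j$ behaves like multiplication by $d$, so pushing a cycle forward and pulling a divisor back along $j$ each contribute one factor $d$, and a careless combination yields only the weaker $d^{2}\langle\ ,\ \rangle=d^{2}\langle\ ,\ \rangle_M$. The point is to pair $c_{K,i}$ itself --- not $d\,c_{K,i}$ --- with $j^*D'$, using $\mathcal{S}_{J_K}(j_*c_{K,i})=d[c_{K,i}]$ so that the single $d$ produced by $j$ on the cycle side supplies exactly the lift $d[c_{K,i}]$ of $df_i\in d\Phi_J$ required by Theorem~\ref{BL}, the remaining $d$ from $j^*$ on the divisor side being absorbed against the intersection-matrix pairing; the signs match precisely because of the use of the \emph{opposite} of the canonical polarization together with $(\iota')^*\lambda_\Theta=-\mathrm{id}$. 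The subsidiary issues are routine: arranging disjointness of supports throughout, via moving divisors within linear equivalence classes (which alters the pairings only by integers) and the freedom in the choice of Poincar\'e divisor (Remark~\ref{Poincare}), and keeping the sign conventions consistent. When $d=1$, e.g.\ when $X_K$ has a rational point, the same argument produces $\langle\ ,\ \rangle=\langle\ ,\ \rangle_M$ with no factor, recovering \cite{BL} 4.6.
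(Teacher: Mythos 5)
Your argument is correct in substance, but it follows a genuinely different route from the paper's. The paper lifts an arbitrary pair $(a,a')$ to $K$-points of $J_K$, represents $a$ by a divisor $D(a)_K$ on $X_K$, pushes it forward along the map $h:X_K\ra J_K$, $x\mapsto (d[x]-E)$ (your $j$), and then applies the reciprocity law for N\'eron's pairing on $J$ \emph{twice} (with the divisorial correspondence attached to $\Theta$, together with the theorem of the square) to shuffle the factor $d$ from one argument to the other, obtaining $\lan a\ ,\ da'\ran_M=\lan da\ ,\ a'\ran$ directly from Lemma \ref{M}, Lemma \ref{reld} and Theorem \ref{BL}; it never uses Proposition \ref{prop1} nor the norm map $\cS$. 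You instead reduce by bilinearity to the generators $f_i$ of $\Phi_J$ coming from the construction of Proposition \ref{prop1} (cycles supported at nonrational points of degree $d_i$; your verification that the $e_i-\delta_i m$ generate $L/M\bbZ^\nu$ is correct), and you replace the double reciprocity argument by a single appeal to Proposition \ref{infini} via the identity $\cS(j_*c_{K,i})=d[c_{K,i}]$ (which does hold), finishing with functoriality, the pull-back formula of Lemma \ref{reld}, and Lemma \ref{M} (which you rederive from Theorem \ref{comp} and Gross--Hriljac). This variant is legitimate and illuminating: it recycles the machinery of section \ref{section2}, and it makes transparent why only one factor of $d$ is lost -- the degree-$d_i$ points supply the rational lift $d[c_{K,i}]$ of $df_i$ on the cycle side, so only the $d$ produced by $j^*$ on the divisor side survives. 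The reciprocity law is still the engine, but it is hidden inside the proof of Proposition \ref{infini} rather than invoked explicitly.

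Two points need tightening. First, your sign conventions are not mutually consistent: with the normalization the paper fixes ($\varphi(a)=-(\Theta_a-\Theta)$ is the canonical polarization, and $h^*(\Theta_z-\Theta)=d(Z)$ in Lemma \ref{reld}), you cannot have both ``$-\lambda_\Theta([c_{K,\ell}])$ is represented by $\Theta-\Theta_{[c_{K,\ell}]}$'' and ``$(\iota')^*\lambda_\Theta=-\mathrm{id}$, hence $j^*(\Theta-\Theta_y)\sim d\,y$''; exactly one of the two holds, according to whether your $\lambda_\Theta$ is $y\mapsto[\Theta_y-\Theta]$ or $y\mapsto[t_y^*\Theta-\Theta]$. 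The two slips cancel, so your final congruences are correct, but since the statement changes sign if the identification $J_K\simeq J_K'$ is replaced by its negative, you should pin the conventions down explicitly as the paper does. Second, $\lan c_{K,i}\ ,\ c_{K,\ell}\ran_{X_K}$ is not literally defined (the supports meet, and coincide when $i=\ell$); that step must be phrased as pairing $c_{K,i}$ with a divisor of disjoint support in the class $d[c_{K,\ell}]$ and then applying Lemma \ref{M} -- this is the routine moving you allude to, but it should be said.
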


As $\lan \ ,\ \ran_M$ is a perfect duality (\cite{BL} Theorem 1.3), we get the following partial answer to Grothendieck's conjecture \cite{SGA7} IX 1.3 in this case:  
\begin{Cor}
Keep the notation of Proposition \ref{dGP}. The kernel of Grothendieck's pairing for $J_K$ is killed by $d$.
\end{Cor}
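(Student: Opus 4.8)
The Corollary follows at once from Proposition~\ref{dGP}: since $\lan\ ,\ \ran_M$ is a perfect pairing on $\Phi_M\times\Phi_M$ (\cite{BL} Theorem 1.3) and $\Phi_J=\Phi_{J'}=\Phi_M$, an element $a\in\Phi_J$ lying in the kernel of Grothendieck's pairing satisfies $\lan da\ ,\ a'\ran_M=d\lan a\ ,\ a'\ran_M=d\lan a\ ,\ a'\ran=0$ for every $a'\in\Phi_{J'}$, whence $da=0$. So the content is Proposition~\ref{dGP}, and the plan for it is the following.

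The strategy is to rephrase both pairings through N\'eron's pairing, and then to reduce --- modulo $\bbZ$ --- to Bosch--Lorenzini's Theorem \cite{BL} 4.6 applied over suitable finite extensions of $K$, assembling the outcomes by a $\pgcd$ argument. On the Grothendieck side, Theorem~\ref{BL} gives: if $a_K\in J_K(K)$ specializes to $a$ and $D_{K}'\in\Div^0(J_K)$ has class specializing to $a'$, then $\lan a\ ,\ a'\ran\equiv-\lan [a_K]-[0_K]\ ,\ D_{K}'\ran_{J_K}\bmod\bbZ$. On the $M$ side, because $R$ is complete with algebraically closed residue field the field $K$ is $C_1$ (Lang), so $\Pic^0(X_K)=J_K(K)$ and one may represent $a$ and $a'$ (under the identification $J_K=J_{K}'$) by divisors $c_K,c_{K}'\in\Div^0(X_K)$ on the curve itself; then the Gross--Hriljac formula recalled in the introduction, the symmetry of the intersection matrix $M$ (used as in the proof of Proposition~\ref{equivrat}), the isomorphism $\Phi_J=\Phi_M$ of \cite{BLR} 9.6/1, and Theorem~\ref{comp} together give $\lan a\ ,\ a'\ran_M\equiv-\lan c_K\ ,\ c_{K}'\ran_{X_K}\bmod\bbZ$. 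These two congruences are what the base change will transport.

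Now I would base change. As in the proof of Proposition~\ref{prop1}, \cite{R} 7.1.2 provides, for each irreducible component $\Gamma_i$ of $X_k$ of multiplicity $d_i$, a closed point $x_{K,i}$ of $X_K$ of degree $d_i$ lying in the smooth locus of $X/R$. Put $L=K(x_{K,i})$: then $R_L$ is again a complete discrete valuation ring with residue field $k$, the extension $L/K$ is totally ramified of degree $d_i$, and $X_L(L)\neq\emptyset$, so Bosch--Lorenzini's Theorem \cite{BL} 4.6 applies over $L$ and yields $\lan a_L\ ,\ a_L'\ran=\lan a_L\ ,\ a_L'\ran_{M_L}$ for $J_L$, where $M_L$ is the intersection matrix of the minimal regular model of $X_L/R_L$ and $a_L,a_L'$ are the images of $a,a'$ under the functorial maps $\Phi_J\to\Phi_{J_L}$, $\Phi_{J'}\to\Phi_{J_L'}$. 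Since N\'eron's pairing is computed with the normalized valuations and $L/K$ is totally ramified of degree $d_i$, N\'eron's pairing over $L$ equals $d_i$ times the one over $K$ on arguments defined over $K$; combining this with the two congruences of the previous paragraph, written over both $K$ and $L$ (via Theorems~\ref{BL} and~\ref{comp}), gives $\lan a_L\ ,\ a_L'\ran=d_i\lan a\ ,\ a'\ran$ and $\lan a_L\ ,\ a_L'\ran_{M_L}=d_i\lan a\ ,\ a'\ran_M$ in $\bbQ/\bbZ$. Therefore $d_i\lan a\ ,\ a'\ran=d_i\lan a\ ,\ a'\ran_M$ for every $i$. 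Finally, $d=\pgcd(d_1,\dots,d_\nu)$ is the index of $X_K$ (\cite{R} 7.1.6 1)), so B\'ezout yields $m_i\in\bbZ$ with $\sum_i m_id_i=d$; multiplying the $i$-th identity by $m_i$ and summing gives $d\lan a\ ,\ a'\ran=d\lan a\ ,\ a'\ran_M$.

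The main point --- and the reason the argument produces the statement only after multiplication by $d$ --- is that one cannot in general find a single finite extension $L/K$ with $X_L(L)\neq\emptyset$ and $[L:K]=d$: the ramification indices realizable this way are the degrees $d_i$, whose $\pgcd$ is $d$ but which need not include $d$ itself (for instance the $d_i$ could all be proper multiples of $d$). When $d=1$ the method does give the full equality $\lan\ ,\ \ran=\lan\ ,\ \ran_M$, recovering in particular Lorenzini's \cite{Lor} 3.4; and when moreover $X_K(K)\neq\emptyset$ some $d_i=1$, so $L=K$ and the base-change step is vacuous, recovering \cite{BL} 4.6. The remaining work is routine bookkeeping: checking that N\'eron's pairing really rescales by the ramification index of $L/K$ (which uses the completeness of $R$ and the algebraic closedness of $k$, so that $[L:K]$ equals this index), and that the functorial and polarization identifications of component groups over $K$ and over $L$ are mutually compatible.
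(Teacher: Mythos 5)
Your deduction of the Corollary is correct and is exactly the paper's: granting Proposition \ref{dGP}, an element $a$ in the kernel of Grothendieck's pairing satisfies $\lan da\ ,\ a'\ran_M=d\lan a\ ,\ a'\ran=0$ for all $a'$, and the perfectness of $\lan\ ,\ \ran_M$ (\cite{BL} Theorem 1.3) forces $da=0$.

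Since you went on to sketch Proposition \ref{dGP} itself, a comparison is worth recording, because your route is genuinely different from the paper's. The paper stays over $K$: it builds a theta divisor $\Theta$ from a divisor $E$ of degree $d$ equal to the index, uses the morphism $h\colon x\mapsto (d[x]-E)$ and Lemma \ref{reld} (so the factor $d$ enters through $h^{*}(\Theta_z-\Theta)=d(Z)$), and then combines Lemma \ref{M}, the reciprocity law, the theorem of the square and Theorem \ref{BL}; the auxiliary extension $L$ there only splits a $0$-cycle into rational points. Your route instead passes to the totally ramified extensions $L_i=K(x_{K,i})$ of degree $d_i$ furnished by \cite{R} 7.1.2, where $X_{L_i}$ acquires a rational point, applies \cite{BL} 4.6 over $L_i$, transports both sides back using the scaling of N\'eron's pairing by the ramification index (via Theorem \ref{BL} and the Lemma \ref{M} argument run over both $K$ and $L_i$), and assembles the congruences $d_i\lan a\ ,\ a'\ran=d_i\lan a\ ,\ a'\ran_M$ by B\'ezout, the factor $d$ entering as $\pgcd(d_i)$. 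This is viable, but the burden it carries, and which the paper's argument avoids, is precisely the base-change bookkeeping you defer: compatibility of the specialization maps $\Phi_J\ra\Phi_{J_{L_i}}$ and $\Phi_{J'}\ra\Phi_{J'_{L_i}}$ with Raynaud's identification $\Phi_{J_{L_i}}=\Phi_{M_{L_i}}$ for a new regular model over $R_{L_i}$ and with the canonical polarization, together with the fact that Grothendieck's pairing over $L_i$ restricts on these images to $d_i$ times the pairing over $K$; this is exactly the delicate territory of \cite{Lor}, although, as you indicate, it can be reached by rerunning Theorem \ref{BL} and Lemma \ref{M} over both fields rather than by invoking an abstract base-change formula. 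What the paper's argument buys is uniformity (no regular models over extensions, no gcd assembly); what yours buys is that it uses \cite{BL} 4.6 in the pointed case as a black box and makes transparent why only $d$ times the equality is obtained.
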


\begin{Rem}
\emph{Using \cite{BB} Theorem 2.1 together with the argument of the proof of \emph{loc.cit.} Theorem 4.1, and then applying Theorem 4.7 of \cite{BL}, it is possible to see that the kernel of Grothendieck's pairing for $J_K$ is killed by $d^2$.}
\end{Rem}

Here are two lemmas to prepare the proof of Proposition \ref{dGP}.

Recall that, as $R$ is complete with algebraically closed residue field, a classical result of Lang asserts that the Brauer group of $K$ is zero, whence $\Pic^0(X_K)=J_K(K)$.
\begin{Lem} \label{M}
Let $a,a'\in\Phi_{J}=\Phi_M$, and choose divisors $D_K$, $D_{K}'$ on $X_K$ with disjoint supports, such that $a_K:=(D_K)$, $a_{K}':=(D_{K}')\in J_K(K)=\Pic^0(X_K)$ specialize to $a$, $a'$. The  relationship between the pairing $\lan\ ,\ \ran_M$ and N\'eron's pairing on $X_K$ is given by:
$$\lan a,\ a'\ran_M=-\lan D_K,\ D_{K}'\ran\quad \mo\bbZ.$$
\end{Lem}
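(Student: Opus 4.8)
The plan is to relate the intersection-theoretic pairing $\lan\ ,\ \ran_M$ to the pairing $[\ ,\ ]$ of subsection \ref{SNMI} on the regular model $X$, and then invoke Theorem \ref{comp} to pass from $[\ ,\ ]$ to N\'eron's pairing $\lan\ ,\ \ran$. First I would recall, following Raynaud's description of $\Pic_{X/R}$ (as in \cite{BLR} 9.6/1), that the isomorphism $\Phi_J=\Phi_M$ sends the class of $a_K=(D_K)\in\Pic^0(X_K)$ to the class in $\Coker(M)$ of the vector $\rho(\overline{D_K})=(\overline{D_K}\cdot\Gamma_i)_{i}\in\bbZ^{\nu}$, where $\overline{D_K}$ is the closure of $D_K$ in $X$. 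Thus, if $S\in\bbZ^{\nu}$ and $n\in\bbZ\setminus\{0\}$ satisfy $MS=n\,\rho(\overline{D_K})$, then by definition of the pairing $\lan\ ,\ \ran_M$ we have $\lan a,a'\ran_M=({}^{t}S/n)M(S'/n')\pmod\bbZ$ for the analogous data $S',n'$ attached to $D_{K}'$.

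Next I would identify $S/n$ with the $\bbQ$-divisor $V$ on $X$ (supported on $X_k$) such that $\rho(V)=\rho(\overline{D_K})$; concretely, $nV$ is a vertical divisor and, since $M$ is the intersection matrix, $\rho(nV)=MS=n\rho(\overline{D_K})$. Because the special fiber $X_k=\sum d_i\Gamma_i$ is a principal divisor, $\rho(X_k)=0$, so $V$ is well-defined modulo $\bbQ\cdot X_k$, and the class of $V$ in $\bbQ^{\nu}/\mathrm{Im}(M)$ is exactly what pairs to give $\lan a,a'\ran_M$. The divisor $\overline{D_K}-V$ then has trivial $\rho$, hence (by \cite{BLR} 9.2/13, the same criterion used in the proof of Proposition \ref{equivrat}) is $\tau$-equivalent to zero on $X$; call it $\Delta$, so $\Delta_K=D_K$ and we may take $n=1$ in Definition \ref{p}. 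Therefore $[D_{K}',D_K]=(\overline{D_{K}'}\cdot\Delta)=(\overline{D_{K}'}\cdot\overline{D_K})-(\overline{D_{K}'}\cdot V)$. Since $D_K$ and $D_{K}'$ have disjoint supports on $X_K$, the term $(\overline{D_{K}'}\cdot\overline{D_K})$ is an integer; and $(\overline{D_{K}'}\cdot V)={}^{t}\rho(\overline{D_{K}'})(S/n)$, while ${}^{t}\rho(\overline{D_{K}'})S = {}^{t}(MS'/n')S = ({}^{t}S'/n')MS = n\,({}^{t}S'/n')M(S/?)$— here I must be careful with the normalization, but using the symmetry of $M$ one gets $(\overline{D_{K}'}\cdot V)\equiv {}^{t}(S'/n')M(S/n)=\lan a,a'\ran_M\pmod\bbZ$. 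Hence $[D_{K}',D_K]\equiv -\lan a,a'\ran_M\pmod\bbZ$.

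Finally I would invoke Theorem \ref{comp} to replace $[\ ,\ ]$ by N\'eron's pairing $\lan\ ,\ \ran$ on the smooth curve $X_K$, and the known symmetry of N\'eron's pairing on a curve (\cite{La} 11.3.6 and 11.3.7) to rewrite $\lan D_{K}',D_K\ran=\lan D_K,D_{K}'\ran$, yielding $\lan a,a'\ran_M=-\lan D_K,D_{K}'\ran\pmod\bbZ$ as claimed. The main obstacle I anticipate is bookkeeping the normalizations in the chain of identifications $\Phi_J=\Phi_M$, $S/n\leftrightarrow V$, and the pairing formula — in particular making sure the factor coming from the multiplicities $d_i$ (equivalently, the passage between $\Coker(M)$ and $\Pic(X)/\Pic(X_K)$) is handled correctly so that the displayed congruence holds on the nose modulo $\bbZ$; and checking that the support-disjointness hypothesis genuinely forces $(\overline{D_K}\cdot\overline{D_{K}'})\in\bbZ$ rather than merely in $\bbQ$, which it does since the horizontal parts meet only in the closed fiber over finitely many closed points with finite residue extensions of $k$, all of which contribute integers.
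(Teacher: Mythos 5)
Your proof is correct and follows essentially the same route as the paper: Raynaud's identification of $a$ with the class of $\rho(\overline{D_K})$ in $\Coker(M)$, extension of one of the two divisors to a divisor on the regular model $X$ which is $\tau$-equivalent to zero by subtracting a vertical combination read off from $M$ (via \cite{BLR} 9.2/13), integrality of the horizontal--horizontal intersection $(\overline{D_K}\cdot\overline{D_{K}'})$, and then Theorem \ref{comp}. The only minor differences are that the paper extends $D_{K}'$ and computes $[D_K\ ,\ D_{K}']$ directly, so it never needs your final appeal to the symmetry of N\'eron's pairing on curves (a standard fact the paper itself cites after Proposition \ref{equivrat}, so this is harmless), and that to match Definition \ref{p} literally you should use the pair $(n,\ n\overline{D_K}-S)$ rather than the $\bbQ$-divisor $\overline{D_K}-V$ with ``$n=1$'', which changes nothing in the computation.
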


\begin{proof}
This is an immediate consequence of the definitions, and of the description of N\'eron's pairing for the curve $X_K$ in terms of intersection multiplicities on $X$. Indeed, let $\rho:\Pic(X)\ra\bbZ^{\nu}$ be the degree morphism $(Z)\mapsto (Z\cdot\Gamma_i)_{i=1,\ldots,\nu}$. Denote by $\overline{D_K}$ the schematic closure of $D_K$ in $X$. By definition of Raynaud's isomorphism $\Phi_J=\Phi_M$, the image of $\rho(\overline{D_K})\in\bbZ^{\nu}$ in $\bbZ^{\nu}/\Ima M$ is contained in the torsion part $\Phi_M$, and the resulting element is precisely the image of $a\in\Phi_J$ under the isomorphism. In particular, there are $n,n'\in\bbZ\setminus\{0\}$ and $S,S'\in\bbZ^{\nu}$ such that $MS=n\rho(\overline{D_K})$, $MS'=n'\rho(\overline{D_{K}'})$, and by definition of the symmetric pairing $\lan\ ,\ \ran_M$, we get
$$\lan a,\ a'\ran_M=({}^{t}S'/n')\rho(\overline{D_{K}})\quad \mo\bbZ.$$
Under the identification $\oplus_{i=1}^{\nu}\bbZ \Gamma_{i}\simeq\bbZ^{\nu}$, the right-hand side can also be written as an intersection multiplicity:
\begin{displaymath}
\lan a,\ a'\ran_M =\frac{1}{n'}(\overline{D_{K}}.S')=-\frac{1}{n'}\big(\overline{D_{K}}.(n'\overline{D_{K}'}-S')\big)\quad\in\bbQ/\bbZ.
\end{displaymath}
Now, the equality $MS'=n'\rho(\overline{D_{K}'})$ means that the divisor $n'\overline{D_{K}'}-S'$ is algebraically equivalent to zero on $X/R$ (\cite{BLR} 9.2/13). Applying Theorem \ref{comp} to the curve $X_K$, we conclude that
$$\lan a,\ a'\ran_M =-[D_{K}\ ,\ D_{K}']=-\lan D_{K}\ ,\ D_{K}'\ran\quad\in\bbQ/\bbZ.$$
\end{proof}
Next, the index $d$ of $X_K$ divides $g-1$ where $g$ is the genus of $X_K$ (\cite{R} 9.5.1). Let us fix a divisor $E$ of degree $d$ on $X_K$, and consider the linear equivalence class of divisors of degree $g-1$ given by $t_K:=(g-1)d^{-1}(E)\in\Pic_{X_K/K}^{g-1}(K)$. The canonical image of the $(g-1)$-fold symmetric product $X_{K}^{(g-1)}$ in $\Pic_{X_K/K}^{g-1}$ can be translated by $t_K$ to a divisor on $J_K$, that we will denote by $\Theta$. Then, by extending $K$ and reducing to the case where $X_K(K)$ is nonempty, one sees that the canonical principal polarization of $J_K$ can be written explicitly here as $\varphi(a)=-(\Theta_a-\Theta)$, where $\Theta_a$ is obtained from $\Theta$ by translation by the section $a$. On the other hand, the formula $x\mapsto (d[x]-E)$ defines a $K$-morphism $h:X_K\ra J_K$. 
\begin{Lem} \label{reld}
The following diagram of $K$-morphisms is commutative:
\begin{displaymath}
\xymatrix{
&  J_{K}' \ar[dr]^{h^{*}} & \\
J_K   \ar[ur]^{-\varphi} \ar[rr]^{d} & & J_K.
}
\end{displaymath}
The commutativity can be stated as follows. Let $z\in J_K(\overline{K})$. Let $Z$ be any divisor of degree $0$ on $X_{\overline{K}}$, whose linear equivalence class $(Z)$ corresponds to $z$ via the canonical isomorphism $\Pic^0(X_{\overline{K}})=J_K(\overline{K})$. Then the following relation holds:
$$h^{*}(\Theta_z-\Theta)=d(Z)\in \Pic^{0}(X_{\overline{K}})=J_K(\overline{K}).$$
In particular, there is a nonempty open subset $U_K$ of $J_K$ such that $h^{*}\Theta_{z}$ is a well-defined divisor on $X_K$ for all $z\in U_K(K)$, and whose degree does not depend on the point $z$.
\end{Lem}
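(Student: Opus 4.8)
The plan is to establish the displayed identity
\[
h^{*}(\Theta_z-\Theta)=d\,(Z)\quad\text{in }\Pic^{0}(X_{\overline{K}})=J_K(\overline{K})
\]
for every $z\in J_K(\overline{K})$; the commutativity of the diagram of $K$-morphisms then follows, since two morphisms of $K$-varieties that agree on all $\overline{K}$-points coincide. One may assume $g\geq 1$, the case $g=0$ giving $J_K=0$.

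First I would factor $h$ over $\overline{K}$. Choose a point $P_0\in X(\overline{K})$ and let $\iota\colon X_{\overline{K}}\ra J_{K,\overline{K}}$, $x\mapsto\cO_X(x-P_0)$, be the Abel--Jacobi map; it is injective on points, hence, being proper, finite. Writing $[d]$ for multiplication by $d$ on $J_{K,\overline{K}}$ and $t_c$ for translation by $c:=\cO_X(dP_0-E)\in J_K(\overline{K})$ (note $\deg(dP_0-E)=0$), evaluation on a point $x$ gives $t_c\big([d](\iota(x))\big)=\cO_X(dx-E)=h(x)$, so $h_{\overline{K}}=t_c\circ[d]\circ\iota$ and hence $h^{*}=\iota^{*}\circ[d]^{*}\circ t_c^{*}$ on Picard groups. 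Then I would invoke two standard facts about an abelian variety: translations act trivially on $\Pic^{0}$, and $[d]^{*}M\cong M^{\otimes d}$ for $M\in\Pic^{0}$ (the theorem of the cube, equivalently the fact that $[d]^{*}$ on the dual is multiplication by $d$). Since $\Theta_z-\Theta=-\varphi(z)$ lies in $\Pic^{0}(J_{K,\overline{K}})$ and $-\varphi$ is a group homomorphism,
\[
h^{*}(\Theta_z-\Theta)=\iota^{*}[d]^{*}t_c^{*}(\Theta_z-\Theta)=\iota^{*}[d]^{*}(\Theta_z-\Theta)=\iota^{*}\big(d(\Theta_z-\Theta)\big)=\iota^{*}(\Theta_{dz}-\Theta).
\]
So everything reduces to the identity $\iota^{*}\circ(-\varphi)=\mathrm{id}_{J_K}$, i.e.\ $\iota^{*}(\Theta_w-\Theta)=w$ for all $w\in J_K(\overline{K})$; taking $w=dz$ then gives $h^{*}(\Theta_z-\Theta)=dz=d\,(Z)$.

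That last identity is the classical autoduality of the Jacobian: the pullback $\iota^{*}\colon\Pic^{0}(J_{K,\overline{K}})\ra\Pic^{0}(X_{\overline{K}})=J_K(\overline{K})$ along an Abel--Jacobi map is, up to sign, inverse to the principal polarization attached to a theta divisor, and the sign is precisely such that $\iota^{*}\circ(-\varphi)=\mathrm{id}$ — which is exactly why the statement uses the opposite polarization $-\varphi$ (see e.g.\ \cite{BLR}, \cite{La}, or Mumford's \emph{Abelian Varieties}). I would also point out that nothing has to be checked about a compatibility between $P_0$ and the particular divisor $\Theta$ in play (the image of $X^{(g-1)}$ in $\Pic^{g-1}$ translated by $-t_K$): changing $P_0$ alters $\iota$ by a translation, which is trivial on $\Pic^{0}$, so $\iota^{*}|_{\Pic^{0}}$ does not depend on $P_0$; and replacing $\Theta$ by a translate leaves $\Theta_a-\Theta$, hence $\varphi$, unchanged, $\varphi$ being a homomorphism. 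The main obstacle is therefore not conceptual but bookkeeping: one must pin down the sign conventions so that the opposite polarization $-\varphi$ matches the classical normalization of autoduality; once that is done the computation above is formal.

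For the last assertion: $h$ is finite and $\dim X_K=1$, so $h(X_K)$ is an irreducible curve in $J_K$. The subset $\{z\in J_K:\ h(X_K)\subseteq\Theta_z\}$ is closed, and it is a proper subset: if it were all of $J_K$, then $h(X_K)$ would lie in $\bigcap_{z\in J_K}\Theta_z$, which is empty, since for any $p\in J_K$ the set $\{p-z:z\in J_K\}$ is all of $J_K$ and hence not contained in the proper closed subset $\Theta$. Let $U_K$ be its complement, a nonempty Zariski-open subset defined over $K$. For $z\in U_K(K)$ one has $h(X_K)\not\subseteq\Theta_z$, so $h^{*}\Theta_z$ is a genuine divisor on $X_K$; and since all translates $\Theta_z$ are algebraically equivalent, the identity $h^{*}\cO_{J_K}(\Theta_z)=h^{*}\cO_{J_K}(\Theta)\otimes h^{*}\cO_{J_K}(\Theta_z-\Theta)$ shows that $h^{*}\cO_{J_K}(\Theta_z)$ differs from $h^{*}\cO_{J_K}(\Theta)$ by a degree-zero bundle on the curve $X_K$, so $\deg h^{*}\Theta_z$ is independent of $z$.
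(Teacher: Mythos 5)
Your proof is correct and follows essentially the same route as the paper: reduce to $\overline{K}$, factor $h$ through the Abel--Jacobi map with a rational base point, use that translations act trivially and $[d]^{*}$ acts as multiplication by $d$ on $\Pic^{0}$, and invoke the classical autoduality $\iota^{*}(\Theta_w-\Theta)=w$. You merely make explicit what the paper compresses into three sentences, and in addition you spell out the final assertion about $U_K$, which the paper leaves implicit.
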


\begin{proof}
To check that the diagram is commutative, one can replace $K$ by its algebraic closure, and so we can assume that $K$ is algebraically closed. As the pull-back by the multiplictaion-by-$d$ on $J_K$ acts as multiplication-by-$d$ on the group $\Pic^0(J_K)$, the lemma then follows from the classical situation where $X_K$ can be embedded into $J_K$ using a rational point of $X_K$. 
\end{proof}

\begin{proof}[Proof of Proposition \ref{dGP}]
Let $(a,a')\in\Phi_{J}\times\Phi_{J}$. Choose a point $a_K\in J_K(K)$ which specializes to $a\in\Phi_{J}$. The point $a_K$ corresponds, under the equality $J_K(K)=\Pic^{0}(X_K)$, to the linear equivalence class of a divisor $D(a)_K$ of degree $0$ on $X_K$. Write $D(a)_K=D(a)_{K}^{+}-D(a)_{K}^{-}$ with $D(a)_{K}^{+}$ and $D(a)_{K}^{-}$ positive with disjoint supports. Let $L/K$ be a finite field extension such that
$$D(a)_{K}^{+}\otimes_K L=\sum_{r=1}^{\alpha}\ [a_{r,+}]\quad\textrm{and}\quad D(a)_{K}^{-}\otimes_K L=\sum_{r=1}^{\alpha}\ [a_{r,-}]$$
where $\alpha:=\deg D(a)_{K}^{+}=\deg D(a)_{K}^{-}$ and with $a_{r,+}$, $a_{r,-}$ in $X_L(L)$ (repetitions allowed).

Next, still denoting by $U_K$ the open subset of $J_K$ provided by Lemma \ref{reld}, one can find some $a_{K}',z_K\in U_K(K)$ specializing to $a',0\in\Phi_J$, and such that
\begin{eqnarray*}
d a_K,\ 0_K&\notin&\Supp(\Theta_{a_{K}'}-\Theta_{z_K})\subseteq J_K\\
\bar{a}_{r,+},\ \bar{a}_{r,-}&\notin&\Supp((\Theta_{a_{K}'}-\Theta_{z_{K}})_L)\subseteq J_{L}\quad\forall r=1,\ldots,\alpha,
\end{eqnarray*}
where $\bar{a}_{r,+}:=h(a_{r,+})$ and $\bar{a}_{r,-}:=h(a_{r,-})$.
The points $a_{K}'$ and $z_K$ correspond to the classes of some divisors $D(a')_K$ and $D(0)_K$ on $X_K$, under the identification $J_K(K)=\Pic^0(X_K)$. From Lemma \ref{reld}, we get:
$$h^{*}(\Theta_{a_{K}'}-\Theta_{z_K})=d(D(a')_K-D(0)_K)=d(a_{K}'-z_K)$$ in $\Pic^{0}(X_K)=J_K(K)$. And by construction, the $K$-point $d(a_{K}'-z_K)$ of $J_K$ specializes to $d a'\in \Phi_{J}$. As a consequence, Lemma \ref{M} provides the formula:
\begin{eqnarray*}
\lan a\ ,\ d a'\ran_{M}&=& -\lan D(a)_K \ ,\ h^{*}(\Theta_{a_{K}'}-\Theta_{z_K})\ran_{X_K}\quad \mo\bbZ
\end{eqnarray*}
(note that $h^{*}(\Theta_{a_{K}'}-\Theta_{z_K})$ is a well-defined divisor, and not only a class, because $a_{K}',z_K\in U_K(K)$).

Still working with normalized valuations to compute N\'eron's pairing, and using functoriality, we obtain:
\begin{eqnarray*}
\lan a\ ,\ d a'\ran_{M}&=& -\frac{1}{e_L}\lan\sum_{r=1}^{\alpha}\ [\bar{a}_{r,+}]-[\bar{a}_{r,-}]\ ,\ (\Theta_{a_{K}'}-\Theta_{z_K})_L\ran_{J_L}\quad \mo\bbZ.
\end{eqnarray*}
where $e_L$ is the ramification index of $L/K$. Then we apply the reciprocity law for N\'eron's pairing with the divisorial correspondence $(\delta^{*}\Theta-p_{1}^*\Theta-p_{2}^*\Theta)_L$, where $\delta$, $p_1$ and $p_2$ : $J_K\times_K J_K\ra J_K$ are the difference map and the two projections, to get:
\begin{eqnarray*}
\lan a\ ,\ d a'\ran_{M}&=& -\frac{1}{e_L}\lan\ [a_{L}']-[z_L]\ ,\ \sum_{r=1}^{\alpha}(\Theta_L)_{\bar{a}_{r,+}}^{-}-(\Theta_L)_{\bar{a}_{r,-}}^{-}\ran_{J_L}\quad \mo\bbZ.
\end{eqnarray*}
Here $(\Theta_L)^-$ stands for $[-1]^*(\Theta_L)$.

Now, with Notation \ref{Not2}, the divisor $\sum_{r=1}^{\alpha}(\Theta_L)_{\bar{a}_{r,+}}^{-}-(\Theta_L)_{\bar{a}_{r,-}}^{-}$ is the pull-back on $J_L$ of the divsior $(\Theta^{-})_{h_*D(a)}$ defined on $J_K$. On the other hand,
\begin{eqnarray*}
\sum_{r=1}^{\alpha}\bar {a}_{r,+}-\bar{a}_{r,-}&=&\sum_{r=1}^{\alpha}(d [a_{r,+}]-E_L)-(d [a_{r,-}]-E_L)\\
                                       &=&d(D(a)_L)\in J_K(L)\\
                                       &=&da_K\in J_K(K).
\end{eqnarray*}
Therefore the theorem of the square on $J_L$ shows that the two divisors $(\Theta^{-})_{h_*D(a)}$ and $\Theta_{d a_K}^{-}-\Theta^{-}$ on $J_K$ are linearly equivalent over $L$, hence also over $K$ ($J_{K}'(K)$ injects into $J_{L}'(L)$). From this observation, and the fact that the normalized valuation on $K$ takes values in $\bbZ$, we deduce that
\begin{eqnarray*}
\lan a\ ,\ d a'\ran_{M}&=&-\lan\ [a_{K}']-[z_K]\ ,\ \Theta_{d a_{K}}^{-}-\Theta^{-}\ran_{J_K}\quad \mo\bbZ.
\end{eqnarray*}
Applying once more the reciprocity law, we find
\begin{eqnarray*}
\lan a\ ,\ d a'\ran_{M}&=&-\lan\ [d a_{K}]-[0_K]\ ,\ \Theta_{a_{K}'}-\Theta_{z_K} \ran\quad \mo\bbZ.
\end{eqnarray*}
Finally, note that $(\Theta_{a_{K}'}-\Theta_{z_K})=-\varphi(a_{K}'-z_K)\in J'(K)$
and $a_{K}'-z_K$ specializes to $a'\in\Phi_{J}$. Consequently, if we use $-\varphi$ to identify $J_K$ with its dual, Theorem \ref{BL} tells us that
\begin{eqnarray*}
-\lan\ [d a_K]-[0_K]\ ,\ \Theta_{a_{K}'}-\Theta_{z_K} \ran&=&\lan d a\ ,\ a'\ran\quad \mo\bbZ.
\end{eqnarray*}
Whence
\begin{eqnarray*}
\lan a\ ,\ d a'\ran_{M}&=&\lan d a\ ,\ a'\ran,
\end{eqnarray*}
as claimed.
\end{proof}

{\small

}

\end{document}